\newtheorem{teorema}{Theorem}[section]
\newtheorem{cor}[teorema]{Corollary}
\newtheorem{lem}[teorema]{Lemma}
\newtheorem{prop}[teorema]{Proposition}
\theoremstyle{definition}
\newtheorem{defin}[teorema]{Definition}
\newtheorem{re}[teorema]{Remark}
\newtheorem{Exs}[teorema]{Examples}
\newcommand{\ff}{\mathfrak{f}}
\newcommand{\fg}{\mathfrak{g}}
\newcommand{\fx}{\mathfrak{x}}
\DeclareMathOperator{\downc}{\downarrow\!}
\newcommand{\kmodto}{\mathrel{\mathmakebox[\widthof{$\xrightarrow{\rule{1.45ex}{0ex}}$}]{\xrightharpoonup{\rule{1.45ex}{0ex}}\hspace*{-2.8ex}{\circ}\hspace*{1ex}}}}
\newcommand{\Multi}[1]{(L,{#1})\mbox{-} \mathtt{Cat}}
\newcommand{\Pro}{V \mbox{-} \mathtt{Pro}}
\newcommand{\Pow}[1]{ \mathtt{P}_{{#1} }}
\newcommand{\Vcat}{V \mbox{-} \mathtt{Cat}}
\newcommand{\Dist}[1]{{#1} \mbox{-}\mathtt{Dist}}
\newcommand{\Mat}[1]{ {#1} \mbox{-}\mathtt{Rel}}
\newcommand{\Alg}[1]{\mathtt{Set}^{P_{#1}}}
\newcommand{\Tcolim}[1]{ {#1}\mbox{-} \mathtt{colim}}
\newcommand{\limit}{\mathtt{lim}}
\newcommand{\colim}{\mathtt{colim}}
\newcommand{\limp}{\lhd}
\newcommand{\Yo}[1]{\mathbf{y}_{#1}}
\newcommand{\CoYo}[1]{\mathbf{\lambda}_{#1}}
\newcommand{\rimp}{\rhd}
\newcommand{\mult}{\ast}
\newcommand{\op}{\mathtt{op}}
\newcommand{\Sets}{\mathtt{Set}}
\newcommand{\Id}{\mathtt{Id}}
\newcommand\setItemnumber[1]{\setcounter{enumi}{\numexpr#1-1\relax}}
\def\slashedarrowfill@#1#2#3#4#5{%
	$\m@th\thickmuskip0mu\medmuskip\thickmuskip\thinmuskip\thickmuskip
	\relax#5#1\mkern-7mu%
	\cleaders\hbox{$#5\mkern-2mu#2\mkern-2mu$}\hfill
	\mathclap{#3}\mathclap{#2}%
	\cleaders\hbox{$#5\mkern-2mu#2\mkern-2mu$}\hfill
	\mkern-7mu#4$%
}
\def\rightslashedarrowfill@{%
	\slashedarrowfill@\relbar\relbar\mapstochar\rightarrow}
\newcommand\xslashedrightarrow[2][]{%
	\ext@arrow 0055{\rightslashedarrowfill@}{#1}{#2}}
\newcommand\restr[2]{{
		\left.\kern-\nulldelimiterspace 
		#1 
		\vphantom{\big|} 
		\right|_{#2} 
}}
\newcommand{\scomment}[1]{\ \ifhmode\todo{#1}\else\vadjust{\todo{#1}}\fi}
\tikzset{%
	symbol/.style={%
		draw=none,
		every to/.append style={%
			edge node={node [sloped, allow upside down, auto=false]{$#1$}}}
	}
}
\title{Injective Hulls of Quantale-Enriched Multicategories}
\author{Eros Martinelli}
\providecommand{\keywords}[1]{\textbf{\textit{Keywords:}} #1}
\affil{Center for Research and Development in Mathematics and
Applications,\\
Department of Mathematics,\\
University of Aveiro,\\
Portugal.\\
\textit{E-mail adress}: \texttt{eros.martinelli@ua.pt}}
\date{}
\begin{document}

\maketitle
\begin{abstract}
	In this communication we generalize some recent results of Rump to categories
	enriched in a commutative quantale $V$. Using these results, we show that every
	quantale-enriched multicategory admits an injective hull. Finally, we expose a
	connection between the Isbell adjunction and the construction of injective hulls
	for topological spaces made by Banaschewski in $1973$.
\end{abstract}
\keywords{Quantales, Quantale-Enriched Categories, Injective Objects, Injective Hulls, Quantale-Enriched Multicategories.}
\medskip

\section{Introduction}
In their article \cite{InjHulls}, Lambek et al. studied injective objects and injective hulls in the category of ordered monoids with sub-multiplicative monotone maps. They proved that quantales are the injective objects in this category and that every ordered monoid admits an injective hull which is obtained as the set of fixed points of a quantic nucleus on the quantale of down-closed subsets of the given ordered monoid.\par\medskip
Rump, in his article \cite{2016THECO}, studied further the injectivity problem by considering quantum B-algebras: ordered sets equipped with two binary operations that mimic the residuals in a quantale. He proved that quantales are the injective objects in the category whose objects are quantum B-algebras and whose morphisms are oplax homomorphisms between them. Moreover, he proved that every quantum B-algebra admits an injective hull.\par\medskip
In this paper we show how these two results can be put under a common roof and generalized to the realm of enriched categories; $V$-multicategories, for a quantale $V$, provide this common roof. In this way, both ordered monoids with sub-multiplicative monotone maps and quantum B-algebras with oplax homomorphisms become full subcategories of $\Multi{V}$: the category whose objects are $V$-multicategories and whose morphisms are $V$-multifunctors, which are able to capture both the sub-multiplicativity and the oplaxness.
We also notice that $V$-multicategories are particular examples of $(T,V)$-categories (see \cite{hofmann_seal_tholen_2014}) where the monad $T$ is specialized to the list monad $L$. For these categories, from now on called $(L,V)$-categories, Clementino, Hofmann and Tholen showed how it is possible to develop many constructions that come from enriched category theory in the more general context of $(T,V)$-categories (see \cite{HOFMANN2011283, CH09a, Hof14}). In particular, in \cite{HOFMANN2011283}, Hofmann showed, by generalizing some results about topological spaces (see \cite{Esc97,Esc98}), how algebras for a Kock-Zöberlein monad, which generalizes the presheaf monad, characterize injective objects in the category of all $(T,V)$-categories.\par\medskip
By using this characterization of injective objects as algebras for a monad, by pure categorical arguments, we generalize Theorem $4.1$ of \cite{InjHulls} and prove that quantales (in fact, their enriched counterparts) are also the injective objects in $\Multi{V}$. Since quantales can be seen both as ordered monoids and as quantum B-algebras, this allows us to recover the characterization of injectives contained in \cite{InjHulls} and in \cite{2016THECO}.\par\medskip
Regarding the problem of the construction of injective hulls, using ideas from \cite{2016THECO}, we prove that, for a full subcategory $C$ of $\Multi{V}$, every object of $C$ admits an injective hull if and only if the collection of objects of $C$ contains all quantales.\par\medskip

We would like to stress the fact that the results of this paper are not a mere exercise in \textit{fuzzyfication}, that is to say, a rephrase of the ones contained in \cite{2016THECO} for a general $V$. In order to be able to generalize them, one needs a more general theory of colimits, which is provided by the notion of $(L,V)$-colimits which we are going to introduce. This provides a nice and elegant categorical way to shed light on some universal constructions that in the ordered case are somehow "hidden".\par\medskip

The structure of the paper is as follows:
\begin{itemize}
	\item In the first section we introduce some background material on $V$-categories and some "classical" constructions: distributors, (co)limits, and adjunctions.
	\item In the second section we introduce $(L,V)$-categories. We show how many constructions that come from enriched category theory (distributors, colimits, colimit completions etc.) can be developed in the more general context of $(L,V)$-categories.
	\item In the third section we define $V$-quantales, the enriched counterparts of quantales, state some of their properties and introduce lax monoidal monads.
	\item The fourth section is, as the title suggests, an \textit{intermezzo}. We recall how injectives and injective hulls are built in the category of ordered sets and monotone maps. Then we show how the same ideas apply to the enriched case, for a general commutative quantale $V$. The aim of the section is to introduce some ideas that will be applied later in the more general context of $(L,V)$-categories.
	\item The fifth section forms the core of the paper. We introduce promonoidal categories and we explain their relation with $(L,V)$-categories. We introduce enriched quantum B-algebras as representable promonoidal categories. We prove that $V$-quantales are precisely the injectives in $\Multi{V}$. Using some ideas from the previous sections we prove that every enriched quantum B-algebra admits an injective hull.
	\item In the sixth section we characterize those subcategories of $\Multi{V}$ for which injective hulls exist. Using ideas from the previous section we prove that, for a full subcategory $C$ of $\Multi{V}$, every object of $C$ admits an injective hull if and only if the collection of objects of $C$ contains all $V$-quantales.
	\item The final section sketches a connection between the Isbell adjunction in the ordered case and the construction, exposed in \cite{Ban73}, of injective hulls of topological spaces.
\end{itemize}

\section{Preliminaries on Quantale-Enriched Categories}
Enriched categories are a generalization of ordinary categories in which \textit{hom-sets} take values in a \textit{cosmos}, a symmetric-closed monoidal category $V$. The standard reference about them is \cite{ECT}.\par\medskip

In this document we are going to consider only the case in which $V$ is a \textit{commutative quantale}, a commutative monoid in the monoidal category of sup-lattices and suprema preserving monotone maps. Although this might be seen as not so useful, Lawvere, in his seminal paper \cite{MSCC}, showed how taking enrichment in a commutative quantale is not only useful as a toy model in which some general constructions become easier to understand (due to the absence of coherence conditions), but how quantale-enriched categories are worthy of being studied on their own, since they are able to capture important mathematical structures like metric spaces.\par\medskip

In the following section we recall/introduce some basic notions of $V$-categories. Our point of view is slightly different from the more "standard" one contained in \cite{ECT}, it is more "relational": following \cite{BETTI1983109, CLEMENTINO200313}, we introduce the \textit{quantaloid} of $V$-relations and we define $V$-categories starting from there. This might be seen as an overkill, but it will be clear in the section related to $(L,V)$-categories how this approach allows us to smoothly introduce some concepts (as distributors, presheaves and colimits) also in the $(L,V)$-case.
\subsection{V-Categories and V-Functors}
\begin{defin}
	A  commutative quantale $(V, \otimes, k)$ is a complete lattice endowed with a (commutative) multiplication $\otimes : V \times V \rightarrow V$ that preserves suprema in each variable and for which $k\in V$ is the neutral element. If $k \neq \perp,$ we call $V$ non-trivial.
\end{defin}
\begin{re}
	In this paper we assume---unless explicitly stated---quantales in which we take the enrichment to be non-trivial.
\end{re}
\begin{defin}
	Let $(V, \otimes_V, k_V)$ and $(Q, \otimes_Q, k_Q)$ be commutative quantales. A \textit{lax morphism of quantales} is a monotone map $f : V \rightarrow Q$ such that, for all $v,w \in V$,
	$$ f (v) \otimes_Q f(w) \leq f(v \otimes_V w), \ \ k_Q \leq f(k_V).$$
	If $f$ preserves suprema and if, for all $v,w \in V$,
	$$ f (v) \otimes_Q f(w) = f(v \otimes_V w), \ \ k_Q  = f(k_V),$$
	then $f$ is called \textit{strong}.
\end{defin}
\begin{re}
	By the adjoint functor theorem applied to ordered sets, it follows that $- \otimes =$ admits a right adjoint (in each variable) denoted by $[-,=]$ and called "internal hom".
\end{re}
\begin{Exs}
	\begin{enumerate}
		\item The two-element boolean algebra $\mathbf{2} = \{0,1\}$ with $\wedge$ as multiplication and $\Rightarrow$ as internal hom is a quantale.
		\item More generally, every frame becomes a quantale with the multiplication given by $\wedge$. In particular $k = \top.$
		\item $[0, \infty]^{\op}$ (with the opposite of the natural order) with $+$ as multiplication is a quantale. The internal hom is given by "truncated minus" defined as $ [u,v] = v\ominus u = \mathtt{max}(v-u,0)$.
		\item Consider the set
		$$\Delta = \{\psi :[0, \infty] \rightarrow [0,1] \mbox{ $\mid$ } \mbox{for all } \alpha \in [0, \infty] \mbox{ : } \psi(\alpha) = \bigvee_{\beta < \alpha } \psi(\beta)\}$$
		of distribution functions; with the pointwise order it becomes a complete ordered set. For all $\psi, \phi \in \Delta$ and $\alpha \in [0, \infty]$, define the following multiplication:
		$$ \psi \otimes \phi (\alpha) = \bigvee_{\beta + \gamma < \alpha } \psi(\beta )\mult \phi(\gamma),$$
		where $\mult$ is the ordinary multiplication on $[0,1]$.
		It is shown in \cite{De} that $(\Delta, \otimes, k)$ is a quantale, where $k(0) =0$ and, for all $\alpha > 0$, $k(\alpha) = 1$.
\end{enumerate}
\end{Exs}
\begin{Exs}Let $(V, \otimes, k)$ be a commutative quantale.
	\label{QMorph}
	\begin{enumerate}
			\item \label{QuantMorph3} The function $f : \mathbf{2} \rightarrow V$, where
			$$
			f(v) =
			\begin{cases}
			\perp &\mbox{if } v = 0, \\
			k &\mbox{if } v = 1,
			\end{cases}
			$$
			defines a strong morphism of quantales.
			\item \label{QuantMorph2} The function $h : V \rightarrow \mathbf{2}$, where
			$$
			h(v) =
			\begin{cases}
			1  &\mbox{if } v  \neq \perp, \\
			0 &\mbox{if } v = \perp,
			\end{cases}
			$$
			in general does not define a morphism of quantales. The function $h$ defines a morphism of quantales iff $v \otimes u = \perp$ implies that $v = \perp$ or $u = \perp $, for all $u,v \in V$. We notice that the quantale $[0, \infty]^{\op}$ satisfies this condition.
			\item \label{QuantMorph} The function $g: V \rightarrow \mathbf{2}$, where
			$$
			g(v) =
			\begin{cases}
		 		1 &\mbox{if } k \leq v, \\
			k &\mbox{otherwise, }
			\end{cases}
			$$
			is a lax morphism of quantales.
	\end{enumerate}
\end{Exs}
As we stated in the introduction of this section, we are going to present $V$-categories from a more "relational" point of view. The first step is to define the so-called \textit{quantaloid of $V$-relations}, which is the enriched generalization of the category $\mathtt{Rel}$ of (ordinary) binary relations. For an account on quantaloids we refer to \cite{STUBBE201495} for a brief overview and to \cite{DF} for a more in depth description.\par\medskip

The quantaloid $\Mat{V}$ is the order-enriched category whose objects are sets, and an arrow $r : X \xslashedrightarrow{} Y$ is given by a function
$$ r : X \times Y \rightarrow V.$$
The composition of $r : X \xslashedrightarrow{} Y $, $s : Y \xslashedrightarrow{} Z$ is given by "matrix multiplication" and is defined pointwise as
$$ s \cdot r (x,z) = \bigvee_{y \in Y} r(x,y) \otimes s(y,z).$$
The identity arrow $ \Id : X \xslashedrightarrow{} X$ is defined as
$$
\Id(x_1,x_2) =
\begin{cases}
k &\mbox{if } x_1 = x_2, \\
\perp &\mbox{if } x_1 \neq x_2.
\end{cases}
$$
The complete order on $\Mat{V}(X,Y)$ is the one induced (pointwise) by $V$, i.e.,
\begin{equation}
\label{Ord}
 r \leq r' \mbox{ in } \Mat{V}(X,Y) \mbox{ iff } r(x,y) \leq r'(x,y) \mbox{ in } V \mbox{ for all } x,y \in X,Y.
\end{equation}
\begin{re}
	Notice that $\Mat{V}(X,Y)$ is complete because $V$ is so. Since the multiplication of $V$ preserves suprema in both variables and because suprema commute with suprema, one has
	$$(\bigvee_i r_i) \cdot (\bigvee_j s_j) = \bigvee_{i,j} r_i \cdot s_j.$$
	This proves that $\Mat{V}$ is a quantaloid.
 \end{re}
\begin{re}
	Notice that for $V = \mathbf{2}$, $\Mat{\mathbf{2}}$ is the quantaloid of relations, and the "matrix multiplication" defined previously becomes the "classical" relational composition.
\end{re}
\begin{re}
	Note that \eqref{Ord} is equivalent to
	$$ k \leq \bigwedge_{x,y \in X,Y} [r(x,y), r'(x,y)];$$
	indeed, consider $x,y \in X,Y$, then we have
	$$ r(x,y) \leq r'(x,y) \mbox{ iff }  k \otimes r(x,y) \leq r'(x,y) \mbox{ iff } k \leq [r(x,y), r'(x,y)].$$
\end{re}
\begin{re}\label{ChangeOfBase}
Let $ f: V \rightarrow Q$ be a lax morphism between two quantales $(V, \otimes_V, k_V)$ and $(Q,\otimes_Q,k_Q)$. Then $f$ induces a lax functor
$$ F : \Mat{V} \rightarrow \Mat{Q},$$
which is the identity on objects and which sends a $V$-relation $r : X \xslashedrightarrow{} Y$ to the $Q$-relation
$$F(r) :  X \times Y \xrightarrow{r} V \xrightarrow{f} Q.$$
It is easy to prove that $F$ is lax; indeed, one has (for suitable $V$-relations $r,s$)
\begin{alignat*}{2}
 F(r) \cdot F(s)(x,z) &= \bigvee_{y \in Y} F(r)(x,y) \otimes_Q F(s)(y,z)  \\
       &\leq \bigvee_{y \in Y} f( r(x,y) \otimes_V s(y,z) ) \\
			 & \leq F(r \cdot s)(x,z).
\end{alignat*}
Notice that if $f$ is a strong morphism between quantales then $F$ is strong too.
\end{re}
\begin{re}
	Notice that every function $f : X \rightarrow Y$ can be seen as a $V$-relation as follows:
	$$
	f(x,y) =
	\begin{cases}
	k &\mbox{if } f(x) =y, \\
	\perp &\mbox{if } f(x)\neq y.
	\end{cases}
	$$
	The identity in $\Mat{V}(X,X)$ is an example of this construction.
\end{re}

We have also an involution $(-)^{\circ} : \Mat{V}^{\op} \rightarrow \Mat{V}$ defined as $r^{\circ}(y,x) = r(x,y)$ which satisfies
$$ (1_X)^{\circ} = 1_X, \ \ (s \cdot r )^{\circ} = r^{\circ} \cdot s^{\circ}, \ \ (r^{\circ})^{\circ} = r.$$
\begin{defin}
	A $V$-category is a pair $(X,a)$, where $X$ is a set and $ a : X \xslashedrightarrow{} X$ is a $V$-relation that satisfies
	\begin{itemize}
		\item $\Id \leq a;$
		\item  $a \cdot a \leq a$.
	\end{itemize}
\end{defin}
\begin{re}
	In this paper, when the $V$-structure is clear from the context, we will denote a $V$-category $(X,a)$ simply as $X$.
\end{re}
\begin{defin}
	Given two $V$-categories $(X,a), (Y,b)$, a $V$-\textit{functor} $f : (X,a) \rightarrow (Y,b)$ is a function between the underlying sets such that
	$$a \leq f^{\circ} \cdot b \cdot f$$
	which, in pointwise terms, means that, for all $x,y \in X,$
	$$ a(x,y) \leq b(f(x), f(y)).$$
	If the equality holds, we call $f$ \textit{fully faithful}.
\end{defin}
\begin{Exs}

	\begin{enumerate}
		\item For $V = \mathbf{2}$, as we already stated before, a $\mathbf{2}$-category is an ordered set and a $\mathbf{2}$-functor is a monotone map. Notice that the order relation of a $\mathbf{2}$-category $(X, \leq_X)$ does not need to be antisymmetric.
		\item Categories enriched in the quantale $[0, \infty]^{\op}$, as first recognized by Lawvere in \cite{MSCC}, are generalized metric spaces and $[0, \infty]^{\op}$-functors between them are non-expansive maps.
		\item Categories enriched in $\Delta$ are called probabilistic metric spaces, as first recognized in \cite{De}.
		\item The quantale $V$ defines a $V$-category with the $V$-structure given by its internal hom $[-, =]$.
		\item By using the involution $(-)^{\circ} : \Mat{V}^{\op} \rightarrow \Mat{V}$, for every $V$-category $(X,a)$, one can define its opposite category $X^{\op} = (X, a^{\circ}).$
		\item Given two $V$-categories $(X,a), (Y,b)$, one can define the $V$-category formed by all $V$-functors $f : (X,a) \rightarrow (Y,b)$, denoted by $([X,Y],[X,Y](-,=) )$, by defining the following $V$-structure:
		$$[X,Y](f,g) = \bigwedge_{x\in X} b(f(x),g(x)).$$
		In particular we have two very important $V$-categories:
		$$\mathbb{D}(X)= [X^{\op},V], \mbox{ the category of presheaves},$$
		$$\mathbb{U}(X) = [X,V]^{\op}, \mbox{ the category of co-presheaves.}$$
		Notice that they are generalizations (for a general $V$) of the classical down(up)-closed subsets construction which corresponds to the case in which $V= \mathbf{2}$.
		\item Given a $V$-category $(X,a)$, there are two $V$-functors, called the Yoneda embedding and the co-Yoneda embedding:
		$$ \Yo{X} : X \rightarrow \mathbb{D}(X), \ \ x \mapsto a(-,x),$$
		$$ \CoYo{X} : X \rightarrow \mathbb{U}(X), \ \ x \mapsto a(x,=).$$
		Moreover, one can prove that
		$$ \mathbb{U}(X)[\CoYo{X}(x),g] = g(x), \ \ \mathbb{D}(X)[\Yo{X}(x),g] = g(x).$$
		The last two results are known as the co-Yoneda lemma and Yoneda lemma, respectively. Notice that, for a general $X$, $\Yo{X}$ and $\CoYo{X}$ are not injective functions. They are injective iff $X$ is separated (see \cite[Proposition~1.5]{HT10}).
		\item Given two $V$-categories $(X,a), (Y,b)$, we can define their tensor product
		$$ X \boxtimes Y = (X\times Y, a \otimes b).$$
		In particular, one has: $X \boxtimes K \simeq X$ where $K$ denotes the one-point $V$-category $(1,k)$.\par\medskip
		In the case in which $V = \mathbf{2}$, the ordered structure on $X \boxtimes Y$ is the product order. This means that $(x_1,y_1) \leq_{X \boxtimes Y} (x_2, y_2)$ if and only if $x_1 \leq_X x_2$ and $y_1 \leq_Y y_2$.\par\medskip
		In the case in which $V= [0, \infty]^{\op}$, the metric structure on $X \boxtimes Y$ is the taxicab metric, which is defined as:
		$$ d_{X \boxtimes Y }((x_1,y_1),(x_2, y_2) ) = d_X(x_1, x_2) + d_Y(y_1,y_2).$$
	\end{enumerate}

\end{Exs}

In this way we can define $\Vcat$ as the category whose objects are $V$-categories and whose arrows are $V$-functors; moreover, $\Vcat$ becomes an order-enriched category, if we define, for two $V$-functors $f,g : (X,a) \rightarrow (Y,b)$,
$$ f \leq g \mbox{  iff }  k \leq \bigwedge_{x\in X} b(f(x),g(x)).$$
With the tensor product previously defined, $\Vcat$ becomes a closed monoidal category, since one can show that, for three $V$-categories $(X,a), (Y,b),(Z,c)$, one has
$$\Vcat(X \boxtimes Y,Z) \simeq \Vcat(X, [Y,Z]) \simeq \Vcat(Y, [X,Z]).$$
We have that $\Vcat$, with the product defined before, becomes a monoidal category. This allows us to define monoids with respect to such product, which we call monoidal $V$-categories.
\begin{defin}
A monoidal $V$-category $(X,a, \mult, u_X)$ is a $V$-category $(X,a)$ equipped with two $V$-functors: $\mult : X \boxtimes X \rightarrow X$ and $ u_X : K \rightarrow X$, such that $(X,a, \mult, u_X)$ is a monoid (with respect to the monoidal structure $(\boxtimes, K)$).
\end{defin}
\begin{re}
Notice that $K = (1,k)$ is a separator in $\Vcat$. This means that, for all pairs of parallel $V$-functors $f,g : X \rightarrow Y$ in $\Vcat$, if $f z = g z$ for every $V$-functor $z : K \rightarrow X$, then $f = g$.
\end{re}
\begin{re}
	Notice that, when $V=\mathbf{2}$, a monoidal ordered set is just an ordered monoid. That is to say a monoid endowed with an order relation which is compatible with the monoid structure.\\
	When $V = [0, \infty]^{\op}$, a monoid in $ [0, \infty]^{\op} \mbox{-} \mathtt{Cat}$ is a metric space endowed with a (compatible with respect to the metric) monoid structure on its underlying set. Examples of monoidal metric spaces are the underlying additive groups of normed vector spaces.
\end{re}
\begin{re}\label{ChangeOfBase2}
	Let $ f: V \rightarrow Q$ be a lax morphism between two quantales $(V, \otimes_V, k_V), (Q,\otimes_Q,k_Q)$. In Remark \ref{ChangeOfBase} we showed that $f$ induces a lax functor $F : \Mat{V} \rightarrow \Mat{Q}.$ It is possible to show that $F$ allows us to define a $2$-functor
	$$\overline{F} : \Vcat \rightarrow Q \mbox{-} \mathtt{Cat},$$
	which sends a $V$-category $(X,a)$ to the $Q$-category $(X, F(a))$ and a $V$-functor $f : (X,a) \rightarrow (Y,b)$ to the $Q$-functor $\overline{F}(f) : (X, F(a)) \rightarrow (Y,F(b))$ that has the same underlying function as $f$.\\
	In particular, if we consider the lax morphism of quantales $g : V \rightarrow \mathbf{2}$, introduced in Example \ref{QuantMorph} of Examples \ref{QMorph}, we get a $2$-functor
	$$\Vcat \rightarrow \mathtt{Ord}$$
	which sends a $V$-category $(X,a)$ to the ordered set $(X,\leq_a)$, where, for $x,y \in X$,
	$$ x \leq_a y \mbox{ iff } k \leq a(x,y).$$
	Moreover, in the case in which $V = [0, \infty]^{\op} $, we have that
	$$ [0, \infty]^{\op} \mbox{-} \mathtt{Cat} \rightarrow \mathtt{Ord}$$
	preserves the monoidal structure. This means that, for all $[0, \infty]^{\op}$-categories $(X, d_X)$ and $(Y,d_Y)$, the underlying ordered set of $(X\boxtimes Y, d_{X\boxtimes Y} )$ is $(X\boxtimes Y, \leq_{X\boxtimes Y} )$. Indeed, one has
	\begin{alignat*}{2}
	d_X(x_1, x_2) + d_Y(y_1,y_2) \leq 0 & \mbox{ iff } d_X(x_1, x_2) = 0 \mbox{ and } d_Y(y_1,y_2)= 0 \\
	& \mbox{ iff } x_1 \leq_{d_X} x_2 \mbox{ and } y_1 \leq_{d_Y} y_2 \\
	& \mbox{ iff } (x_1,y_1) \leq_{X\boxtimes Y} (x_2, y_2).
	\end{alignat*}
	In the same way, if we consider a quantale $V$ such that $h: V \rightarrow \mathbf{2}$---introduced in Example \ref{QuantMorph2} of Examples \ref{QMorph}---is a strong morphism of quantales, we get another $2$-functor
	$$\Vcat \rightarrow \mathtt{Ord}$$
	which sends a $V$-category $(X,a)$ to the ordered set $(X,\leq_{h(a)})$, where, for $x,y \in X$,
	$$ x \leq_{h(a)} y \mbox{ iff }  a(x,y) \neq \perp.$$
	Moreover, in the case in which $V = [0, \infty]^{\op} $, as before, we have that
	$$ [0, \infty]^{\op} \mbox{-} \mathtt{Cat} \rightarrow \mathtt{Ord}$$
	preserves the monoidal structure. Indeed one has
	\begin{alignat*}{2}
	d_X(x_1, x_2) + d_Y(y_1,y_2) \neq  \perp & \mbox{ iff } d_X(x_1, x_2) \neq  \perp \mbox{ and } d_Y(y_1,y_2) \neq  \perp\\
	& \mbox{ iff } x_1 \leq_{h(d_Y)} x_2 \mbox{ and } y_1 \leq_{h(d_X)} y_2 \\
	& \mbox{ iff } (x_1,y_1) \leq_{h(d_{X \boxtimes Y})} (x_2, y_2).
	\end{alignat*}
\end{re}
\subsection{Distributors and Weighted (co)Limits}
Bénabou introduced distributors in \cite{LD} and since then they played an important role in category theory. They can be seen as generalizations of ideal relations from order theory, that is to say, subsets of the cartesian product of two ordered sets $X, Y$ which are downward closed in $X$ and upward closed in $Y$.\par\medskip

Weighted (co)limits encompass the classical notion of (co)limits coming from "ordinary" category theory, by admitting a "weight" given by a distributor. In the case of ordered sets, due to the fact that every distributor is a (particular) subset, the weight "disappears" and becomes the "set of conditions" over which suprema/infima are taken.
\begin{defin}
Given two $V$-categories $(X,a), (Y,b)$, a $V$-\textit{distributor} (or simply a distributor) $j  : (X,a) \xslashedrightarrow{} (Y,b)$ is a $V$-relation between the underlying sets such that:
\begin{itemize}
	\item $j \cdot a \leq j;$
	\item $b \cdot j \leq j.$
\end{itemize}
\end{defin}
Since the composite of two distributors is again a distributor, we can define the quantaloid $\Dist{V}$ in the same way as we defined $ \Mat{V}.$ In $\Dist{V}(X,X)$ the $V$-structure $a$ plays the role of the identity, since for every distributor $j : X \xslashedrightarrow{} Y$, one has
$$ b \cdot j = j \cdot a = j.$$
\begin{re}\label{ChangeOfBase3}
	Let $ f: V \rightarrow Q$ be a lax morphism between two quantales $(V, \otimes_V, k_V)$ and $(Q,\otimes_Q,k_Q)$. In Remark \ref{ChangeOfBase} we showed that $f$ induces a lax functor
	$$ F : \Mat{V} \rightarrow \Mat{Q}.$$
	It is easy to show that $F$ sends $V$-distributors to $Q$-distributors and thus that it induces a lax functor
	$$ \Dist{V} \rightarrow \Dist{Q}.$$
	Indeed, let $j : X \xslashedrightarrow{} Y$ be a $V$-distributor. Then, for all $x \in X$, $y\in Y$,
	\begin{alignat*}{2}
	F(j) \cdot F(a)(x,y) &= \bigvee_{z \in X} F(a)(x,z) \otimes_Q F(j)(z,y) \\
	& \leq \bigvee_{z \in X} F(a(x,z) \otimes_V j(z,y)) \\
	& \leq F( \bigvee_{z \in X} a(x,z) \otimes_V j(z,y)) \\
	& = F (j \cdot a)(x,y) \\
	& \leq F(j)(x,y).
	\end{alignat*}
	In the same way one can prove that $F(b) \cdot F(j) \leq F(j).$\\
	Notice that, if we have a strong morphism of quantales, then $ \Dist{V} \rightarrow \Dist{Q}$ is strong too.
\end{re}
\begin{re}
	By juggling with the definition of distributor, one can show that distributors between two $V$-categories $(X,a), (Y,b)$ are in bijective correspondence with $V$-functors between $X^{\op} \boxtimes Y$ and $V$. It is easy to prove that this correspondence is functorial and that it gives an equivalence of ordered sets
	$$\Dist{V}(X,Y) \simeq \Vcat(X^{\op} \boxtimes Y,V) \simeq \Vcat(Y,\mathbb{D}(X)).$$
	In particular, to every $V$-distributor $j : X \xslashedrightarrow{} Y$ we can associate its \textit{mate}
	$$ \ulcorner j \urcorner : Y \rightarrow \mathbb{D}(X), \ \ y \mapsto j(-,y).$$
\end{re}
Given a $V$-functor $f : (X,a) \rightarrow (Y,b)$, we can define two arrows in $\Mat{V}$:
\begin{itemize}
	\item $f_{*} : X \xslashedrightarrow{} Y,  \ \ f_{*}(x,y) = b(f(x),y);$
	\item  $f^{*} : Y \xslashedrightarrow{} X, \ \ f^{*}(y,x) = b(y,f(x)).$
\end{itemize}
One has:
\begin{lem}
 The $V$-relations $f_{*}$ and $f^{*}$ are both distributors, moreover, $f_{*} \dashv f^{*}$ in $\Dist{V}.$
\end{lem}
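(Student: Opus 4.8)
The plan is to reduce everything to relational algebra in $\Mat{V}$, exploiting the fact (from the remark that every function can be seen as a $V$-relation) that $f$ itself is a $V$-relation $f : X \xslashedrightarrow{} Y$. First I would record the two identities $f_{*} = b \cdot f$ and $f^{*} = f^{\circ} \cdot b$: unwinding the matrix multiplication gives $(b \cdot f)(x,y) = \bigvee_{y'} f(x,y') \otimes b(y',y) = b(f(x),y) = f_{*}(x,y)$, and dually $(f^{\circ} \cdot b)(y,x) = b(y,f(x)) = f^{*}(y,x)$. This rewriting turns the whole statement into the manipulation of a handful of basic inequalities rather than a chase of pointwise suprema.

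The three facts I would isolate up front are: (i) the adjunction $f \dashv f^{\circ}$ in $\Mat{V}$, i.e. $\Id_X \leq f^{\circ} \cdot f$ and $f \cdot f^{\circ} \leq \Id_Y$, which is a short check from the definition of $f$ as a relation; (ii) reflexivity and transitivity of $b$, which combine to $b \cdot b = b$, since $\Id \leq b$ forces $b \leq b \cdot b$ while transitivity gives the reverse inequality; and (iii) the two equivalent forms of the $V$-functor condition. Starting from $a \leq f^{\circ} \cdot b \cdot f$, composing on the left with $f$ and using $f \cdot f^{\circ} \leq \Id_Y$ yields $f \cdot a \leq b \cdot f$, while composing on the right with $f^{\circ}$ and using the same counit yields $a \cdot f^{\circ} \leq f^{\circ} \cdot b$.

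With these in hand the distributor conditions become one-line computations. For $f_{*} = b \cdot f$: $f_{*} \cdot a = b \cdot (f \cdot a) \leq b \cdot (b \cdot f) = (b \cdot b) \cdot f = f_{*}$ and $b \cdot f_{*} = (b \cdot b) \cdot f = f_{*}$. For $f^{*} = f^{\circ} \cdot b$, whose domain is $(Y,b)$ and codomain $(X,a)$, I would check $f^{*} \cdot b = f^{\circ} \cdot (b \cdot b) = f^{*}$ and $a \cdot f^{*} = (a \cdot f^{\circ}) \cdot b \leq (f^{\circ} \cdot b) \cdot b = f^{*}$, where the last step uses the dual form from (iii).

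For the adjunction in $\Dist{V}$, recall that the identity on $(X,a)$ there is $a$ itself. The unit $a \leq f^{*} \cdot f_{*}$ is immediate, since $f^{*} \cdot f_{*} = f^{\circ} \cdot (b \cdot b) \cdot f = f^{\circ} \cdot b \cdot f$, which dominates $a$ by the very definition of $V$-functor; and the counit $f_{*} \cdot f^{*} \leq b$ follows from (i), as $f_{*} \cdot f^{*} = b \cdot (f \cdot f^{\circ}) \cdot b \leq b \cdot b = b$. The only thing to be careful about is the bookkeeping of composition direction and of which of $a$, $b$ acts on which side of a distributor; the one genuinely substantive step is deriving the dual form $a \cdot f^{\circ} \leq f^{\circ} \cdot b$ needed for $f^{*}$, and once that and the identities $f_{*} = b \cdot f$, $f^{*} = f^{\circ} \cdot b$ are in place no real obstacle remains.
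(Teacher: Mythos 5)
Your proof is correct. The paper in fact states this lemma without any proof (it is used immediately afterwards to establish the proarrow equipment theorem), so there is no argument to compare against; your calculational route is the standard one and is exactly what the omitted proof would look like. All the ingredients check out against the paper's conventions: with composition $(s\cdot r)(x,z)=\bigvee_{y} r(x,y)\otimes s(y,z)$ one indeed has $f_{*}=b\cdot f$ and $f^{*}=f^{\circ}\cdot b$; the adjunction $f\dashv f^{\circ}$ in $\Mat{V}$, the idempotency $b\cdot b=b$, and the whiskered forms $f\cdot a\leq b\cdot f$ and $a\cdot f^{\circ}\leq f^{\circ}\cdot b$ all follow as you say; and your unit and counit computations $a\leq f^{*}\cdot f_{*}$ and $f_{*}\cdot f^{*}\leq b$ correctly use the fact, noted in the paper, that the identities in $\Dist{V}$ are the structures $a$ and $b$ themselves rather than $\Id$.
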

In this way we have two $2$-functors
$$ (-)_{*} : \Vcat^{\mathtt{co}} \rightarrow \Dist{V}, \ \  (- )^{*}:  \Vcat^{\op} \rightarrow \Dist{V}.$$
\begin{teorema}
$(-)_{*} : \Vcat^{\mathtt{co}} \rightarrow \Dist{V}$ defines a proarrow equipment \cite{CTGDC_1982__23_3_279_0, CTGDC_1985__26_2_135_0} on $\Vcat$, that is to say:
\begin{itemize}\label{Pro}
	\item $(-)_{*}$ is locally fully faithful;
	\item for all $V$-functors $f : (X,a) \rightarrow (Y,b)$, there exists a right adjoint to $f_{*}$ in $\Dist{V}.$
\end{itemize}
\end{teorema}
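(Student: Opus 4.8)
The plan is to treat the two bullet points independently. The second is immediate from the lemma that precedes the statement, so essentially all of the (light) work sits in the first. For the second bullet I would simply invoke that lemma: it asserts that, for every $V$-functor $f : (X,a) \to (Y,b)$, one has $f_* \dashv f^*$ in $\Dist{V}$. Thus $f^*$ is a right adjoint to $f_*$, which is exactly what is required, and there is nothing further to prove there.

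The content lies in showing that $(-)_*$ is locally fully faithful. Since everything is order-enriched, the hom-objects are posets, so local full faithfulness means that on each hom-poset the map is an order-embedding. Concretely, for parallel $V$-functors $f, g : (X,a) \to (Y,b)$ I would prove
$$f \leq g \text{ in } \Vcat^{\mathtt{co}} \iff f_* \leq g_* \text{ in } \Dist{V}.$$
Unwinding the definitions, the left-hand side says $k \leq b(g(x), f(x))$ for all $x \in X$ (recall that passing to $\Vcat^{\mathtt{co}}$ reverses the $2$-cell order), while the right-hand side says $b(f(x), y) \leq b(g(x), y)$ for all $x \in X$, $y \in Y$.

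For the forward implication I would use the triangle inequality $b(g(x), f(x)) \otimes b(f(x), y) \leq b(g(x), y)$ coming from $b \cdot b \leq b$, together with $k \leq b(g(x), f(x))$, to compute
$$b(f(x), y) = k \otimes b(f(x), y) \leq b(g(x), f(x)) \otimes b(f(x), y) \leq b(g(x), y).$$
For the converse I would specialize $b(f(x), y) \leq b(g(x), y)$ to $y = f(x)$ and use $\Id \leq b$, which yields $k \leq b(f(x), f(x)) \leq b(g(x), f(x))$, i.e. $f \leq g$ in $\Vcat^{\mathtt{co}}$. This establishes the order-equivalence, hence that $(-)_*$ is an order-embedding on each hom-poset, which is local full faithfulness.

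I do not expect a genuine obstacle. The only point to watch is the variance: because $f_*$ is built from $b(f(x), -)$, precomposition with $f$ reverses the order of $2$-cells, which is precisely why the domain must be $\Vcat^{\mathtt{co}}$ and not $\Vcat$. The two defining axioms of a $V$-category, $\Id \leq b$ and $b \cdot b \leq b$, supply the converse and the forward implications respectively, and together with the preceding lemma this completes both conditions in the definition of a proarrow equipment.
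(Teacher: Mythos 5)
Your proposal is correct and follows essentially the same route as the paper: the second bullet is discharged by the preceding lemma $f_{*} \dashv f^{*}$, and local full faithfulness is proved as an order-equivalence on hom-posets, with the forward implication via transitivity $b\cdot b \leq b$ and the converse by specializing to $y = f(x)$ and using $\Id \leq b$. The only cosmetic difference is that the paper states the equivalence as $f \leq g$ in $\Vcat$ iff $g_{*} \leq f_{*}$ in $\Dist{V}$, whereas you absorb the order reversal into $\Vcat^{\mathtt{co}}$; the computations coincide after swapping the roles of $f$ and $g$.
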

\begin{proof}
	For the first point we have to show that
	$$ f \leq g \mbox{ in } \Vcat \mbox{ iff }  g_{*} \leq f_{*} \mbox{ in } \Dist{V}.$$
	$\Rightarrow).$ Fix an element $x \in X$. From $k \leq \bigwedge_{x'\in X} b(f(x'),g(x'))$ we get $k \leq b(f(x), g(x))$. By applying composition we get
	$$ b(g(x), y ) \leq b(f(x), g(x)) \otimes b(g(x), y) \leq b(f(x),y).$$

$\Leftarrow).$ For every $x \in X$, we get
	$$ k \leq b(g(x),g(x) ) \leq b(f(x), g(x)),$$
	hence
	$$k \leq \bigwedge_{x'\in X} b(f(x'),g(x')).$$
	The second point follows from the previous lemma. \\
\end{proof}
As a corollary we get:
\begin{cor}\label{DistFun}
	A function $f: (X,a) \rightarrow (Y,b)$ between $V$-categories is a $V$-functor iff $f_{*}: X \xslashedrightarrow{} Y$ is a $V$-distributor; equivalently iff $f^{*} : Y \xslashedrightarrow{} X$ is a $V$-distributor.
\end{cor}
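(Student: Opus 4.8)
The two \emph{only if} implications are already delivered by the lemma preceding Theorem~\ref{Pro}: if $f$ is a $V$-functor, then $f_{*}$ and $f^{*}$ are distributors (and in fact $f_{*}\dashv f^{*}$). So the genuine content of the corollary lies in the two converse implications, and by symmetry it is enough to explain the argument for $f_{*}$; the case of $f^{*}$ will be handled dually. My plan is to argue relationally, exploiting the fact that every function is itself a $V$-relation via its graph (as in the earlier remark).

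Writing $f : X \xslashedrightarrow{} Y$ also for the graph $V$-relation of $f$, a one-line pointwise check gives the factorisations $f_{*} = b \cdot f$ and $f^{*} = f^{\circ} \cdot b$. The next step is to record the relational reformulation of $V$-functoriality: the defining inequality $a \leq f^{\circ} \cdot b \cdot f$ is equivalent to $f \cdot a \leq b \cdot f$. This can be seen by transposing along the adjunction $f \dashv f^{\circ}$ that the graph relation enjoys in $\Mat{V}$, or directly, since at $(x,x')$ both formulations unwind to $a(x,x') \leq b(f(x),f(x'))$. Thus it suffices to derive $f \cdot a \leq b \cdot f$ from the sole assumption that $f_{*}$ is a distributor.

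The key observation is that reflexivity of the codomain does half of the work for free. From $\Id \leq b$ and monotonicity of composition we obtain $f = \Id \cdot f \leq b \cdot f = f_{*}$; precomposing with $a$ and then invoking the distributor axiom $f_{*} \cdot a \leq f_{*}$ yields
$$ f \cdot a \leq f_{*} \cdot a \leq f_{*} = b \cdot f, $$
which is precisely the functoriality condition. The case of $f^{*}$ runs dually: here the relevant reformulation is $a \cdot f^{\circ} \leq f^{\circ} \cdot b$, and one uses $f^{\circ} = f^{\circ} \cdot \Id \leq f^{\circ} \cdot b = f^{*}$ together with the axiom $a \cdot f^{*} \leq f^{*}$ to conclude $a \cdot f^{\circ} \leq f^{*} = f^{\circ} \cdot b$.

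I do not anticipate any real obstacle; the only point demanding care is bookkeeping. One must select the \emph{correct} one of the two distributor axioms in each case---$f_{*} \cdot a \leq f_{*}$ for $f_{*}$, and $a \cdot f^{*} \leq f^{*}$ for $f^{*}$---and observe that the remaining axiom ($b \cdot f_{*} \leq f_{*}$, respectively $f^{*} \cdot b \leq f^{*}$) holds automatically for \emph{any} function, since it collapses to the transitivity $b \cdot b \leq b$ of the codomain and therefore encodes no information about $f$. This is also the structural reason why a single distributor condition already forces functoriality.
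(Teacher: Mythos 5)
Your proof is correct, and it is in fact more detailed than what the paper offers: the paper states Corollary~\ref{DistFun} with no proof at all, presenting it as an immediate consequence of the lemma ($f$ a $V$-functor $\Rightarrow$ $f_{*}, f^{*}$ distributors with $f_{*}\dashv f^{*}$) and of Theorem~\ref{Pro}, leaving the converse implications to the reader. Your relational argument supplies exactly the missing content. All the individual steps check out: the graph factorisations $f_{*}=b\cdot f$ and $f^{*}=f^{\circ}\cdot b$ are correct under the paper's composition convention $s\cdot r(x,z)=\bigvee_{y} r(x,y)\otimes s(y,z)$; the graph of a function does satisfy $\Id_X\leq f^{\circ}\cdot f$ and $f\cdot f^{\circ}\leq \Id_Y$ in $\Mat{V}$, so the transposition of $a\leq f^{\circ}\cdot b\cdot f$ into $f\cdot a\leq b\cdot f$ is legitimate (and your direct pointwise verification is a valid alternative); and the chain $f\cdot a\leq f_{*}\cdot a\leq f_{*}=b\cdot f$, using reflexivity $\Id\leq b$ and the distributor axiom $f_{*}\cdot a\leq f_{*}$, is exactly right, as is its dual for $f^{*}$ via $a\cdot f^{*}\leq f^{*}$. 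Your closing observation is a genuine bonus the paper does not record: since $b\cdot f_{*}=b\cdot b\cdot f\leq b\cdot f=f_{*}$ (and dually $f^{*}\cdot b\leq f^{*}$) holds for an arbitrary function by transitivity of $b$, a single distributor inequality already characterises $V$-functoriality, which sharpens the corollary slightly and explains structurally why the equivalence holds.
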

The theorem above allows us to "mirror" all the good features of $\Dist{V}$ in $\Vcat$, giving us a "higher" perspective on notions like:
weighted (co)limits, (co)ends, Kan extensions, and adjoint $V$-functors.\\
The key fact is that $\Dist{V}$ is "closed", which means that composition has an adjoint in each variable, more precisely, given two distributors
$$ \alpha : Z \xslashedrightarrow{} X, \ \ \beta : X \xslashedrightarrow{} Y,$$
one has:
\begin{itemize}
	\item $ (-) \cdot \alpha \dashv (-) \rhd \alpha;$
	\item $\beta \cdot (=) \dashv  \beta \lhd (=).$
\end{itemize}
where
\begin{alignat*}{2}
 (-) \cdot \alpha  : \Dist{V}(X,Y) &\rightarrow \Dist{V}(Z,Y) \\
\beta        &\longmapsto \beta \cdot \alpha
\end{alignat*}
\begin{alignat*}{2}
\beta \cdot (=)  : \Dist{V}(Z,X) &\rightarrow \Dist{V}(Z,Y) \\
\alpha     &\longmapsto \beta \cdot \alpha
\end{alignat*}
and, for $\gamma : Z \xslashedrightarrow{} Y,$
$$( \gamma \rhd \alpha ) (x,y) = \bigwedge_{z \in Z} [\alpha(z,x),\gamma(z,y) ],$$
$$(\beta \lhd\gamma  )(z,x) = \bigwedge_{y \in Y} [\beta(x,y) ,\gamma(z,y) ].$$
\begin{defin}
Let $f : (X,a) \rightarrow (Y,b)$ be a $V$-functor, $(Z,c)$ be a $V$-category, and let $j : X \xslashedrightarrow{} Z$ be a distributor. We say that a $V$-functor $g : (Z,c) \rightarrow (Y,b)$ is the \textit{colimit of $f$ weighted by $j$}, and denote it by $\colim(j,f)$, if $g_{*} \simeq f_{*} \rhd  j$, that is to say
$$g_{*}(z,y) = b(g(z), y) = \bigwedge_{x \in X} [j(x,z), f_{*}(x,y)].\ \  (1)$$
Dually, if we have a distributor $ l : Z \xslashedrightarrow{} X$, we say that a $V$-functor $h : (Z,c) \rightarrow (Y,b)$ is the \textit{limit of $f$ weighted by $l$}, and denote it by $\limit(l,f)$, if $h^{*} \simeq l \lhd f^{*}$, that is to say
$$h^{*}(y,z) = b(y,h(z)) = \bigwedge_{x \in X} [l(z,x), f^{*}(y,x)]. \ \ (2)$$
A $V$-category $(Y,b)$ is called \textit{(co)complete} if it has all weighted (co)limits; this means that, for all $V$-functors $f : (X,a) \rightarrow (Y,b)$ and for all distributors ($j : X \xslashedrightarrow{} Z$) $ h : Z \xslashedrightarrow{} X$, the (co)limit of $f$ weighted by $(j)$ $h$ exists.
\end{defin}
\begin{re}
	Using the fact that
	$$\mathbb{D}(X)(j,l) = \bigwedge_{x \in X} [j(x), l(x)],$$
	(and similarly for $\mathbb{U}(X)$) we can re-write $(1)$ and $(2)$ as
	$$b(\colim(j,f)(z), y) \simeq \mathbb{D}(X)(j(-,z),b(f(-), y)), $$
	$$b(y, \limit(l,f)(z)) \simeq \mathbb{U}(X)(b(y,f(=)), l(z, =)).$$
\end{re}
\begin{Exs}
	Let $(X,a)$ be a $V$-category and $x :K \rightarrow X$ be a point (seen as a $V$-functor from the one point $V$-category $K = (1,k)$). Let $u \in V$, we view it as a distributor $ \tilde{u} : K \xslashedrightarrow{} K$ by defining $u(\bullet,\bullet) = u$.
	\begin{enumerate}
		\item The colimit of $x$ weighted by $\tilde{u}$, usually denoted by $x \odot u$, is called \textit{copower} (also: \textit{tensor} in the literature). If we unravel the definition of colimit, we get (for all $y \in Y$)
		$$ a (x \odot u, y) = [u, a(x,y)].$$
		A category in which all copowers exist is called \textit{copowered} (also: \textit{tensored}).
		\item Dually, we can consider also the limit of $x$ weighted by $\tilde{u}$. This is usually denoted by $ x \pitchfork u$ and it is called \textit{power} (also: \textit{cotensor}).\\
		A category in which all powers exist is called \textit{powered} (also: \textit{cotensored}).
\end{enumerate}
Let $f : I \rightarrow X, \ \ i \mapsto x_i$ be a $V$-functor, where the set $I$ is equipped with the $V$-structure
$$
I(i,j) =
\begin{cases}
\perp &\mbox{if } i \neq j, \\
k &\mbox{if } i = j.
\end{cases}
$$
\begin{enumerate}
	 \setItemnumber{3}
	\item Let $ \alpha  : I \xslashedrightarrow{} K$ be the constant distributor $\alpha(i,\bullet) = k$. The colimit of $f$ weighted by $\alpha$ is an element $\tilde{x}$, such that (for all $y \in X$)
	$$ a(\tilde{x}, y) = \bigwedge_{i} a(x_i, y).$$
	Such a colimit is called \textit{conical supremum}, and, since when $V = \mathbf{2}$ it coincides with the supremum of the set $\{x_i \mbox{ | } i \in I\}$, it is usually denoted by $\tilde{x} = \bigvee_{i} x_i.$\\
	\setItemnumber{4}
	\item Dually, let  $\beta : K\xslashedrightarrow{} I$ be the same distributor as above, but seen as an arrow from $K$ to $I$. The limit of $f$ weighted by $\beta$ is an element $\tilde{x}$,  such that (for all $y \in X$)
	$$ a(y, \tilde{x}) = \bigwedge_{i} a(y,x_i).$$
	Such a limit is called \textit{conical infimum}; as before, when $V = \mathbf{2}$ it coincides with the infimum of the the set $\{x_i \mbox{ | } i \in I\}$, and for this reason it is usually denoted by $\tilde{x} = \bigwedge_{i} x_i$.
\end{enumerate}
\end{Exs}
\begin{re}
\label{Deco1}
All the (co)limits we described above play a very important role, as they are basic building blocks: every (co)limit can be written as a suitable composite of them. \\
Let $f : (X,a) \rightarrow (Y,b)$ be a $V$-functor and let $j : X \xslashedrightarrow{} Z$ be a distributor. If $(Y,b)$ is copowered and $\colim(j,f)$ exists, then we have
$$ \colim(j,f)(z) \simeq \bigvee_{x\in X} j(x,z) \odot f(x).$$
Dually, if $(Y,b)$ is powered and $\limit(l,f)$ exists (for a distributor $ l : Z \xslashedrightarrow{} X$), one has
$$ \limit(l,f)(z) \simeq \bigwedge_{x \in X} l(z,x) \pitchfork f(x).$$
\end{re}
\begin{re}
 Just as the set of down-closed subsets and the set of up-closed subsets of an ordered set $X$ have all suprema and infima, for a $V$-category $X$, $\mathbb{U}(X)$ and $\mathbb{D}(X)$ are both complete and (co)complete $V$-categories.
\end{re}
Another important concept we are going to use is the concept of \textit{adjunction} between a pair of $V$-functors
\[
\begin{tikzcd}
(X,a)\ar[r,bend left,"f",""{name=A, below}] & (Y,b). \ar[l,bend left,"g",""{name=B,above}] \ar[from=A, to=B, symbol=\dashv]
\end{tikzcd}
\]
This concept generalizes the classical notion of Galois connection from order theory.
\begin{defin}
Let $f : (X,a) \rightarrow (Y,b)$ and $g : (Y,b) \rightarrow (X,a)$ be two $V$-functors; $f$ is \textit{left adjoint} to $g$ (vice versa: $g$ is \textit{right adjoint} to $f$) and denoted by $f \dashv g$, if $f_{*} \simeq g^{*}$. In pointwise terms, this means that, for all $x,y \in X,Y$,
$$ b(f(x),y) = a(x,g(y)).$$
\end{defin}
\begin{Exs}
	Let $(X,a)$ be a $V$-category.
	\begin{enumerate}
		\item If $x \odot u$ exists for all $u \in V$, then we can rephrase the universal property of the colimit:
		$$ a (x \odot u, y) = [u, a(x,y)],$$
		by saying that $  x \odot =  \dashv a(x,=).$
		\\
		Thus, copowered categories can be equivalently described as those $V$-categories for which, for all $x \in X,$ $a(x,=)$ admits a left adjoint.
		\item If $x \pitchfork u$ exists for all $u \in V$, then we can rephrase the universal property of the limit:
		$$ [u, a(y,x)] = a(y, x \pitchfork u),$$
		by saying that $  x \pitchfork = \dashv a(-,x).$
		\\
		Thus, powered categories can be equivalently described as those $V$-categories for which, for all $x \in X,$ $a(-,x)$ admits a right adjoint.
	\end{enumerate}
\end{Exs}
Let $f: (X,a)\rightarrow (Y,b)$ and $g: (Y,b) \rightarrow (Z,c)$ be two $V$-functors and let $j : X\xslashedrightarrow{} D$ be a distributor. From the definition of weighted colimit it follows that
$\colim(j, gf) \leq g(\colim(j,f))$. \\
Vice versa, if we consider a distributor $l : D \xslashedrightarrow{} X$, from the definition of weighted limit it follows that $g(\limit(l,f)) \leq \limit(h,gf).$\\
If the equality holds, we say that $g$ preserves the (co)limit of $f$ weighted by $(j)$ $h$. If $g$ preserves all (co)limits that exist, we say that $g$ is \textit{(co)continuous}.
\begin{Exs}\label{ColLims}
	Let $(X,a)$ be a $V$-category.
	\begin{enumerate}
		\item The $V$-functor $a(x, =) : X \rightarrow V$ is continuous.
		\item \label{Inv} The $V$-functor $a(-,x) : X^{\op} \rightarrow V$ sends weighted colimits in $X$ to weighted limits. This means that $a(\colim(j,f), x) = \limit(j^{\circ}, a(-,x)), $ for all fitting $j$ and $f$; where $j^{\circ}$ is the distributor obtained from $j$ by applying the involution $(-)^{\circ}$.
		\item Right adjoints are continuous; dually, left adjoints are cocontinuous.
 	\end{enumerate}
\end{Exs}
There is a nice characterization of (co)complete $V$-categories in terms of adjunctions.
\begin{teorema}\label{Coco}
	Let $(X,a)$ be a $V$-category. Then $(X,a)$ is complete iff the co-Yoneda embedding has a right adjoint; dually, it is cocomplete if the Yoneda embedding has a left adjoint. Moreover, in case they exist, they have the following form:
	$$ \limit : \mathbb{U}(X) \rightarrow X, \ \  l \mapsto \limit(l,\Id),$$
	$$ \colim : \mathbb{D}(X) \rightarrow X, \ \ j \mapsto \colim(j,\Id).$$
\end{teorema}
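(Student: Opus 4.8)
The plan is to prove the cocomplete half of the statement---that $(X,a)$ is cocomplete if and only if $\Yo{X}$ admits a left adjoint, with that left adjoint given by $\colim$---and then to obtain the complete half formally by applying it to $X^{\op}$, using that limits in $X$ are colimits in $X^{\op}$, that $\mathbb{U}(X) \simeq \mathbb{D}(X^{\op})^{\op}$, and that $\CoYo{X}$ corresponds to $\Yo{X^{\op}}$ under this identification. The computational engine throughout is the identity
$$ \bigwedge_{x \in X}[\phi(x), a(x,y)] = \mathbb{D}(X)(\phi, \Yo{X}(y)), $$
valid for every presheaf $\phi \in \mathbb{D}(X)$ and every $y \in X$, which just unwinds $\mathbb{D}(X)(j,l) = \bigwedge_x [j(x), l(x)]$ together with $\Yo{X}(y) = a(-,y)$.

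First I would treat presheaf-weighted colimits of the identity. Viewing a presheaf $\phi$ as a distributor $\phi : X \xslashedrightarrow{} K$, the colimit $\colim(\phi, \Id)$, when it exists, is the point of $X$ characterised by $a(\colim(\phi,\Id), y) = \bigwedge_x[\phi(x), a(x,y)]$. If $X$ is cocomplete these all exist; setting $\colim(\phi) := \colim(\phi,\Id)$ yields a function $\mathbb{D}(X) \to X$ whose associated $V$-relation $(\phi, y) \mapsto a(\colim(\phi), y)$ equals $\Yo{X}^{*}$ by the displayed identity. Since $\Yo{X}^{*}$ is a distributor (the image under $(-)^{*}$ of a $V$-functor), Corollary \ref{DistFun} upgrades $\colim$ to a $V$-functor, and the equality $\colim_{*} = \Yo{X}^{*}$ is exactly $\colim \dashv \Yo{X}$. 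Conversely, from any left adjoint $L \dashv \Yo{X}$ the adjunction formula $a(L\phi, y) = \mathbb{D}(X)(\phi, \Yo{X}(y))$ recovers the defining equation of $\colim(\phi, \Id)$, so $L\phi \simeq \colim(\phi,\Id)$ and all presheaf-weighted colimits of the identity exist.

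The main work---and the step I expect to be the real obstacle---is then to bootstrap from presheaf-weighted colimits of the identity to arbitrary weighted colimits, in order to deduce full cocompleteness from the mere existence of $L$. Given a diagram $f : (A, \alpha) \to (X,a)$ and a weight $j : A \xslashedrightarrow{} Z$, I would form for each $z \in Z$ the presheaf $\psi_z := j(-,z)\cdot f^{*} \in \mathbb{D}(X)$, that is $\psi_z(w) = \bigvee_{x \in A} a(w, f(x)) \otimes j(x,z)$, and claim $\colim(j,f)(z) \simeq L(\psi_z)$. The verification is pointwise: expanding $a(L\psi_z, y) = \bigwedge_w[\psi_z(w), a(w,y)]$, distributing the internal hom over the supremum defining $\psi_z$ and using $[u\otimes v, w] = [u,[v,w]]$, the inner infimum $\bigwedge_w [a(w,f(x)), a(w,y)]$ collapses to $a(f(x), y)$ by the Yoneda lemma, since it equals $\mathbb{D}(X)[\Yo{X}(f(x)), \Yo{X}(y)] = a(f(x),y)$; what remains is exactly $\bigwedge_x[j(x,z), a(f(x),y)]$, the defining expression for $a(\colim(j,f)(z), y)$. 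That $z \mapsto L(\psi_z)$ is itself a $V$-functor follows because $z \mapsto \psi_z$ is one into $\mathbb{D}(X)$ and $L$ is a $V$-functor. Hence every weighted colimit into $X$ exists, giving cocompleteness.

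Finally, the complete case follows by dualising: applying the cocomplete equivalence to $X^{\op}$, translating $\colim$ on $\mathbb{D}(X^{\op})$ back to $\limit$ on $\mathbb{U}(X)$, and reading off that $(X,a)$ is complete iff $\CoYo{X}$ has a right adjoint $\limit : \mathbb{U}(X)\to X$, $l \mapsto \limit(l, \Id)$. The only points needing care are the bookkeeping of the opposite-category identifications and the choice of the weight $\psi_z$; both are routine once the Yoneda collapse above is in place.
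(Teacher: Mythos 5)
Your proof is correct, and it is worth noting that the paper does not actually prove Theorem \ref{Coco} at all: the statement is recorded as standard, and the remark immediately following it indicates the quickest route, namely that $\mathbb{D}(-)$ and $\mathbb{U}(-)$ are Kock--Z\"oberlein monads, so that to produce the desired adjoint it suffices to exhibit a left inverse to the (co-)Yoneda embedding. Your argument is therefore a genuinely different, self-contained alternative built only from the paper's own definitions. Its two pivots are both sound: first, identifying the graph of $\phi \mapsto \colim(\phi,\Id)$ with the distributor $(\Yo{X})^{*}$ and invoking Corollary \ref{DistFun} to upgrade it to a $V$-functor, after which the adjunction is literally the paper's definition of adjointness, $f \dashv g$ precisely when $f_{*} \simeq g^{*}$; second, and more substantially, the bootstrap from presheaf-weighted colimits of $\Id$ to arbitrary weighted colimits via the weight $\psi_z = j(-,z)\cdot f^{*}$, where your Yoneda collapse $\bigwedge_{w}[a(w,f(x)),a(w,y)] = a(f(x),y)$ does the work and the mate construction $\ulcorner j \cdot f^{*} \urcorner$ correctly supplies the $V$-functoriality of $z \mapsto \psi_z$; this is the pointwise form of the classical identity $\colim(j,f) \simeq \colim(j\cdot f^{*},\Id)$, which the KZ shortcut silently subsumes (exhibiting a retraction of $\Yo{X}$ only gives cocompleteness cheaply because this equivalence is already known). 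The dualisation through $\mathbb{U}(X) \simeq \mathbb{D}(X^{\op})^{\op}$, with $\CoYo{X}$ matching $\Yo{X^{\op}}$, is indeed routine for quantale enrichment, since there are no coherence issues and $(-)^{\circ}$ converts the limit equation $a(y,h(z)) = \bigwedge_{x}[l(z,x),a(y,f(x))]$ into the colimit equation in $(X,a^{\circ})$. In short: the paper's KZ route buys brevity and the conceptual fact that cocompleteness is property-like (algebra structure for a KZ monad is essentially unique), while your route buys a complete elementary verification that makes the hidden bootstrap step explicit.
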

\begin{re}
	Since both $\mathbb{U}(-)$ and $\mathbb{D}(-)$ define  \textit{Kock–Zöberlein} monads (see \cite{KOCK199541}) on $\Vcat$, in order to prove that the (co)-Yoneda embedding has a (right)left adjoint, it is sufficient to provide a left inverse to it.
\end{re}
\section{Quantale-Enriched Multicategories}
$(L,V)$-categories are a special case of the more general $(T,V)$-categories, where the list monad $L$ is considered. They are also the order-enriched version of \textit{multicategories} (see \cite{leinster2004higher} for an account on them, and \cite{Lambek} for a historical perspective). The basic idea is that, instead of having arrows with just a single object as the domain, we allow them to have as domain a list of objects.\par\medskip

In this section we introduce $(L,V)$-categories and some of their basic constructions, by mirroring what we have done in the previous section.
\subsection{(L,V)-Categories and (L,V)-functors}
Recall that the list monad is defined as the monad whose underlying functor is given by
$$L : \mathtt{Sets} \rightarrow \mathtt{Sets}, \ \ f: X \rightarrow Y \mapsto Lf : \amalg_{n \geq 0} X^n \rightarrow \amalg_{m \geq 0} Y^m , \ \ \underline{x} =(x_1,..., x_n) \mapsto (f(x_1),...,f(x_n)),$$
and whose unit and multiplication at a set $X$ are defined as:
\begin{itemize}
	\item $e_X : X \rightarrow L(X),  \ \ x \mapsto (x);$
	\item $m_X : L^2(X) \rightarrow L(X), \ \ (\underline{x}_1,...,\underline{x}_n) \mapsto (x_{11}, ..., x_{1k}, ..., x_{n1}, ...,x_{nl}).$
\end{itemize}

\begin{re}
	Let $\underline{x}$, $\underline{w}$ be two lists; in order to avoid possible confusion with the list of lists $\underline{\underline{y}} = (\underline{x},\underline{w})$, we denote the list obtained by concatenating $\underline{x}$ and $\underline{w}$ as $(\underline{x};\underline{w})$. Moreover, in the case in which one of the two is a single element list, we use the shortcut $(\underline{x};w)$ instead of $(\underline{x};(w)).$
\end{re}

We can extend (in a functorial way) the list monad $L$ to $\Mat{V}$ by defining, for $r: X \xslashedrightarrow{} Y$:
$$\tilde{L}r : L(X)  \xslashedrightarrow{} L(Y),  \ \ (\underline{x}, \underline{y}) \mapsto \begin{cases}
r(x_1,y_1) \otimes ... \otimes r(x_n, y_n) \mbox{ if the two lists have the same length,} \\
\perp \mbox{ otherwise.} \\
\end{cases} $$
One can prove that this particular extension defines a monad on $\Mat{V}$ that, moreover, preserves the involution
$$(-)^{\circ} : \Mat{V}^{\op} \rightarrow \Mat{V}.$$
\begin{re}
	From now on we will use $L$ for both the ordinary list monad and its extension to $\Mat{V}$.
\end{re}
This allows us to define the order-enriched category $\Mat{(L,V)}$ in which a morphism $r: X \kmodto Y$ is a $V$-relation of the form
$$ r : L(X) \xslashedrightarrow{} Y,$$
and in which composition is given by
$$ s \circ r = s \cdot Lr \cdot m_X^{\circ},$$
where $e_X^{\circ} : X \kmodto X$ is the identity.
\begin{re}\label{K}
	Note that, due to the Kleisli-style composition we defined, $ - \circ r$ preserves suprema, but $s \circ (=)$ does not in general.
\end{re}
\begin{defin}
		An $(L,V)\mbox{-}$category is a pair $(X, a)$, where $X$ is a set and $a : X \kmodto X$ is an $(L,V)$-relation that satisfies:
	\begin{itemize}
		\item $e_X^{\circ} \leq a$;
		\item $a \circ a \leq a.$
	\end{itemize}
\end{defin}
\begin{re}
	When $V = \mathbf{2}$, the $(L,\mathbf{2})$-structure of an $(L,\mathbf{2})$-category $(X,a)$ is a subset $a \subseteq L(X) \times X$ such that:
	\begin{itemize}
		\item for all $x \in X$, $ ((x),x) \in a$;
		\item given $(\underline{z}_1,...,\underline{z}_n) \in L^2(X)$, $\underline{x} \in LX$, and $y \in X$, such that
		$$((\underline{z}_1,...,\underline{z}_n),\underline{x} )\in La, \mbox{ and } (\underline{x}, y) \in a,$$
		then
		$$((\underline{z}_1;...;\underline{z}_n),y) \in a.$$
	\end{itemize}
	Notice that $La$ is the subset that corresponds to the relation $La : L^2(X) \times LX \rightarrow \mathbf{2}$ which one obtains by applying the extension of the list monad to the relation $a : LX \times X \rightarrow \mathbf{2}$.
\end{re}
\begin{defin}
	Given two $(L,V)\mbox{-}$categories $(X,a), (Y,b)$, an \textit{$(L,V)\mbox{-}$functor} $f: (X,a) \rightarrow (Y,b)$ is a function between the underlying sets such that
	$$ a \leq f^{\circ}\cdot b \cdot Lf$$
	which, in pointwise terms, means that, for all $\underline{x} \in LX$, $y\in X$,
	$$ a(\underline{x},y) \leq b(Lf(\underline{x}), f(y)).$$
	If the equality holds, we call $f$ fully faithful.
\end{defin}
\begin{re}
If $V=\mathbf{2}$, then an $(L,\mathbf{2})$-functor $f: (X,a) \rightarrow (Y,b)$ satisfies, for all $\underline{x} \in LX$, $y\in X$,
$$ (\underline{x},y) \in a \mbox{ implies } (Lf(\underline{x}), f(y)) \in b.$$
Notice how this generalizes the classical monotonicity condition.
\end{re}
In this way we define $\Multi{V}$ as the category whose objects are $(L,V)$-categories and whose arrows are $(L,V)$-functors, moreover, $\Multi{V}$ becomes an order-enriched category by defining, for two $(L,V)$-functors $f,g : (X,a) \rightarrow (Y,b)$,
$$ f \leq g \mbox{  iff }  k \leq \bigwedge_{x \in X} b(Lf((x) ),g(x)).$$
\begin{Exs}
	\begin{enumerate}
		\item Every set $X$ defines an $(L,V)$-category with $e^{\circ}_X$ as $(L,V)$-structure. In particular, we can define the one-point $(L,V)$-category $E  = (1, e^{\circ}_{1} ).$
		\item Every set $X$ defines an $(L,V)$-category if we consider the free $L$-algebra on $X$, $(LX, m_X)$.
		\item $V$ itself defines an $(L,V)$-category where $[\underline{v}, w] = [v_1 \otimes ... \otimes v_n, w].$
		\item For two $(L,V)\mbox{-}$categories $(X,a), (Y,b)$, we can form their tensor product $X \boxtimes Y  = (X \times Y, a \boxtimes b),$ where
		$$ a \boxtimes b (\gamma, (x,y)) = a(L\pi_1(\gamma), x) \otimes b(L\pi_2(\gamma), y);$$
		here $\gamma \in L(X \times Y)$ and $\pi_1 ,\pi_2$ are the obvious projections. Unfortunately, in general it is not true that $X \boxtimes E \simeq X.$
	\end{enumerate}
\end{Exs}
	In general, every monoidal $V$-category $(X,a, \mult, u_X)$ defines an $(L,V)$-category, where $a(\underline{x}, y) = a(x_1 \mult ... \mult x_n,y).$\footnote{In particular $a((-), y) = a(u_X, y)$.} This motivates the following definition.
\begin{defin}\label{Repre}
	Let $(X,\hat{a})$ be an $(L,V)$-category. We call $X$ \textit{representable} if there exists a monoidal $V$-category $(X,a, \mult, u_X)$ such that $ \hat{a} = a \cdot \alpha$, where $$\alpha : L(X) \rightarrow X, \ \ \underline{x} \mapsto x_1 \mult ... \mult x_n.$$
\end{defin}
\begin{re}\label{Mon}
	Definition \ref{Repre} allows us to define a $2$-functor $ \mathtt{Kmp} :\Vcat^L \rightarrow \Multi{V}$ which has a left adjoint $M : \Multi{V} \rightarrow \Vcat$ that sends an $(L,V)$-category $(X,a)$ to $(LX, La \cdot m_X^{\circ}, m_X)$ and an $(L,V)$-functor $f$ to $Lf$. $M$ is also a $2$-functor.\\
	Using the aforementioned adjunction, we can extend the monad $L$ to a monad on $\Multi{V}$, denoted by $L$ as well. Moreover, one can prove (see \cite{Chikhladze2015}) that there is an equivalence
	$$\Vcat^L \simeq \Multi{V}^L.$$
\end{re}
\begin{re}
A priori, due to the non-symmetric form of arrows in $\Mat{(L,V)}$, it is not clear how to define an $(L,V)$-category that seems to play the role of a dual. Luckily, we can use the adjunction $\mathtt{Kmp} \dashv M$ and the involution in $\Mat{V}$ to define, for an $(L,V)$-category $(X,a)$, its opposite category as $X^{\op} =(LX, m_X \cdot L a^{\circ} \cdot m_X)$. At first this might be seen as an \textit{ad hoc} definition, but if we apply this construction to a $V$-category $(X,a)$, seen as an $(L,V)$-category $(LX,e_X^{\circ} \cdot a)$, we get
	$$X^{\op} = \mathtt{Kmp}(LX, L a^{\circ}),$$
	where $(LX, L a^{\circ})$ is the dual, as a $V$-category, of $(LX, La)$.
\end{re}
For any $(L,V)$-category $(X,a)$, we can form the $(L,V)$-category $\mathbb{D}_L(X)[-,=]$ whose underlying set consists of all $(L,V)$-functors of the form: $f : X^{\op} \boxtimes E\rightarrow V$ and whose $(L,V)$-structure is given by \label{DLstructure}
$$ \mathbb{D}_L(X)[\underline{f},g] =  \bigwedge_{(\underline{x}_1, ...,\underline{x}_n) \in LX^2 } [(f_1(\underline{x}_1),..., f_n(\underline{x}_n)), g(m_x((\underline{x}_1, ...,\underline{x}_n)))],$$
where $\underline{f} \in L( \mathbb{D}_L(X)) $ and $g \in  \mathbb{D}_L(X).$
\begin{re}\label{MultiYoneda}
We have a fully faithful functor, called the Yoneda embedding,
$$
\Yo{X} : X \rightarrow \mathbb{D}_L(X), \ \ x \mapsto a(-, x).
$$
Moreover, it can be proved that
$$ \mathbb{D}_L(X)[L\Yo{X}(\underline{x}),g] = g(\underline{x}).$$
The last result is known as the Yoneda Lemma.
\end{re}
\subsection{Distributors and Weighted Colimits in $\Multi{V}$}
The relational point of view we used for introducing $V$-categories allows us to provide the corresponding notion of distributor for $(L,V)$-categories by considering the composition $\circ$ defined in the previous section. Unfortunately, due to the non-symmetric form of this composition, we won't be able to provide a theory of weighted limits but only one for weighted colimits.
\begin{defin}\cite{CH09}
	Given two $(L,V)$-categories $(X,a), (Y,b)$, an \textit{$(L,V)$-distributor} $j  : (X,a) \kmodto (Y,b)$ is an $(L,V)$-relation between the underlying sets such that:
	\begin{itemize}
		\item $j \circ a \leq j;$
		\item $b \circ  j \leq j.$
	\end{itemize}
\end{defin}
Just as in the $V$-case, we can define an order-enriched category $\Dist{(L,V)}$, where the composition is the one defined in $\Mat{(L,V)}$.
\begin{re}\label{mate}
	As in the $V$-case, one can prove
	$$\Dist{(L,V)}(X,Y) \simeq \Multi{V}(X^{\op} \boxtimes Y,V)\simeq \Multi{V}(Y,\mathbb{D}_L(X)).$$
	In particular, to every $(L,V)$-distributor $j : X \kmodto Y$ we can associate its \textit{mate}
	$$ \ulcorner j \urcorner : Y \rightarrow \mathbb{D}(X), \ \ y \mapsto j(-,y).$$
\end{re}
Just as in the $V$-case, we have the equivalent of Theorem \ref{Pro}; where now, for an $(L,V)\mbox{-}$functor $f: (X,a) \rightarrow (Y,b)$, the two associated $(L,V)$-distributors are:
\begin{enumerate}
	\item $f_{\circledast} : X \kmodto Y, \ \ f_{\circledast}(\underline{x},y) = b(Lf(\underline{x}), y);$
	\item $f^{\circledast} : Y \kmodto X, \ \ f^{\circledast}(\underline{y},x) = b(\underline{y},f(x)).$
\end{enumerate}
As we have already noticed in Remark \ref{K}, due to the intrinsic asymmetry of the Kleisli composition, in contrast to the $V$-case, we can only prove that, for every $(L,V)$-distributor $\alpha : Z \kmodto X,$
\begin{alignat*}{2}
(-) \circ \alpha  : \Dist{(L,V)}(X,Y) &\rightarrow \Dist{(L,V)}(Z,Y) \\
\beta        &\longmapsto \beta\circ \alpha
\end{alignat*}
has a right adjoint
\begin{alignat*}{2}
(-) \blacktriangleright \alpha  : \Dist{(L,V)}(Z,Y) &\rightarrow \Dist{(L,V)}(X,Y) \\
\beta        &\longmapsto \beta \blacktriangleright \alpha = \beta \rhd \hat{\alpha}
\end{alignat*}
where $\hat{\alpha} = L\alpha \cdot m_X^{\circ}.$\\
\begin{defin}
	Let $f : (X,a) \rightarrow (Y,b)$ be an $(L,V)$-functor and let $j : X \kmodto Z$ be an $(L,V)$-distributor. We say that an $(L,V)$-functor $g : (Z,c) \rightarrow (Y,b)$ is the \textit{colimit of $f$ weighted by $j$}, and denote it by $\Tcolim{L}(j,f)$, if $g_{\circledast} \simeq f_{\circledast} \blacktriangleright  j$, that is to say
	$$g_{\circledast}(\underline{z},y) = b(Lg(\underline{z}), y) = \bigwedge_{\underline{x} \in LX} [\hat{j}(\underline{x},z), f_{\circledast}(\underline{x},y)],\ \  (1)$$
	An $(L,V)$-category $(Y,b)$ is called \textit{cocomplete} if it has all weighted colimits; this means that, for all $(L,V)$-functors $f : (X,a) \rightarrow (Y,b)$ and for all $(L,V)$-distributors $ h : Z \kmodto X$, the colimit of $f$ weighted by $h$ exists.
\end{defin}
In contrast to what happens in $\Vcat$, in $\Multi{V}$ cocompleteness cannot be reduced to distributors into $K$ (a.k.a. presheaves). Nevertheless, the analogue of Theorem \ref{Coco} still holds also in the $(L,V)$-case (see \cite[Theorem~2.7]{HOFMANN2011283}).
\begin{Exs}
	\begin{enumerate}
		Let $(X,a)$ be an $(L,V)$-category.
		\item One can prove that every element of $ \mathbb{D}_L(X)$ can be written as the colimit of $\Yo{X}$ weighted by itself.
		\item Consider the constant $(L,V)$-relation: $u : E \kmodto E$, where $u \in V$. Let $x \in X$, $\Tcolim{L}(u \cdot a(-,x),\Id)(1)$ (if it exists) coincides with the copower $x \odot u$ of the underlying $V$-category $(X, e_X \cdot a).$
	\end{enumerate}
\end{Exs}
When the target category is representable, we have the following proposition.
\begin{prop}\label{LCol}
	Let $f : (X,a) \rightarrow (Y,b \cdot \alpha)$ be an $(L,V)$-functor, with $(Y, b \cdot \alpha)$ representable, and let $j : X \kmodto E$ be an $(L,V)$-distributor. If $\Tcolim{L}(j,f)$ exists, then
	$$ \Tcolim{L}(j,f)(1) \simeq \colim(e_1^{\circ} \cdot\hat{j} , \alpha Lf),$$
	where $\alpha Lf : \mathtt{Kmp}(X) \rightarrow (Y,b).$
\end{prop}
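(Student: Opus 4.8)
The plan is to show that the single value $g(1)$, where $g := \Tcolim{L}(j,f) : E \to (Y,\hat b)$ and $\hat b = b\cdot\alpha$ is the representable $(L,V)$-structure, is precisely the $V$-weighted colimit on the right. Since $V$-weighted colimits are unique up to isomorphism (a consequence of the local full faithfulness of $(-)_*$ in Theorem \ref{Pro}), it suffices to exhibit the constant $V$-functor $\bar g : K \to (Y,b)$ at $g(1)$ and to verify the defining equation $\bar g_* \simeq (\alpha Lf)_* \rhd (e_1^\circ\cdot\hat j)$.

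First I would extract the value $b(g(1),y)$ from the $(L,V)$-universal property. By definition $g_\circledast \simeq f_\circledast \blacktriangleright j = f_\circledast \rhd \hat j$, so for every $\underline z \in L1$ and $y \in Y$,
$$ g_\circledast(\underline z, y) = \bigwedge_{\underline x \in LX} [\hat j(\underline x, \underline z), f_\circledast(\underline x, y)]. $$
I would then specialise to the one-element list $\underline z = (1)$. On the left, representability gives $g_\circledast((1),y) = \hat b(Lg((1)),y) = b(\alpha Lg((1)),y) = b(g(1),y)$, since $\alpha$ sends a one-element list to its unique entry. A short unfolding of $\hat j = Lj\cdot m_X^\circ$ at the one-element target list shows $\hat j(\underline x,(1)) = j(\underline x)$, so that
$$ b(g(1), y) = \bigwedge_{\underline x \in LX} [j(\underline x), f_\circledast(\underline x, y)]. \qquad (\star) $$

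Next I would rewrite the right-hand side in the same form. Writing $M(X) = (LX, La\cdot m_X^\circ)$ for the domain of $\alpha Lf$ (denoted $\mathtt{Kmp}(X)$ in the statement) and $w := e_1^\circ\cdot\hat j : M(X)\kmodto K$ for the weight, an elementary computation of the composite in $\Mat{V}$ shows that $e_1^\circ$ selects exactly the one-element list, whence $w(\underline x,\bullet) = \hat j(\underline x,(1)) = j(\underline x)$. Using representability a second time, the $(L,V)$-distributor of the diagram coincides with the $V$-distributor of $\alpha Lf$: $f_\circledast(\underline x,y) = \hat b(Lf(\underline x),y) = b(\alpha Lf(\underline x),y) = (\alpha Lf)_*(\underline x,y)$. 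Consequently the defining formula of $\colim(w,\alpha Lf)$, evaluated at $(\bullet, y)$, reads
$$ \big((\alpha Lf)_*\rhd w\big)(\bullet, y) = \bigwedge_{\underline x\in LX}[w(\underline x,\bullet),(\alpha Lf)_*(\underline x,y)] = \bigwedge_{\underline x\in LX}[j(\underline x), f_\circledast(\underline x,y)], $$
which is exactly the right-hand side of $(\star)$. Hence $\bar g_* = (\alpha Lf)_*\rhd w$, so $\bar g = \colim(e_1^\circ\cdot\hat j,\alpha Lf)$, establishing both existence of the right-hand colimit and the isomorphism $\Tcolim{L}(j,f)(1)\simeq\colim(e_1^\circ\cdot\hat j,\alpha Lf)$.

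I expect the difficulty to be bookkeeping rather than conceptual. The delicate points are: tracking the type of $\hat j$ as a $V$-relation $LX\xslashedrightarrow{} L1$ (with $L1$ indexed by list length); confirming that $w = e_1^\circ\cdot\hat j$ is a genuine $V$-distributor $M(X)\kmodto K$ and that $\alpha Lf$ is a $V$-functor $M(X)\to (Y,b)$, so that the right-hand colimit is well posed; and recognising that restricting the $(L,V)$-universal property to the one-element list loses no information about $g(1)$, the values $g_\circledast(\underline z,y) = b(\alpha Lg(\underline z),y)$ at longer lists being forced once $g(1)$ is known, precisely by representability. The two uses of $\hat b = b\cdot\alpha$ are exactly what make the two universal formulas collapse onto the common expression $(\star)$.
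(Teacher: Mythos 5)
Your proposal is correct and takes essentially the same route as the paper: evaluate the defining formula of the $(L,V)$-colimit at the singleton list, use representability twice to get $g_{\circledast}((1),y)=b(g(1),y)$ and $f_{\circledast}(\underline{x},y)=b(\alpha Lf(\underline{x}),y)=(\alpha Lf)_{*}(\underline{x},y)$, and identify $\hat{j}(\underline{x},(1))=j(\underline{x})=(e_1^{\circ}\cdot\hat{j})(\underline{x})$, which is exactly the paper's four-line computation. The well-posedness points you flag but defer (that $\alpha Lf$ is a $V$-functor on $\mathtt{Kmp}(X)$ and that $e_1^{\circ}\cdot\hat{j}$ is a $V$-distributor) are precisely the content of the paper's preparatory lemma, established there via Corollary \ref{DistFun} and the identity $e_1^{\circ}\cdot\hat{j}=j$.
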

Before we prove this proposition we state a useful lemma.
\begin{lem}
		Let $f : (X,a) \rightarrow (Y,b \cdot \alpha)$ be an $(L,V)$-functor, with $(Y, b \cdot \alpha)$ representable, and let $j : X \kmodto E$ be an $(L,V)$-distributor. Then $\alpha Lf : \mathtt{Kmp}(X) \rightarrow (Y,b)$ is a $V$-functor and $e_1^{\circ} \cdot\hat{j}$ is a $V$-distributor.
\end{lem}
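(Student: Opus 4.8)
The plan is to make both assertions explicit by unwinding the hom of $\mathtt{Kmp}(X)$, i.e. of the $V$-category $(LX, La\cdot m_X^{\circ})$ underlying $M(X)$. Writing $A := La\cdot m_X^{\circ}$ and unfolding the two Kleisli ingredients $m_X^{\circ}$ and the extension $La$, one finds that $A$ is a supremum over concatenation-partitions:
\[
A(\underline{x},\underline{y}) \;=\; \bigvee_{\underline{x}=(\underline{w}_1;\cdots;\underline{w}_q)} a(\underline{w}_1,y_1)\otimes\cdots\otimes a(\underline{w}_q,y_q),
\]
the supremum ranging over all ways of cutting $\underline{x}$ into $q$ consecutive (possibly empty) blocks, where $\underline{y}=(y_1,\ldots,y_q)$. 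This formula is the single computation on which both claims rest, so I would establish it first.

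For the first claim I would verify the $V$-functor inequality $A(\underline{x},\underline{y})\le b\big((\alpha Lf)(\underline{x}),(\alpha Lf)(\underline{y})\big)$ partition by partition. Fix a cut $\underline{x}=(\underline{w}_1;\cdots;\underline{w}_q)$. Since $f$ is an $(L,V)$-functor, applying its defining inequality to each block gives $a(\underline{w}_i,y_i)\le b\big(\alpha(Lf(\underline{w}_i)),f(y_i)\big)$; tensoring these together and using that $\mult$ is a $V$-functor (iterated) collapses the right-hand side to $b\big(\alpha(Lf(\underline{w}_1))\mult\cdots\mult\alpha(Lf(\underline{w}_q)),\,f(y_1)\mult\cdots\mult f(y_q)\big)$. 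Associativity of $\mult$, together with the fact that the blocks concatenate to $\underline{x}$, identifies the first argument with $(\alpha Lf)(\underline{x})$ and the second with $(\alpha Lf)(\underline{y})$. Taking the supremum over all partitions yields the required inequality, so $\alpha Lf$ is a $V$-functor.

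For the second claim I would first simplify $e_1^{\circ}\cdot\hat{j}$. With $\hat{j}=Lj\cdot m_X^{\circ}:LX\xslashedrightarrow{}L(1)$, composing with $e_1^{\circ}$ annihilates every list in $L(1)$ except the one-element list, so $e_1^{\circ}\cdot\hat{j}$ is simply $j$ regarded as a $V$-relation $LX\xslashedrightarrow{}1$, i.e. a presheaf on $\mathtt{Kmp}(X)$. It then remains to check the two distributor conditions, against $\mathtt{Kmp}(X)$ on the left and the one-point $V$-category $K$ on the right. The $K$-side condition is immediate since $k$ acts as a unit. For the left-hand condition $j\cdot A\le j$, the decisive observation is that $j\cdot A$ coincides with the Kleisli composite $j\circ a$: using the partition formula for $A$ and distributivity of $\otimes$ over $\bigvee$, both unfold, via the same $m_X^{\circ}$ and $La$, to $\bigvee_{\underline{y}}\big(\bigvee_{\text{partitions}} a(\underline{w}_1,y_1)\otimes\cdots\otimes a(\underline{w}_q,y_q)\big)\otimes j(\underline{y})$. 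Hence $j\cdot A=j\circ a\le j$ by the $(L,V)$-distributor property of $j$, and $e_1^{\circ}\cdot\hat{j}$ is a $V$-distributor.

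The only genuine obstacle is bookkeeping: correctly unfolding the extension $La$ and the list-monad multiplication $m_X$ into the partition formula, and recognizing that the $V$-composite $j\cdot A$ and the $(L,V)$-composite $j\circ a$ are literally the same partition sum. Once the partition description of the $\mathtt{Kmp}(X)$-hom is in hand, both halves follow from, respectively, functoriality and associativity of $\mult$ and the defining inequality of $j$. Conceptually, the second claim is the statement that the adjunction $M\dashv\mathtt{Kmp}$ transports $(L,V)$-distributors $X\kmodto E$ to $V$-distributors $\mathtt{Kmp}(X)\xslashedrightarrow{}K$, which I would keep in mind as a sanity check.
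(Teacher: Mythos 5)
Your proposal is correct, and it splits into two halves of different character when compared with the paper. For the second claim you follow essentially the paper's own route: the paper likewise notes that a quick calculation gives $e_1^{\circ}\cdot\hat{j}=j$ and then concludes from $j\cdot La\cdot m_X^{\circ}=j\circ a\leq j$; note that your identification of $j\cdot A$ with $j\circ a$ needs no partition bookkeeping at all, since $A=La\cdot m_X^{\circ}$ makes it an instance of associativity of composition in $\Mat{V}$. For the first claim your route genuinely differs. The paper never unfolds the hom of $(LX, La\cdot m_X^{\circ})$: it observes that, $f$ being an $(L,V)$-functor, $f_{\circledast}=b\cdot\alpha Lf=(\alpha Lf)_{*}$ is an $(L,V)$-distributor, whence $(\alpha Lf)_{*}\cdot La\cdot m_X^{\circ}\leq (\alpha Lf)_{*}$; combined with $b\cdot b\cdot \alpha Lf\leq b\cdot\alpha Lf$ this exhibits $(\alpha Lf)_{*}$ as a $V$-distributor, and Corollary \ref{DistFun} then delivers $V$-functoriality of $\alpha Lf$. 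Your partition-by-partition verification proves the same inequality directly and is sound (including the empty-block case, where the unit $u_Y$ absorbs correctly), at the cost of the combinatorial bookkeeping you acknowledge, but with the benefit of making explicit where representability actually enters: functoriality of $\mult$ as a $V$-functor $Y\boxtimes Y\rightarrow Y$, associativity, and the unit. The paper's argument hides all of this inside the equipment fact $f_{\circledast}\circ a\leq f_{\circledast}$ and the single inequality $b\cdot b\leq b$, so it is shorter and purely relational, while yours is self-contained and in effect re-proves, by hand and for this special shape, the $(L,V)$-analogue of Theorem \ref{Pro}. Your closing sanity check via $M\dashv\mathtt{Kmp}$ matches the paper's intent exactly.
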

\begin{proof}
	Since $f$ is an $(L,V)$-functor, $f_{\circledast } = b \cdot \alpha Lf = (\alpha Lf)_{*}$ is an $(L,V)$-distributor, hence
	$$ f_{\circledast } \cdot La \cdot m_X^{\circ} \leq f_{\circledast };$$
	moreover, because $ b \cdot b \leq b$, we have also
	$$ b \cdot  b \cdot \alpha Lf \leq b \cdot \alpha Lf,$$
	from which it follows that $(\alpha Lf)_{*}$ is a $V$-distributor, hence, from Corollary \ref{DistFun}, it follows that $\alpha Lf $ is a $V$-functor. \\
	A quick calculation shows that $e_1^{\circ} \cdot \hat{j} = j$; moreover, since $j$ is an $(L,V)$-distributor, we have
	$$ e_1^{\circ} \cdot \hat{j} \cdot La \cdot m_X^{\circ} = j \cdot La \cdot m_X^{\circ} = j \circ a \leq j.$$
	This shows that $e_1^{\circ} \cdot \hat{j} : LX \kmodto E$ is a $V$-distributor. \\
\end{proof}
\begin{proof}(Of the proposition)
	We have
	\begin{alignat*}{2}
	 \tilde{b}( e_Y(\Tcolim{L}(j,f)(1)),c)& = b(\Tcolim{L}(j,f)(1) ,c) \\
	    & = \bigwedge_{\underline{x} \in LX} [\hat{j}(\underline{x},e_1(1)), b(\alpha Lf(\underline{x}),c)] \\
	    &= \bigwedge_{\underline{x} \in LX} [j(\underline{x}), b(\alpha Lf(\underline{x}),c)] \\
	    & =  b(\colim(e_1^{\circ} \cdot\hat{j} , \alpha Lf), c).
	\end{alignat*}
\end{proof}
\begin{cor}
	Let $f : (X,a) \rightarrow (Y,b \cdot \alpha)$ be an $(L,V)$-functor, with $(Y, b \cdot \alpha)$ representable, and let $j : X \kmodto E$ be an $(L,V)$-distributor. Then
	$$\tilde{b}(e_Y(\Tcolim{L}(j,f)(1)) , c) = \limit(e_1^{\circ} \cdot\hat{j}, b(\alpha Lf(-) ,c)).$$
\end{cor}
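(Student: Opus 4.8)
The plan is to obtain this identity as an immediate consequence of the Proposition together with the general fact, recorded earlier in Example~\ref{ColLims}(\ref{Inv}), that a $V$-functor of the form $b(-,c)$ turns weighted colimits into weighted limits. Concretely, the Proposition already gives
$$\tilde{b}(e_Y(\Tcolim{L}(j,f)(1)),c) = b(\colim(e_1^{\circ} \cdot\hat{j},\alpha Lf),c),$$
so the only thing left to do is to rewrite the right-hand side as a weighted limit. First I would invoke the preceding Lemma to guarantee that we are in a legitimate $V$-enriched situation: $\alpha Lf : \mathtt{Kmp}(X) \rightarrow (Y,b)$ is a genuine $V$-functor and $e_1^{\circ} \cdot \hat{j}$ is a genuine $V$-distributor, so that both the colimit $\colim(e_1^{\circ} \cdot \hat{j},\alpha Lf)$ appearing in the Proposition and the limit we want to produce live in the ordinary $V$-categorical world, where the full theory of weighted (co)limits from Section~2 applies.

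Next I would apply the inversion principle of Example~\ref{ColLims}(\ref{Inv}) to the continuous-in-the-opposite-sense $V$-functor $a(-,x)$, here instantiated as $b(-,c) : (Y,b)^{\op} \rightarrow V$. That principle states that $b(-,c)$ sends a colimit weighted by a distributor to the limit weighted by the involuted distributor, i.e. $b(\colim(w,g),c) = \limit(w^{\circ}, b(-,c) \circ g)$ for fitting $w$ and $g$. Taking $w = e_1^{\circ} \cdot \hat{j}$ and $g = \alpha Lf$, and unravelling $b(-,c) \circ \alpha Lf$ as the $V$-functor $b(\alpha Lf(-),c)$, this yields exactly
$$b(\colim(e_1^{\circ} \cdot\hat{j},\alpha Lf),c) = \limit(e_1^{\circ} \cdot\hat{j}, b(\alpha Lf(-),c)),$$
where I am using that the relevant distributor $e_1^{\circ} \cdot \hat{j} : LX \kmodto E$ has domain and codomain for which the involution reproduces the same weighting data (the weight being essentially a presheaf-type distributor into the one-point object). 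Chaining this with the Proposition gives the claimed formula.

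The main subtlety—rather than a genuine obstacle—is bookkeeping about which involuted distributor appears and in which variance, since the statement writes $e_1^{\circ} \cdot \hat{j}$ on both sides without an explicit $(-)^{\circ}$. I would therefore check carefully that, for the weight $e_1^{\circ} \cdot \hat{j}$ viewed as a distributor into $E$, applying the inversion of Example~\ref{ColLims}(\ref{Inv}) does not change the symbol one writes for the weight, because the direction of the distributor and the one-point target make the involution act trivially on the data that is being displayed. Once this variance check is done, no further computation is needed: the Corollary is just the Proposition post-composed with the contravariant hom-functor $b(-,c)$, read through the colimit-to-limit inversion.
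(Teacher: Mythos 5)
Your proposal is correct and takes essentially the same route as the paper, whose entire proof is the one-line observation that the corollary follows from Proposition \ref{LCol} combined with the colimit-to-limit inversion of Example \ref{Inv} of Examples \ref{ColLims}, exactly the two steps you perform. Your additional variance check---that the involution $(-)^{\circ}$ changes nothing in the displayed weight $e_1^{\circ}\cdot\hat{j}$ because it is a presheaf-type distributor into the one-point category $E$---merely makes explicit a notational abuse the paper leaves silent.
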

\begin{proof}
	The assertion follows from Example \ref{Inv} of Examples \ref{ColLims}. \\
\end{proof}
\begin{cor}\label{Deco}
	Let $f : (X,a) \rightarrow (Y,b \cdot \alpha)$ be an $(L,V)$-functor, with $(Y, b \cdot \alpha)$ representable, and let $j : X \kmodto E$ be an $(L,V)$-distributor. Then (if it exists) $\Tcolim{L}(j,f)(1) \simeq \bigvee_{\underline{x} \in LX} j(\underline{x}) \odot \alpha Lf(\underline{x}).$ Where the conical colimit and the copower are taken in the underlying $V$-category $(Y,b)$.
\end{cor}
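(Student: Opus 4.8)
The plan is to read this off as an immediate consequence of Proposition \ref{LCol} together with the decomposition of weighted colimits into conical suprema of copowers recorded in Remark \ref{Deco1}. Proposition \ref{LCol} reduces the $(L,V)$-colimit to an ordinary $V$-weighted colimit in the underlying $V$-category $(Y,b)$, and the decomposition then rewrites that $V$-colimit in the stated form.

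Concretely, first I would invoke Proposition \ref{LCol} and its preceding lemma to obtain
$$\Tcolim{L}(j,f)(1) \simeq \colim(e_1^{\circ} \cdot \hat{j}, \alpha Lf),$$
where $\alpha Lf : \mathtt{Kmp}(X) \to (Y,b)$ is a genuine $V$-functor and $e_1^{\circ} \cdot \hat{j} = j$ is a $V$-distributor from the underlying $V$-category $(LX, La \cdot m_X^{\circ})$ to the one-point $V$-category $K$. Since the weight $e_1^{\circ} \cdot \hat{j}$, viewed as a $V$-distributor into $K$, is nothing but the function $\underline{x} \mapsto j(\underline{x})$, applying the decomposition of Remark \ref{Deco1} to this $V$-colimit yields
$$\colim(j, \alpha Lf)(1) \simeq \bigvee_{\underline{x} \in LX} j(\underline{x}) \odot \alpha Lf(\underline{x}),$$
with the copowers and the conical supremum formed in $(Y,b)$, which is exactly the claim.

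If one prefers to avoid assuming $(Y,b)$ copowered a priori, the same conclusion can be checked directly against co-representables: from the computation in the proof of Proposition \ref{LCol} one has $b(\Tcolim{L}(j,f)(1), c) = \bigwedge_{\underline{x} \in LX} [j(\underline{x}), b(\alpha Lf(\underline{x}), c)]$ for every $c \in Y$, while the universal property of the copower, $b(y \odot u, c) = [u, b(y,c)]$, together with that of the conical supremum, $b(\bigvee_i y_i, c) = \bigwedge_i b(y_i, c)$, gives $b(\bigvee_{\underline{x}} j(\underline{x}) \odot \alpha Lf(\underline{x}), c) = \bigwedge_{\underline{x}} [j(\underline{x}), b(\alpha Lf(\underline{x}), c)]$; the two agree for all $c$, so the claimed isomorphism follows from full faithfulness of the co-Yoneda embedding $\CoYo{Y}$. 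The only real bookkeeping is the identification $e_1^{\circ} \cdot \hat{j} = j$ (already in the lemma) and the care needed to keep the copowers and conical suprema in $(Y,b)$ rather than in the representable $(L,V)$-category $(Y, b \cdot \alpha)$; beyond that the argument is routine.
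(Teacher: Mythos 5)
Your proof is correct and follows exactly the paper's route: the paper's own proof is a one-line appeal to Remark \ref{Deco1}, applied after Proposition \ref{LCol} (together with its preceding lemma, which identifies $e_1^{\circ} \cdot \hat{j} = j$) has reduced the $(L,V)$-colimit to the $V$-colimit $\colim(e_1^{\circ} \cdot \hat{j}, \alpha Lf)$ in $(Y,b)$. Your closing verification against co-representables merely spells out what the paper leaves implicit (including the copoweredness caveat), so it is added precision rather than a different argument.
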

\begin{proof}
	The assertion follows from the discussion on weighted colimits done in Remark \ref{Deco1}. \\
\end{proof}
\begin{defin}
	Let $f : (X,a) \rightarrow (Y,b)$ and $g : (Y,b) \rightarrow (X,a)$ be two $(L,V)$-functors; $f$ is \textit{left adjoint} to $g$ (vice versa $g$ is \textit{right adjoint} to $f$) and denoted by $f \dashv g$, if $f_{\circledast} \simeq g^{\circledast}$. In pointwise terms, this means that, for all $\underline{x},y \in LX,Y,$
	$$ b(Lf(\underline{x}),y) = a(\underline{x},g(y)).$$
\end{defin}
\begin{Exs}
	\begin{enumerate}
		\item Every $(L,V)$-functor $f : (X,a) \rightarrow (Y,b)$ defines an adjoint pair of $(L,V)$-functors
		\[
		\begin{tikzcd}
		\mathbb{D}_L(X) \ar[r,bend left,"\mathbb{D}(f) = - \circ f^{\circledast}",""{name=A, below}] & \mathbb{D}_L(Y). \ar[l,bend left,"- \circ f_{\circledast}",""{name=B,above}] \ar[from=A, to=B, symbol=\dashv]
		\end{tikzcd}
		\]
		\item Given an $(L,V)$-functor $f : (X,a) \rightarrow (Y,b)$, with $(Y,b)$ cocomplete, we can consider the following functor, where $\Yo{X} : X \rightarrow \mathbb{D}_L(X)$ is the Yoneda embedding we introduced in Remark \ref{MultiYoneda},
		$$ \mathtt{Lan}_y(f) = \Tcolim{L}(y_{\circledast}, f)(1) : \mathbb{D}_L(X)  \rightarrow Y,$$
		which is called the \textit{left Kan extension of $f$ along $\Yo{X}$}. One can show that $\mathtt{Lan}_y(f)$ has a right adjoint $R$ given by
		$$ R(y) = b(Lf(-),y) : Y \rightarrow \mathbb{D}_L(X).$$
	\end{enumerate}
\end{Exs}
Let $f: (X,a)\rightarrow (Y,b)$ and $g: (Y,b) \rightarrow (Z,c)$ be two $(L,V)$-functors and let $j : X\kmodto D$ be a distributor. From the definition of weighted colimit it follows that
$\Tcolim{L}(j, gf) \leq g(\Tcolim{L}(j,f))$. \\
If the equality holds, we say that $g$ preserves the colimit of $f$ weighted by $h$. If $g$ preserves all colimits, we say that $g$ is \textit{cocontinuous}. As in the $V$-case, we have that left adjoint $(L,V)$-functors are cocontinuous.
\section{(Enriched)-Quantales and Lax Monoidal Monads}
In this section we define the enriched counterparts of quantales, we state some of their properties and we introduce lax monoidal monads.\par\medskip
It is well known that quantales are monoids in the monoidal category $\mathtt{Sup}$ of sup-lattices, where the monoidal structure on $\mathtt{Sup}$ is the one induced by the \textit{strong-commutative power set monad} (see \cite{EGTG} for details).
Since $\mathtt{Sup}$ is equivalent to the category of (separated) cocomplete, for the quantale $\mathbf{2}=\{0,1\}$, $\mathbf{2}$-categories (with cocontinuous $\mathbf{2}$-functors as arrows), it is natural to define a $V$-quantale as a monoid in the category of cocomplete $V$-categories, where the monoidal structure is the one induced by the strong commutative monad $(\Pow{V}, u, n)$ (the $V$-powerset monad). Here
$\Pow{V} : \Sets \rightarrow \Sets$ is defined by $\Pow{V}(X) =  V^X$ and, for $f: X \rightarrow Y$ and $\phi \in V^X$,
$$ \Pow{V}(f)(\phi)(y) =  \bigvee_{x \in f^{-1}(y) }\phi(x).$$
Moreover:
\begin{itemize}
	\item the unit $u_X : X \rightarrow V^X$ is the transpose of the diagonal $\bigtriangleup_X : X \times X \rightarrow V$;
	\item the multiplication $n_X : \Pow{V}(\Pow{V}(X)) \rightarrow \Pow{V}(X)$ is defined by $n_X(\Phi)(x) =  \bigvee_{\phi \in V^X} \Phi(\phi) \otimes \phi(x)$.
\end{itemize}
This motivates the following definition.
\begin{defin}
	A $V$-quantale $(Q, \mult, k)$ is a monoid in the monoidal category $\Alg{V}$.
\end{defin}
\begin{re}\label{LimpRimp}
Just as $\mathtt{Sup}$ is equivalent to the category of separated cocomplete $\mathbf{2}$-categories, $\Alg{V}$, the $V$-enriched version of $\mathtt{Sup}$, is equivalent to the category of separated cocomplete $V$-categories with cocontinuous $V$-functors. Thus, a $V$-quantale $(Q, \mult, k)$ is a cocomplete monoidal $V$-category such that the multiplication $\mult$ is cocontinuous in each variable. That is to say, given an element $x \in Q$, $ - \mult x : Q \rightarrow Q$ and $ x \mult = : Q \rightarrow Q$ are two cocontinuous $V$-functors. Since $Q$ is cocomplete, it follows that both functors have right adjoints which are given by
	$$  y  \rimp (=) =  \colim(Q(- \mult y, =) , \Id), \ \ x \limp (=) = \colim(Q(x \mult =, =) ; \Id),$$
	and which can be re-written in pointwise terms as
	$$ y \rimp z = \bigvee_{x\in X} Q(x\mult y,z ) \odot x, \ \ x \limp z = \bigvee_{y\in Y} Q(x \mult y, z) \odot y.$$
	Notice that in the ordered case, the last two expressions correspond to the familiar notion of \textit{left} and \textit{right implication} respectively.
\end{re}
Recall that a $V$-functor $f : (X,a, \mult_X, k_X) \rightarrow (Y,b, \mult_Y, k_Y)$ between monoidal $V$-categories is called \textit{lax monoidal} if
$$  k_Y \leq f(k_X), \ \ f(x) \mult_Y f(z) \leq f(x \mult_X z).$$
Let $\mathtt{Mon}(V \mbox{-}\mathtt{Cat})_{{\mathtt{lax}}}$ be the category formed by monoidal $V$-categories with lax monoidal functors between them.
\begin{re}
Notice that $\mathtt{Mon}(\mathbf{2} \mbox{-}\mathtt{Cat})_{{\mathtt{lax}}}$ is related to the category of ordered monoids and submultiplicative order-preserving maps considered in \cite{InjHulls}. The only difference is that here we consider ordered monoids whose underlying ordered sets are not separated (i.e. they are not partially ordered).
\end{re}
\begin{prop} \label{Fully-Faithful}
	$\mathtt{Kmp} : \mathtt{Mon}(V \mbox{-}\mathtt{Cat})_{\mathtt{lax}} \rightarrow \Multi{V}$ is fully faithful. Here $\mathtt{Kmp}$ is the functor defined as in Remark \ref{Mon}.
\end{prop}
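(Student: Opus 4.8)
The plan is to exploit that $\mathtt{Kmp}$ is the identity on underlying functions, so that faithfulness is immediate: a lax monoidal $V$-functor carries no data beyond its underlying map---the unit and multiplicativity requirements are mere inequalities---hence two such functors with the same underlying function coincide. The substance of the statement is therefore fullness, together with the (routine but necessary) verification that $\mathtt{Kmp}$ is well defined on morphisms. Writing $\mathtt{Kmp}(X, a, \mult_X, u_X) = (X, \hat{a})$ with $\hat{a}(\underline{x}, y) = a(x_1 \mult_X \cdots \mult_X x_n, y)$, both tasks amount to comparing the single inequality defining an $(L,V)$-functor $h$,
$$a(x_1 \mult_X \cdots \mult_X x_n, y) \leq b(h(x_1) \mult_Y \cdots \mult_Y h(x_n), h(y)),$$
with the three inequalities defining a lax monoidal functor.

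For fullness I would evaluate the $(L,V)$-functor inequality on short lists. The singleton $\underline{x} = (x)$ gives $a(x,y) \leq b(h(x), h(y))$, so $h$ is a $V$-functor. The empty list with $y = u_X$ gives $a(u_X, u_X) \leq b(u_Y, h(u_X))$; since $k \leq a(u_X, u_X)$ by reflexivity of the $V$-category, this forces $k \leq b(u_Y, h(u_X))$, the unit condition. The list $\underline{x} = (x,z)$ with $y = x \mult_X z$ gives, again by reflexivity, $k \leq b(h(x) \mult_Y h(z), h(x \mult_X z))$, the multiplicativity condition. Hence every $(L,V)$-functor between the images is lax monoidal.

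For well-definedness I would run the converse. The key lemma is the iterated inequality $f(x_1) \mult_Y \cdots \mult_Y f(x_n) \leq f(x_1 \mult_X \cdots \mult_X x_n)$, proved by induction on $n$ from the binary multiplicativity condition together with monotonicity of $\mult_Y$ (available since $\mult_Y$ is a $V$-functor), with base case $n=0$ being exactly the unit condition. Setting $p = x_1 \mult_X \cdots \mult_X x_n$ and $q = f(x_1) \mult_Y \cdots \mult_Y f(x_n)$, I would then chain $a(p,y) \leq b(f(p), f(y))$ (from $V$-functoriality), $k \leq b(q, f(p))$ (the iterated inequality), and the composition law $b \cdot b \leq b$ to deduce $a(p,y) \leq b(q, f(y))$, which is the $(L,V)$-functor inequality.

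Since faithfulness and fullness give a bijection on hom-sets, $\mathtt{Kmp}$ is fully faithful; I would additionally note that it is locally fully faithful, because the order on $(L,V)$-functors is tested on singletons, $f \leq g$ iff $k \leq \bigwedge_{x} \hat{b}((f(x)), g(x)) = \bigwedge_{x} b(f(x), g(x))$, which is precisely the order on $V$-functors. The only step demanding real care is the inductive proof of the iterated multiplicativity inequality, where one must correctly use the functoriality of $\mult_Y$ to push the inductive hypothesis through a tensor factor; the remainder is a direct unwinding of the definitions.
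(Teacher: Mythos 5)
Your proposal is correct and takes essentially the same route as the paper: fullness is obtained by specializing the $(L,V)$-functor inequality $\tilde{a}(\underline{x},y)\leq \tilde{b}(Lh(\underline{x}),h(y))$ to the empty list with $y=u_X$ (unit condition, via $k\leq a(u_X,u_X)$), to singletons ($V$-functoriality), and to a list with target the product of its entries (multiplicativity, again via reflexivity), while faithfulness is immediate since $\mathtt{Kmp}$ is the identity on underlying functions. Your additional verification that $\mathtt{Kmp}$ is well defined on lax monoidal morphisms---the inductive iterated inequality $f(x_1)\mult_Y\cdots\mult_Y f(x_n)\leq f(x_1\mult_X\cdots\mult_X x_n)$ combined with $b\cdot b\leq b$---is left implicit in the paper, so this is added completeness rather than a divergence.
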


\begin{proof}
	The fact that $\mathtt{Kmp}$ is faithful is straightforward.\\
	Let $f : \mathtt{Kmp}(X) \rightarrow \mathtt{Kmp}(Y)$ be an $(L,V)$-functor and let $\tilde{a}$, $\tilde{b}$ be the $(L,V)$-structures on $\mathtt{Kmp}(X)$ and $\mathtt{Kmp}(Y)$, respectively. By definition we have that
	$$\tilde{a}(\underline{x}, y) \leq \tilde{b}(Lf(\underline{x}),f(y)), \ \ \mbox{ where } Lf(\underline{x}) = (f(x_1), ... ,f(x_n)),$$
	which by definition implies
	$$ a(x_1 \mult_X ...  \mult_X x_n, y ) \leq b(f(x_1)\mult_Y... \mult_Y f(x_n), f(y)).$$
	If we take $y = x_1 \mult_X ...  \mult_X x_n$, from $ k \leq a(x,x)$ it follows that
	$$f(x) \mult_Y f(y) \leq f(x \mult_X y).$$
	Finally, if we take as first argument the empty list, we have
	$$ k_Y \leq f(k_X).$$
\end{proof}
\begin{re}
When $V = \mathbf{2}$, this proposition sheds light on the definition of embedding between ordered monoids contained in \cite{InjHulls}. There, a submultiplicative map (lax monoidal in our notation) $f : X \rightarrow Y$ is called an embedding if, for all $x_1,...,x_n,x  \in X$,
$$ f(x_1),...,f(x_n) \leq f(x) \mbox{ implies } x_1,...,x_n \leq x.$$
This condition corresponds to $f$ being fully faithful as an $(L,V)$-functor. This suggests that $\Multi{V}$ provides the common roof we were searching for.
\end{re}
By relaxing the definition of $V$-quantale we can define another useful category to which the ideas will apply that we are going to develop in the following sections.
\begin{defin}
	A monoidal $V$-category $(X,a, \mult_X, k_X)$ is called \textit{residuated} if, for every $x \in X,$ $ x \mult_X (=)$ and $(-)\mult_X x$ have right adjoints (in $\Vcat$) $x \rimp(=)$ and $x \limp (=)$, respectively. The category formed by residuated monoidal $V$-categories with lax monoidal functors between them is denoted by $\mathtt{ResMon}(V \mbox{-}\mathtt{Cat})_{\mathtt{lax}}. $
\end{defin}
\begin{re}
When $V=\mathbf{2}$, a residuated monoidal category is a residuated monoid. These are closely related to residuated lattices, except that we do not require the existence of finite products/coproducts (that is to say, we do not require our order relation to come from a lattice).
\end{re}
\begin{re}
	From the definition it is clear that $V$-quantales form a full subcategory of both $\mathtt{ResMon}(V \mbox{-}\mathtt{Cat})_{\mathtt{lax}} $ and $\mathtt{Mon}(V \mbox{-}\mathtt{Cat})_{\mathtt{lax}}$.
\end{re}
\begin{re}
	Notice that a morphism $f$ in $\mathtt{ResMon}(V \mbox{-}\mathtt{Cat})_{\mathtt{lax}} $ satisfies
	$$ f(y \rimp z) \leq f(y) \rimp f(z), \ \  f(y \limp z) \leq f(y) \limp f(z).$$
	Indeed, from
	$$ f(y \rimp z)  \mult f(y) \leq f((y \rimp z )\mult y) \mbox{ and } (y \rimp z )\mult y \leq z$$
	it follows that
	$$f(y \rimp z) \leq f(y) \rimp f(z).$$
	In the same way, but starting from
	$$  f(y) \mult  f(y \limp z) \leq f( y \mult (y \limp z)),$$
	we get
	$$ f(y \limp z) \leq f(y) \limp f(z).$$
\end{re}
\begin{defin}
	Given a $V$-quantale $Q$, a \textit{lax monoidal monad} $T: Q \rightarrow Q$ is a lax monoidal functor such that:
	\begin{itemize}
		\item  $k_Q \leq T(k_Q);$
		\item $T^2 = T.$
	\end{itemize}
\end{defin}
\begin{teorema}
	Let $Q$ be a $V$-quantale and $T : Q \rightarrow Q$ be lax monoidal monad. Then
	$$Q^T = \{x \in Q \mbox{ such that } T(x) = x \},$$
	is a $V$-quantale; moreover the inclusion $Q^T \rightarrow Q$ is a $V$-functor that has a left adjoint $\pi_T$ which is a morphism of $V$-quantales.
\end{teorema}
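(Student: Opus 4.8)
The plan is to realise $Q^T$ as a reflective sub-$V$-category of $Q$ whose reflector is $T$ itself, and then to transport the quantale structure of $Q$ across the reflection, in the spirit of the quantic-nucleus construction of \cite{InjHulls}. Being a monad, $T$ carries a unit, which in the order-enriched setting reads $x \leq T(x)$ for all $x \in Q$; together with $T^2 = T$ this makes $T$ an idempotent closure operator, while lax monoidality supplies $k_Q \leq T(k_Q)$ and $T(x) \mult T(y) \leq T(x \mult y)$. I will use these three ingredients throughout.

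First I would build the adjunction. Write $i : Q^T \rightarrow Q$ for the full inclusion and $\pi_T : Q \rightarrow Q^T$ for $T$ corestricted to its image (well defined since $T^2 = T$ forces $T(x)$ to be a fixed point). The crucial step is the identity $Q(T x, y) = Q(x,y)$ for all $x \in Q$ and $y \in Q^T$: using $k \leq Q(x, Tx)$ together with the composition law $Q(x,Tx) \otimes Q(Tx, y) \leq Q(x,y)$ gives $Q(Tx,y) \leq Q(x,y)$, while $Q(x,y) \leq Q(Tx, Ty) = Q(Tx,y)$ follows from $T$ being a $V$-functor and $Ty = y$. By the pointwise criterion for adjunctions this says exactly $\pi_T \dashv i$, so $Q^T$ is reflective in $Q$. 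Since $Q$ is cocomplete, $Q^T$ is cocomplete as well, with weighted colimits computed by applying $\pi_T$ to the colimit taken in $Q$ (the universal property following from $\pi_T \dashv i$ and the full faithfulness of $i$); moreover, being a full sub-$V$-category of the separated $V$-category $Q$, it is again separated.

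Next I would transport the multiplication, setting $x \bullet y := T(i x \mult i y)$ with unit $T(k_Q)$. Everything reduces to the interaction identities $T(Ta \mult Tb) = T(Ta \mult b) = T(a \mult Tb) = T(a \mult b)$, each obtained by sandwiching between $a \leq Ta$, $b \leq Tb$, lax monoidality $Ta \mult Tb \leq T(a \mult b)$, and $T^2 = T$. These give associativity of $\bullet$ (slide the $T$'s in and out, then use associativity of $\mult$) and the unit laws for $T(k_Q)$: for $x \in Q^T$, from $x = k_Q \mult x \leq T(k_Q) \mult x$ one gets $x \leq T(T(k_Q) \mult x)$, while $T(k_Q) \mult x = T(k_Q) \mult T(x) \leq T(k_Q \mult x) = T(x) = x$ gives $T(T(k_Q) \mult x) \leq x$; separatedness of $Q^T$ turns these isomorphisms into equalities. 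Hence $(Q^T, \bullet, T(k_Q))$ is a monoidal $V$-category.

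It then remains to check that $\bullet$ is cocontinuous in each variable, which by Remark \ref{LimpRimp} promotes $Q^T$ to a $V$-quantale. Here I combine the cocontinuity of $\pi_T$ (a left adjoint, Example \ref{ColLims}), the cocontinuity of $\mult$ on $Q$, and the identity $T(a \mult Tb) = T(a \mult b)$ to push $\pi_T$ through the multiplication; a short chain then yields $x \bullet \colim(j,f) \simeq \colim(j, x \bullet f)$ for every weighted colimit, and symmetrically in the other variable. Finally $\pi_T$ is a morphism of $V$-quantales: it is cocontinuous, and it is strong monoidal since $\pi_T(x \mult y) = T(x \mult y) = T(Tx \mult Ty) = \pi_T(x) \bullet \pi_T(y)$ and $\pi_T(k_Q) = T(k_Q)$ is the unit of $Q^T$. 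I expect the main obstacle to be the bookkeeping of the transported monoidal structure, chiefly the unit laws for $T(k_Q)$ and the cocontinuity of $\bullet$, although, as indicated, all of it collapses onto the elementary identities relating $T$, $\mult$, and idempotency.
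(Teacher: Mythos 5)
Your proposal is correct and takes essentially the same route as the paper, whose proof is a one-liner invoking the classical quantic-nucleus argument and noting that the $V$-structure on $Q^T$ is induced from $Q$ with $\pi_T = T$; your write-up simply supplies the details left implicit there (the adjunction $\pi_T \dashv i$ via $Q(Tx,y)=Q(x,y)$, the transported multiplication $T(x \mult y)$ with unit $T(k_Q)$, and cocontinuity of the transported structure). You also correctly read the monad unit $x \leq T(x)$ into the paper's terse definition, and correctly make $\pi_T$ the \emph{left} adjoint to the inclusion, as in the statement (the paper's proof text says ``right adjoint'', a slip).
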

\begin{proof}
	The proof is a straightforward generalization of the corresponding proof for quantic nuclei. The $V$-structure on $Q^T$ is the one induced by $Q$ while the right adjoint to the inclusion functor is $\pi_T(x) =T(x)$.\\
\end{proof}
\section{Intermezzo}\label{Intermezzo}
In this section we recall how injectives and injective hulls are built in the category of partially ordered sets and monotone maps between them. We show how the same ideas apply to the enriched case, for a general commutative quantale $V$.\par\medskip
Recall that in a category $C$, an object $X$ is called \textit{injective} (with respect to a class of morphisms $J$), if for every diagram of the form
\[
\begin{tikzcd}
Y \arrow[r] \arrow[d] & X \\
Z
\end{tikzcd}
\]
where $Y \rightarrow Z$ is in $J$, there exists an extension
\[
\begin{tikzcd}
Y \arrow[r] \arrow[d]& X \\
Z \arrow[ur, dotted]
\end{tikzcd}
\]
that makes the diagram commute.\\
Given an object $Y$, $E$ is called an \textit{injective hull} for $Y$ (with respect to a class of morphisms $J$) if there exists an arrow $e : Y \rightarrow E$ in $J$ such that:
\begin{itemize}
	\item $E$ is $J$-injective;
	\item $e$ is \textit{essential}: for every $f : E \rightarrow X$,  $fe \in J$ implies $f \in J$.
\end{itemize}
Let $(X,\leq)$ be a partially ordered set. Recall that we have two embeddings:
$$\downarrow :  X \rightarrow \mathbb{D}(X), \ \ x \mapsto \downarrow x =\{w \in X, \ \ w\leq x \},$$
$$\uparrow :  X \rightarrow \mathbb{U}(X), \ \ x\mapsto \uparrow x =\{w\in X, \ \ x\leq w\}.$$
It is well known that there exists an adjunction
\[
\begin{tikzcd}
\mathbb{D}(X) \ar[r,bend left,"L",""{name=A, below}] & \mathbb{U}(X), \ar[l,bend left,"R",""{name=B,above}] \ar[from=A, to=B, symbol=\dashv]
\end{tikzcd}
\]
where, for $j\in \mathbb{D}(X)$, $l \in \mathbb{U}(X)$, one has
$$L(j)= \{x \in X \mbox{ : } \forall_{w \in X} w\in j \Rightarrow w \leq x\}, \ \ R(l)= \{x \in X \mbox{ : } \forall_{w \in X} w\in l \Rightarrow x\leq w\}.$$
This adjunction induces a monad (closure operator) $T$ on $\mathbb{D}(X)$, defined, for $j\in \mathbb{D}(X)$, as follows:
\begin{alignat*}{2}
T(j) & = \{x \in X \mbox{ : } \forall_{y \in X} y\in L(j) \Rightarrow x\leq y\} \\
& = \{x \in X \mbox{ : } \forall_{y \in X} (\forall_{w \in X} w\in j \Rightarrow w \leq y ) \Rightarrow x\leq y\} \\
& = \bigwedge_{\{y \mbox{ : }\forall_{w \in X} w\in j \Rightarrow w \leq y\}  } \downarrow y\\
& = \bigwedge_{\{y \mbox{ : } j \leq \downarrow y\}  } \downarrow y.
\end{alignat*}
In particular, it is immediate from the definition that, for all $x \in X$,
$$T(\downarrow x) =  \bigwedge_{\{y \mbox{ : } \downarrow x \leq \downarrow y  \}}\downarrow y = \downarrow x. \ \ $$
Thus we have the following commutative diagram
\begin{center}
	\begin{tikzcd}
	X \arrow[dr, bend right, "\downarrow(-)" ] \arrow[r, "\tilde{\downarrow}(-)" ] & \mathbb{D}(X)^T \\
	& \mathbb{D}(X) \arrow[u, "\pi_T"]
	\end{tikzcd}
\end{center}
The set of fixed points of this monad forms a partially ordered set denoted by $\mathbb{D}(X)^T$ which is called the \textit{Dedekind-MacNeille completion} of $X$. The term "completion" refers to the following property of $\mathbb{D}(X)^T$: every element of $\mathbb{D}(X)^T$ is both a supremum of elements of $X$ and an infimum of elements of $X$, and the embedding $ X \rightarrow \mathbb{D}(X)^T$ is the smallest embedding of $X$ in a complete ordered set with this property. Moreover, in \cite{MacNeille1937}, MacNeille proved that $\mathbb{D}(X)^T$ is an injective hull for $X$ with respect to monotone embeddings.\par\medskip

This construction admits an "easy" generalization to the enriched case. First of all, we recall that injectives in $\Vcat$ with respect to fully faithful functors are cocomplete $V$-categories, a statement that one can easily prove by using the characterization of cocomplete $V$-categories we mentioned in Theorem \ref{Coco}.
\begin{re}
	Injectives in $\Vcat$ (with respect to fully faithful functors) are exactly the algebras for the Kock-Zöberlein monad $\mathbb{D}(-)$, as it is explained in \cite{Esc97, Esc98, EF99} in a more general context.
\end{re}
Let $(X,a)$ be a $V$-category. As in the ordered case, we have an adjunction, called the \textit{Isbell adjunction}
\[
\begin{tikzcd}
\mathbb{D}(X) \ar[r,bend left,"L",""{name=A, below}] & \mathbb{U}(X), \ar[l,bend left,"R",""{name=B,above}] \ar[from=A, to=B, symbol=\dashv]
\end{tikzcd}
\]
where, for $j\in \mathbb{D}(X)$, $l \in \mathbb{U}(X)$, one has
$$L(j)= \bigwedge_{x \in X} [j(x), a(x,=)] \simeq (\Yo{X})^* \cdot j_*, \ \ R(l)=\bigwedge_{x \in X} [j(x), a(-,x)] \simeq l^* \cdot (\CoYo{X})_* .$$
The monad $T$ induced by this adjunction can be calculated as before:
\begin{alignat*}{2}
T(j) & =\bigwedge_{x \in X} [L(j)(x), a(-,x)] \\
& = \bigwedge_{x \in X} L(j)(x) \pitchfork \Yo{X}(x) \\
& = \limit(L(j),\Yo{X})\\
& = \limit((\Yo{X})^* \cdot j_*, \Yo{X}).
\end{alignat*}
In particular, it is immediate from the definition that, for all $x \in X$,
$$T(\Yo{X}(x)) = \limit((\Yo{X})^* \cdot j_*, \Yo{X}) \simeq \Yo{X}(x). \ \ $$
Thus, as before, the Yoneda embedding factorizes as in
\begin{center}
	\begin{tikzcd}
	X \arrow[dr, bend right, "\Yo{X}" ] \arrow[r, "\tilde{\Yo{X}}" ] & \mathbb{D}(X)^T \\
	& \mathbb{D}(X). \arrow[u, "\pi_T"]
	\end{tikzcd}
\end{center}
The fixed points of this monad form a $V$-category $\mathbb{D}(X)^T$ in which every element $j$ can be written as
$$ j \simeq \colim(j^* \cdot (\Yo{X})_* ,\Yo{X}), \ \ j \simeq \limit((\Yo{X})^* \cdot j_*, \Yo{X}),$$
where the first isomorphism is the well know fact that every presheaf is the weighted colimit of the Yoneda embedding weighted by itself (see \cite{ECT}), and the second one follows from $j$ being a fixed point of $T$.\\
This motivates the following definition.
\begin{defin}\label{DeV}
	A fully faithful $V$-functor $i: (X,a) \rightarrow (Z,b)$ is called \textit{dense} if every element $z \in Z$ is of the form
	$$ z \simeq \colim(z^* \cdot i_* ,i), \ \ z \simeq \limit(i^* \cdot z_*, i).$$
\end{defin}
In this way we have a nice characterization of essential morphisms in $\Vcat$ as contained in the following theorem.
\begin{teorema}\label{Ess}
	A fully faithful $V$ functor $i: (X,a) \rightarrow (Z,b)$ is dense if and only if it is essential.
\end{teorema}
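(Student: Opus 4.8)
The plan is to reduce both implications to a pointwise reformulation of density and to recognize the two conditions in Definition~\ref{DeV} as full-faithfulness of two canonical presheaf/copresheaf valued functors. Unravelling the weighted colimit and limit exactly as in the definitions of $\colim$ and $\limit$, the condition $z\simeq\colim(z^{*}\cdot i_{*},i)$ becomes, for all $y\in Z$,
\[
b(z,y)=\bigwedge_{x\in X}\bigl[\,b(i(x),z),\,b(i(x),y)\,\bigr]\qquad(\mathrm{D1}),
\]
and $z\simeq\limit(i^{*}\cdot z_{*},i)$ becomes, for all $y\in Z$,
\[
b(y,z)=\bigwedge_{x\in X}\bigl[\,b(z,i(x)),\,b(y,i(x))\,\bigr]\qquad(\mathrm{D2}).
\]
The inequalities $\leq$ always hold by the triangle law $b\cdot b\leq b$, so density is the assertion of equality. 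Consider the two $V$-functors given by the mates of $i_{*}$ and $i^{*}$,
\[
\psi=\ulcorner i_{*}\urcorner:Z\to\mathbb{D}(X),\ z\mapsto b(i(-),z),\qquad \chi:Z\to\mathbb{U}(X),\ z\mapsto b(z,i(-)).
\]
Computing the hom-objects of $\mathbb{D}(X)$ and $\mathbb{U}(X)$ shows that $(\mathrm{D1})$ says exactly that $\psi$ is fully faithful and $(\mathrm{D2})$ that $\chi$ is fully faithful; moreover, since $i$ is fully faithful, $\psi\circ i=\Yo{X}$ and $\chi\circ i=\CoYo{X}$.

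With this in hand, the implication ``essential $\Rightarrow$ dense'' is immediate. The Yoneda and co-Yoneda lemmas make $\Yo{X}$ and $\CoYo{X}$ fully faithful, so $\psi\circ i$ and $\chi\circ i$ both lie in the class $J$ of fully faithful $V$-functors. Applying the defining property of an essential morphism to the $V$-functors $\psi$ and $\chi$ then forces $\psi$ and $\chi$ themselves to be fully faithful, i.e.\ $(\mathrm{D1})$ and $(\mathrm{D2})$ hold for every $z$, which is density.

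For the converse, ``dense $\Rightarrow$ essential'', I would argue pointwise. Let $f:(Z,b)\to(W,c)$ be a $V$-functor with $f\circ i$ fully faithful; since $f$ is a $V$-functor we already have $b(z,z')\leq c(f(z),f(z'))$, so it suffices to prove the reverse inequality. First I would establish the key lemma that, for $x\in X$ and $z'\in Z$,
\[
c\bigl(f i(x),f(z')\bigr)\leq b\bigl(i(x),z'\bigr),
\]
by checking for each $x'\in X$ that $c(fi(x),f(z'))\otimes b(z',i(x'))\leq c(fi(x),fi(x'))=b(i(x),i(x'))$ (using that $f$ is a $V$-functor, the triangle law in $W$, and full faithfulness of $fi$ and $i$), then taking $\bigwedge_{x'}$ and invoking $(\mathrm{D2})$ with its free variable set to $i(x)$. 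Second, the same style of estimate gives, for each $x\in X$, that $c(f(z),f(z'))\otimes b(i(x),z)\leq c(fi(x),f(z'))$, which by the key lemma is $\leq b(i(x),z')$; taking $\bigwedge_{x}$ and invoking $(\mathrm{D1})$ yields $c(f(z),f(z'))\leq b(z,z')$. Hence $f$ is fully faithful, so $i$ is essential.

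The hard part is precisely this key lemma. A generic $V$-functor only supplies inequalities of the shape $b(\cdot,\cdot)\leq c(f\cdot,f\cdot)$, whereas the cross-term $c(fi(x),f(z'))$ must be bounded \emph{above} by $b(i(x),z')$; producing this reverse bound is exactly where full faithfulness of $f\circ i$ and the limit-side density condition $(\mathrm{D2})$ must interlock through a single application of the triangle law in $W$. Once that estimate is secured, the remainder is routine bookkeeping with the internal hom $[-,=]$ and the adjunctions of $\Dist{V}$.
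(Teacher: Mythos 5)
Your proof is correct, and half of it takes a genuinely different route from the paper's. For dense $\Rightarrow$ essential, your two-step pointwise estimate --- first the key lemma $c(fi(x),f(z'))\leq b(i(x),z')$, obtained from the triangle law in $W$, full faithfulness of $fi$ and of $i$, and one application of $(\mathrm{D2})$ at $y=i(x)$; then $c(f(z),f(z'))\leq b(z,z')$ via $(\mathrm{D1})$ --- is exactly the pointwise content of the paper's argument, which instead writes $b(x,z)$ as a nested weighted limit $\limit(x^{*}\cdot i_{*},\limit(i^{*}\cdot z_{*}, b(i(w),i(q))))$, substitutes $b(i(w),i(q))=a(w,q)=c(fi(w),fi(q))$, and reassembles using $\colim(j,gf)\leq g(\colim(j,f))$ and $g(\limit(l,f))\leq \limit(l,gf)$; same bookkeeping, different calculus. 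For essential $\Rightarrow$ dense, however, you are shorter than the paper: there, the Yoneda embedding is factored through the Isbell fixed-point category $\mathbb{D}(X)^{T}$, its injectivity is used to extend $\tilde{\Yo{X}}$ along $i$ to some $h\colon Z\to\mathbb{D}(X)^{T}$, essentiality makes $h$ fully faithful, and a reflection lemma (if $fg$ is dense and $f$ fully faithful, then $g$ is dense) finishes the proof; you instead apply essentiality directly to the two mates $\psi=\ulcorner i_{*}\urcorner\colon Z\to\mathbb{D}(X)$ and $\chi\colon Z\to\mathbb{U}(X)$, having correctly observed that $(\mathrm{D1})$ and $(\mathrm{D2})$ are literally full faithfulness of $\psi$ and $\chi$, and that $\psi i=\Yo{X}$ and $\chi i=\CoYo{X}$ are fully faithful by the (co-)Yoneda lemma. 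Your version is more elementary and self-contained: it needs neither the injectivity of $\mathbb{D}(X)^{T}$ (hence no Isbell adjunction or monad at this point) nor the reflection lemma. What the paper's detour buys is reusability: $\mathbb{D}(X)^{T}$ is the completion it needs anyway for injective hulls, and the extend-along-an-injective argument is the one that survives in the $(L,V)$-setting of the later sections, where --- as the paper itself notes --- no analogue of $\mathbb{U}(X)$, and hence of your $\chi$, is available.
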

Before we prove it, we state a straightforward lemma.
\begin{lem}
	Suppose we have a fully faithful $V$-functor $f: (Z,b) \rightarrow (Y,c)$ and a $V$-functor $g : (X,a) \rightarrow (Z,b)$. If $fg $ is dense, then $g$ is dense too.
\end{lem}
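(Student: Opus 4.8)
The plan is to exploit the full faithfulness of $f$ in order to transport the two defining formulas for density from $Y$ down to $Z$. First I would record that $g$ is automatically fully faithful, which is needed before Definition \ref{DeV} can even be applied to it: since $fg$ is dense it is in particular fully faithful, so $a(x,x') = c(fg(x),fg(x'))$ for all $x,x' \in X$, and full faithfulness of $f$ gives $c(fg(x),fg(x')) = b(g(x),g(x'))$; combining these yields $a(x,x') = b(g(x),g(x'))$, so $g$ is fully faithful.

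Next I would fix $z \in Z$ and apply the density of $fg$ at the element $f(z) \in Y$. Unwinding Definition \ref{DeV} pointwise, and using that the composite weight $(f(z))^{*}\cdot (fg)_{*}$ collapses to the representable $c(fg(-),f(z))$ via $\Id \leq c$ and $c\cdot c \leq c$, the colimit half of density for $fg$ reads: for every $y' \in Y$,
$$ c(f(z), y') = \bigwedge_{x \in X} [\, c(fg(x), f(z)),\ c(fg(x), y') \,]. $$
I would then restrict the test object to $y' = f(w)$ with $w \in Z$ and apply full faithfulness of $f$ three times to rewrite each $c$-hom as a $b$-hom, obtaining
$$ b(z, w) = \bigwedge_{x \in X} [\, b(g(x), z),\ b(g(x), w) \,] \qquad \mbox{for all } w \in Z. $$
Since the right-hand side is exactly $b(\colim(z^* \cdot g_*, g), w)$, and $(-)_{*}$ is locally fully faithful by Theorem \ref{Pro}, this equality for all $w$ forces $z_* \simeq (\colim(z^* \cdot g_*, g))_*$, hence $z \simeq \colim(z^* \cdot g_*, g)$. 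This is the first half of density for $g$.

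The limit half is entirely symmetric: applying the limit condition of the density of $fg$ at $f(z)$, restricting the co-test objects to $f(w)$, and again translating $c$-homs into $b$-homs through the full faithfulness of $f$, one arrives at $b(w, z) = \bigwedge_{x} [\, b(z, g(x)),\ b(w, g(x)) \,]$ for all $w \in Z$, i.e. $z \simeq \limit(g^* \cdot z_*, g)$. Together the two formulas show every $z \in Z$ satisfies Definition \ref{DeV}, so $g$ is dense.

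I expect the only genuine subtlety to be the legitimacy of restricting the (co)test objects of the $fg$-density formulas to the image of $f$; this is harmless, since density of $fg$ supplies the displayed equalities for \emph{all} $y' \in Y$ while I only ever evaluate them at $y' = f(w)$, so no surjectivity of $f$ is required. The rest is bookkeeping: verifying that the composite weights $(f(z))^{*}\cdot (fg)_{*}$, $z^{*}\cdot g_{*}$, $g^{*}\cdot z_{*}$ reduce to the expected representables using reflexivity and transitivity of the hom, and that the applications of full faithfulness of $f$ match up the arguments correctly.
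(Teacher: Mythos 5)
Your proposal is correct, and the paper in fact states this lemma without any proof (dismissing it as ``straightforward''); your computation---evaluating the density equalities for $fg$ at test objects of the form $f(w)$, collapsing the composite weights $(f(z))^{*}\cdot(fg)_{*}$ and $(fg)^{*}\cdot(f(z))_{*}$ to $c(fg(-),f(z))$ and $c(f(z),fg(-))$, and then translating each $c$-hom into a $b$-hom via full faithfulness of $f$---is exactly the intended argument, with the universal properties of $\colim(z^{*}\cdot g_{*},g)$ and $\limit(g^{*}\cdot z_{*},g)$ verified directly at $z$. Your preliminary observation that $g$ inherits full faithfulness from $fg$ and $f$ is also the right (and necessary) first step, since Definition \ref{DeV} only applies to fully faithful functors.
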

\begin{proof}(Of the theorem)
	$\Rightarrow).$  Let $f: (Z,b) \rightarrow (Y,c)$ be a $V$-functor such that $\restr{f}{X}$ is fully faithful. We have to show that
	$$c(f(x),f(z)) \leq b(x,z),  \mbox{ for all x,z $\in Z$}.$$
	Since $i$ is dense, we can write
	$$ x \simeq \colim(x^* \cdot i_*, i)), \ \ z \simeq \limit(i^* \cdot z_*,i).$$
	By playing with the properties of (co)limits, we have
	\begin{alignat*}{2}
	 b(x,z)  & = b(\colim(x^* \cdot i_*, i(w)),\limit(i^* \cdot z_*, i(q))) \\
	& = \limit(x^* \cdot i_*, \limit,(i^* \cdot z_*), b(i(w),i(q))))  \\
	& \mbox{(since $i$ is fully faithful)} \\
	& = \limit(x^* \cdot i_*, \limit,(i^* \cdot z_*), a(w,q)))  \\
	& \mbox{(since $\restr{f}{X}$ is fully faithful)} \\
	& = \limit(x^* \cdot i_*, \limit,(i^* \cdot z_*), c(fi(w),fi(q)))) \\
	 &= c(\colim(x^* \cdot i_*, fi(w)),\limit(i^* \cdot z_*, fi(q))) \\
	 & \geq c(f(\colim(x^* \cdot i_*, i(w)), f(\limit(i^* \cdot z_*, i(q))) \\
	 & \geq c(f(x),f(z)).
	\end{alignat*}
	This shows that $i$ is essential.\par\medskip
	$\Leftarrow).$ Since $\mathbb{D}(X)^T$ is injective, there exists a $V$-functor $Z \rightarrow \mathbb{D}(X)^T $ that makes the following diagram
	\[
	\begin{tikzcd}
	Z \arrow[dr, dashrightarrow]
	& X \arrow[d] \arrow[l]\\
	&\mathbb{D}(X)^T
	\end{tikzcd}
	\]
	commute. Because $i$ is essential, then the dashed arrow is fully faithful. The result now follows from the previous lemma.\\
\end{proof}

This theorem ends our \textit{intermezzo} section. In the next section we apply similar ideas in order to construct injective hulls in $\Multi{V}.$ Unfortunately, the procedure is not so smooth as for $V$-categories. \par\medskip

The first inconvenience is due to the fact that, since so far we are not aware of an object equivalent to $ \mathbb{U}(X)$ for an $(L,V)$-category $(X,a)$, we do not have an analogue of the Isbell adjunction and thus we have to define the monad $T$ "manually". Moreover, since we need the category of algebras for this monad to be an $(L,V)$-category, we need $T$ to be a lax monoidal monad. \par\medskip

The second inconvenience follows directly from the first one. In order to prove that $T$ is lax monoidal, we need to focus our attention on a particular subcategory of $\Multi{V}$: the category of \textit{$V$-quantum B-algebras}. For this category we will be able to mimic all the constructions done in this section and build injective hulls as algebras for a lax monoidal monad which resembles the one we introduced before. With a slight modification of Definition \ref{DeV}, where we substitute the $V$-colimit part with an $(L,V)$-colimit, we will be able to prove the equivalent of Theorem \ref{Ess}. Luckily, the restriction to $V$-quantum B-algebras will not prevent us from constructing an injective hull for every $(L,V)$-category. By embedding every $(L,V)$-category in a $V$-quantum B-algebra we will provide a way to construct an injective hull for every $(L,V)$-category.

\section{Injective Hulls of Enriched Quantum B-Algebras}
Promonoidal categories were introduced by Day in his thesis \cite{day_1971}. They originated from the observation that, in order to define a monoidal structure on the functor category $[X,V]$, one only needs a  \textit{promonoidal} structure on $X$. As we are going to describe it, a promonoidal category is a monoid in $\Dist{V}$, with the monoidal structure inherited from the one in $\Vcat$, as described.\par\medskip

Quantum B-algebras were introduced in \cite{RUMP2014759} and further developed in \cite{Rump2013}. The motivation comes from non-commutative algebraic logic, for which they provide a unified semantic, as better explained in \cite{RUMP2014759, Rump2013}. They are ordered sets equipped with two implications that mimic the residuals in a quantale.\par\medskip

Since ordered monoids, and more generally their enriched counterpart, are promonoidal categories for which (part of) the promonoidal structure is representable, it is natural to ask if the same holds for quantum B-algebras. Not surprisingly, the answer is positive, as briefly shown in the last section of \cite{2016THECO}: quantum B-algebras are \textit{representable} promonoidal categories.\par\medskip
In the following section we take this point of view in order to present the enriched version of quantum B-algebras. We show how $(L,V)$-categories provide a common roof for both of them, by showing how some categorical constructions become natural when we consider both as being full subcategories of $\Multi{V}$.\par\medskip

The section is structured as follows:
\begin{itemize}
	\item In the first subsection we introduce promonoidal categories and quantum B-algebras along with some of their properties.
	\item In the second subsection we show how the characterization of injectives in both categories follows easily from the characterization of injectives in $\Multi{V}$. We also generalize some constructions of \cite{2016THECO} to the enriched case. In particular, we show that every enriched quantum B-algebra admits an injective hull.
\end{itemize}
\subsection{Promonoidal $V$-Categories and $V$-Quantum B-Algebras}
In the previous section we briefly explained how $\Vcat$ can be equipped with a monoidal structure, by defining for two $V$-categories $(X,a), (Y,b)$, their tensor product as
$$ X \boxtimes Y = (X\boxtimes Y, a \otimes b).$$
This monoidal structure induces a monoidal structure on $\Dist{V}$, where for $j : X \xslashedrightarrow{} Y, \ \ l : Z \xslashedrightarrow{} W$, one has
$$ j \boxtimes l : X \boxtimes Z \xslashedrightarrow{} Y \boxtimes W, \ \ j \boxtimes l ((x,z),(y,w)) = j(x,y) \otimes l(z,w).$$
With this in mind, it is possible to talk about monoids in $\Dist{V}$.
\begin{defin}\label{PromoDef}
	A \textit{promonoidal $V$-category} is a monoid in $\Dist{V}$. This means that it is a $V$-category $(X,a)$ together with two distributors:
	\begin{itemize}
		\item $ P : X \boxtimes X \xslashedrightarrow{} X$;
		\item $ J : K \xslashedrightarrow{} X$;
	\end{itemize}
such that:
\begin{itemize}
	\item $P \cdot (P \boxtimes \Id) = P \cdot (\Id \boxtimes P)$;
	\item $P \cdot (J \boxtimes \Id ) = \Id, \ \ P \cdot (\Id \boxtimes J) = \Id$.
\end{itemize}
\end{defin}
\begin{re}
	In pointwise notation, the last two conditions mean:
	\begin{itemize}
		\item  $ \bigvee_{x \in X}P(a,b,x)  \otimes P(x,c,d) = \bigvee_{x \in X} P(a,x,d) \otimes P(b,c,x);$
		\item $\bigvee_{z \in X} J(z) \otimes P(z,x,w) = a(x,w), \quad \bigvee_{z \in X} J(z)\otimes P(x,z,w) = a(x,w).$
	\end{itemize}
\end{re}
\begin{re}
	When $V = \mathbf{2}$, a promonoidal ordered set is an ordered set $(X,\leq_X)$ together with:
	\begin{itemize}
		\item A subset $P$ of $X^{\op} \times X^{\op} \times X$ which is downward closed in the first two components and upward closed in the third one;
		\item An upward closed subset $J \subseteq X.$
	\end{itemize}
	The subsets $P$ and $J$ are subject to the conditions expressed in Definition \ref{PromoDef}. Promonoidal ordered sets appear also in logic\textemdash where they are called ternary frames\textemdash as models of substructural logics (see \cite{RESTALL2006289} for further details).
\end{re}
\begin{defin}\label{ProFun}
	Following Day \cite{Day1995}, let $(X,a,P,J)$, $(Y,b, R,U)$ be promonoidal $V$-categories; a \textit{promonoidal $V$-functor} between them is a $V$-functor $f : (X,a) \rightarrow (Y,b)$ such that, for all $x,y,z \in X$,
	$$ P(x,y,z) \leq R(f(x),f(y),f(z)), \ \ J(x) \leq U(f(x)).$$
	If also the reverse inequalities hold, we call $f$ \textit{strong}.
\end{defin}
In this way we can define $\Pro$ as the category whose objects are promonoidal $V$-categories and whose arrows are promonoidal $V$-functors.
\begin{re}\label{ChangeOfBase4}
	In Remark \ref{ChangeOfBase3} we showed that a strong morphism of quantales $f : V \rightarrow Q$ induces a $2$-functor
	$$ \Dist{V} \rightarrow \Dist{Q}.$$
	If we apply this to the morphisms $f : \mathbf{2} \rightarrow [0, \infty]^{\op}$ and $h : [0, \infty]^{\op} \rightarrow \mathbf{2}$, defined in examples \ref{QuantMorph3} and \ref{QuantMorph2} of Examples \ref{QMorph}, since both of them preserve the monoidal structure (as explained in Remark \ref{ChangeOfBase3}), the two $2$-functors
	$$ F : \Dist{\mathbf{2}} \rightarrow \Dist{[0, \infty]^{\op}},$$
	$$ H : \Dist{[0, \infty]^{\op}} \rightarrow \Dist{\mathbf{2}},$$
	induce two $2$-functors
	$$\mathbf{2} \mbox{-} \mathtt{Pro} \rightarrow [0, \infty]^{\op} \mbox{-} \mathtt{Pro},$$
	$$[0, \infty]^{\op} \mbox{-} \mathtt{Pro} \rightarrow \mathbf{2} \mbox{-} \mathtt{Pro}.$$
\end{re}
\begin{Exs}
	\begin{enumerate}
		\item Every monoidal $V$-category $(X,a, \mult_X, u_X)$ defines a promonoidal one as follows:
		\begin{itemize}
			\item $P(x,y,z) = a(x\mult_X y, z);$
			\item $J(x) = a(u_x,x)$.
		\end{itemize}
	\item Every $V$-quantale $(Q, \mult, u_Q)$ defines a promonoidal $V$-category in two (equivalent) ways; the first one is the one described above, while the second one is as follows:
	\begin{itemize}
		\item $P(x,y,z) = Q(x,y\rimp z) = Q(y, x\limp z);$
		\item $J(x) = Q(u_Q, x).$
	\end{itemize}
The equivalence of the two formulations follows by adjunction.
	\end{enumerate}
We will explore further the last example when we will introduce enriched quantum B-algebras.
\end{Exs}
\begin{re} \label{Day}
	As we mentioned in the introduction of the section, a promonoidal structure induces a monoidal structure on the corresponding presheaf category. More precisely, given a promonoidal category $(X,a,P,J)$, we can define a monoidal structure on $(\mathbb{D}(X), \mult_D, J^{\circ})$ as follows:
	$$j \mult_D l (x) = \bigvee_{w,z \in X} P^{\circ}(w,z,x) \otimes j(w)\otimes l(w).$$
In particular, when the promonoidal structure comes from a monoidal one, the previous formula reads as
$$j \mult_D l (x) = \bigvee_{w,z \in X} a(x,w\mult z) \otimes j(w)\otimes l(w).$$
In this case, $(\mathbb{D}(X), \mult_D, J^{\circ})$ becomes a $V$-quantale, where\textemdash as an example\textemdash the right implication is given by
$$l \rimp h (x) = \bigwedge_{y \in X} [l(y), h(x \mult y)].$$
\end{re}
\begin{prop}\label{Fully}
$\Pro$ is a full subcategory of $\Multi{V}$.
\end{prop}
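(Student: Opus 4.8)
The plan is to exhibit an injective-on-objects, fully faithful functor $\Pro \to \Multi{V}$ which is the identity on underlying sets and functions. Given a promonoidal $V$-category $(X,a,P,J)$, I would first assemble from the monoid structure an $n$-ary multiplication distributor for every $n \ge 0$: set $P^{(0)} = J : K \xslashedrightarrow{} X$, $P^{(1)} = a$ (the identity of the monoid in $\Dist{V}(X,X)$), $P^{(2)} = P$, and inductively $P^{(n)} = P \cdot (P^{(n-1)} \boxtimes \Id) : X^{\boxtimes n} \xslashedrightarrow{} X$. The associativity axiom $P \cdot (P \boxtimes \Id) = P \cdot (\Id \boxtimes P)$ guarantees that $P^{(n)}$ does not depend on the chosen bracketing, while the unit axioms $P\cdot(J\boxtimes\Id)=\Id=P\cdot(\Id\boxtimes J)$ let one insert or delete length-$0$ factors. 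I then define the $(L,V)$-structure $\hat a : X \kmodto X$ by $\hat a(\underline x, y) = P^{(n)}(x_1,\dots,x_n,y)$ for $\underline x = (x_1,\dots,x_n)$, and send a promonoidal functor to the function with the same underlying map.

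Next I would check that $(X,\hat a)$ is genuinely an $(L,V)$-category. The reflexivity condition $e_X^{\circ} \le \hat a$ reduces, at singleton lists, to $k \le \hat a((x),x) = a(x,x)$, which holds since $\Id \le a$; on every other list $e_X^{\circ}$ takes the value $\perp$. The transitivity condition $\hat a \circ \hat a \le \hat a$ is the crux of the argument. Unwinding $\hat a \circ \hat a = \hat a \cdot L\hat a \cdot m_X^{\circ}$, its value at $(\underline x, y)$ is a supremum, taken over all ways of cutting $\underline x$ into consecutive blocks $(\underline x_1;\dots;\underline x_k)$ and over all intermediate elements $w_1,\dots,w_k$, of $\bigotimes_i P^{(n_i)}(\underline x_i, w_i) \otimes P^{(k)}(w_1,\dots,w_k,y)$. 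For each fixed block decomposition this expression is exactly the composite $P^{(k)} \cdot (P^{(n_1)} \boxtimes \dots \boxtimes P^{(n_k)})$ evaluated at $(\underline x, y)$, which by iterated associativity (and the unit axioms, to absorb any empty blocks) equals $P^{(n)}(\underline x, y) = \hat a(\underline x, y)$, with $n = n_1 + \dots + n_k$. Since every term of the supremum already equals $\hat a(\underline x, y)$, one gets $\hat a \circ \hat a = \hat a$, in particular the required inequality. This is the step I expect to be the main obstacle, precisely because the bookkeeping of empty sublists forces genuine use of the unit laws and not merely of associativity.

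Finally I would establish full faithfulness and injectivity on objects. Faithfulness is immediate since the embedding keeps the same underlying function. For fullness, let $(X,a,P,J)$ and $(Y,b,R,U)$ be promonoidal with associated structures $\hat a, \hat b$ and iterated multiplications $P^{(n)}, R^{(n)}$. If $f$ is a promonoidal functor, an induction on $n$ using the inductive definition of $P^{(n)}$ and the monotonicity of composition in $\Dist{V}$ shows $P^{(n)}(\underline x,y) \le R^{(n)}(Lf(\underline x), f(y))$, that is $\hat a(\underline x,y) \le \hat b(Lf(\underline x),f(y))$, so $f$ is an $(L,V)$-functor. Conversely, an $(L,V)$-functor $f : (X,\hat a)\to(Y,\hat b)$ yields, by restricting the defining inequality to lists of length $1$, $2$ and $0$, exactly the conditions $a(x,y) \le b(f(x),f(y))$, $P(x,y,z) \le R(f(x),f(y),f(z))$ and $J(y) \le U(f(y))$, so $f$ is promonoidal. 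The same length-$0,1,2$ restriction recovers $a$, $P$ and $J$ from $\hat a$, whence the assignment is injective on objects and identifies $\Pro$ with a full subcategory of $\Multi{V}$.
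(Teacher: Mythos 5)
Your proposal is correct, and its overall skeleton is the same as the paper's: build the $(L,V)$-structure from iterated promonoidal multiplications defined inductively (the paper uses $a_n = P \cdot (\Id \boxtimes a_{n-1})$, you use the mirror-image $P^{(n)} = P \cdot (P^{(n-1)} \boxtimes \Id)$; these agree by associativity), and obtain fullness by restricting the $(L,V)$-functor inequality to lists of length $0$, $1$ and $2$, with the converse direction by induction on list length. Where you genuinely diverge is at the crux, the verification of $\tilde{a} \circ \tilde{a} \leq \tilde{a}$. The paper treats this as a pointwise computation with the distributor axioms of $P$ and $J$ (e.g.\ $a \cdot P \leq P$ and $P \cdot (\Id \boxtimes a) \leq P$), explicitly concedes that the general case ``is a long and frustrating matter of bookkeeping,'' and only works out a representative instance. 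You instead package each block decomposition of $\underline{x}$ as the composite $P^{(k)} \cdot (P^{(n_1)} \boxtimes \dots \boxtimes P^{(n_k)})$ and invoke generalized associativity for a monoid in a monoidal $2$-category, together with the unit laws for empty blocks, to conclude this equals $P^{(n)}$ on the nose --- so you even get the stronger identity $\tilde{a} \circ \tilde{a} = \tilde{a}$, which is consistent with $a$ being the identity of $\Dist{V}(X,X)$. This abstract route actually completes the bookkeeping the paper elides, at the cost of tacitly relying on two facts worth making explicit: that $\boxtimes$ satisfies the interchange law $(\alpha \boxtimes \beta) \cdot (\gamma \boxtimes \delta) = (\alpha \cdot \gamma) \boxtimes (\beta \cdot \delta)$ in $\Dist{V}$ (true because the quantale is commutative, so composition distributes over the tensor), and that the coherence theorem for monoids applies in this setting; the paper's concrete computation, by contrast, exposes exactly which distributor inequalities are doing the work. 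Both arguments are sound; yours is cleaner and uniform, the paper's more self-contained.
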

\begin{proof}
Let $(X,a,P,J)$ be a promonoidal category. We can define an $(L,V)$-category $(X, \tilde{a} = \amalg_{n \geq 0} a_n)$ in the following inductive way:
	\begin{itemize}
		\item $a_0 = J;$
		\item $a_1 = a;$
		\item $a_2 = P ;$
		\item $a_n = P \cdot (\Id \boxtimes a_{n-1} ).$
	\end{itemize}
Indeed, $e_X^{\circ} \leq \tilde{a}$ follows directly from the definition, while $ \tilde{a} \circ \tilde{a} \leq \tilde{a}$ follows from the inductive definition of $\tilde{a}$, from the properties of $P$ and $J$, and from the fact that $a$ is a $V$-structure. To explain this better, let $\underline{\underline{x}} = (\underline{x}_1,...,\underline{x}_n) \in LLX$, $\underline{y} = (y_1,...,y_n) \in LX$, and $z\in X$. We want to show that
$$L\tilde{a}(\underline{\underline{x}}, \underline{y}) \otimes \tilde{a}(\underline{y},z) \leq \tilde{a}(m_X(\underline{\underline{x}}),z).$$
If we unravel the definition, this means
$$a_{l(\underline{x}_1)}(\underline{x}_1,y_1) \otimes ... \otimes a_{l(\underline{x}_n)}(\underline{x}_1,y_1) \otimes a_n(\underline{y},z) \leq a_{l(\underline{x}_1) + ... + l(\underline{x}_n)} (m_X(\underline{\underline{x}}),z), \ \ (1)$$
where $l(-)$ denotes the length of a list.\par\medskip

The proof of $(1)$ is a long and frustrating matter of bookkeeping which relies on the properties that $P$ and $a$ have. Instead of proving $(1)$, we prefer to illustrate the properties used in a simple case which is sufficiently general to justify $(1)$.\par\medskip

Consider $\underline{\underline{x}} = ((x_1,x_2),x_3)$ and $\underline{y}= (y_1,y_2)$. Then $(1)$ becomes
\begin{alignat*}{2}
P(x_1,x_2,y_1) \otimes a(x_3,y_2) \otimes P(y_1,y_2,z) & = \bigvee_{d \in X} P(x_1,x_2,y_1) \otimes a(y_1,d) \otimes a(x_3,y_2) \otimes P(d,y_2,z) \\
& (\mbox{from } a \cdot P \leq P) \\
& \leq \bigvee_{d \in X} P(x_1,x_2,d) \otimes a(x_3,y_2) \otimes P(d,y_2,z) \\
& (\mbox{from } P\cdot (\Id \boxtimes a) \leq P \cdot (a \boxtimes a ) \leq P) \\
&   \leq \bigvee_{d \in X} P(x_1,x_2,d)\otimes P(d,x_3,z) \\
& = \tilde{a}((x_1,x_2,x_3),z).
\end{alignat*}
Suppose that $f : (X,a,P,J) \rightarrow (Y,b,R,U)$ is a promonoidal functor. Then, for all $\underline{x}\in LX$, $y\in X$ (with $l(\underline{x}) = n$),
\begin{alignat*}{2}
\tilde{a}(\underline{x},y) = a_n(\underline{x},y) & = \bigvee_{c \in X} a_{n-1}((x_1,...,x_{n-1}), c) \otimes P(c, x_{n+1},y) \\
& (\mbox{by induction on $a_n$, using the fact that $f$ is a promonoidal functor}) \\
& \leq \bigvee_{c \in X} b_{n-1}((f(x_1),...,f(x_{n-1})), f(c)) \otimes R(f(c), f(x_{n+1}),f(y)) \\
& =b_n(Lf(\underline{x}),f(y)) = b(Lf(\underline{x}),f(y)).
\end{alignat*}
This shows that $f$ is an $(L,V)$-functor.\\
If $\tilde{a}(\underline{x},y) \leq b(Lf(\underline{x}),f(y))$, then
$$\underline{x} = (x_1,x_2) \Rightarrow P(x_1,x_2,y) \leq R(f(x_1),f(x_2),f(y)),$$
$$\underline{x} = (x) \Rightarrow a(x,y) \leq b(f(x),f(y)),$$
$$\underline{x} = (-) \Rightarrow J(x) \leq U(f(x)).$$
This shows that $\Pro$ is a full subcategory of $\Multi{V}$.\\
\end{proof}
\begin{defin}
	A \textit{$V$-quantum B-algebra} $(X,a)$ is a representable promonoidal $V$-category. This means that it is a promonoidal $V$-category $(X,a,P,J)$ equipped with two binary operations $\rimp, \limp : X \times X \rightarrow X$ and an element $u_X \in X$ such that:
	\begin{itemize}
		\item $P(x,-,y) \simeq a(-, x\limp y);$
		\item $P(-,x,y) \simeq a(-, x\rimp y);$
		\item $J(x) = a(u_X,x).$
	\end{itemize}
\end{defin}
\begin{re}
From the representability condition it directly follows that $\rimp, \limp$ are two bi-functors of the form
$$ \rimp, \limp : X^{\op} \boxtimes X \rightarrow X.$$
Moreover, we also have:
\begin{itemize}
	\item $a(x, y \rimp z ) = a(y, x\limp z);$
	\item $x \rimp ( y \limp z) = y \limp (x \rimp  z);$
	\item $u_X\rimp (=)$ ($u_X\limp(=)$) is the identity.
\end{itemize}
In this way we can recover the definition of (unital) ordered quantum B-algebra given in \cite{Rump2013}.
\end{re}
\begin{Exs}
\begin{enumerate}
\item Every $V$-quantale $(Q, \mult, k)$ defines a $V$-quantum B-algebra, where $\rimp$ and $\limp$ are the right adjoints to $\mult$, as explained in Remark \ref{LimpRimp}. In particular, $V$ itself defines two quantum B-algebras. The first one is the ordered quantum B-algebra that comes from the fact that $V$ is an ordered quantale. The second one comes from the fact that the $V$-category $(V, [-,=])$ is a $V$-quantale.
\item In the same way, every residuated monoidal category defines a $V$-quantum B-algebra.
\end{enumerate}
\end{Exs}
\begin{re}
If we combine Remark \ref{ChangeOfBase3} with Remark \ref{ChangeOfBase4}, we can easily show that the two morphisms of quantales $f : \mathbf{2} \rightarrow [0, \infty]^{\op}$ and $h : [0, \infty]^{\op} \rightarrow \mathbf{2}$, defined in examples \ref{QuantMorph3} and \ref{QuantMorph2} of Examples \ref{QMorph}, induce two $2$-functors between the corresponding categories of $V$-quantum B-algebras. Indeed, let $(X,d_X,P, J)$ be a metric quantum B-algebra. From Remark \ref{ChangeOfBase4} it follows that $(X, H(d_X), H(P), H(J))$, where $H(d_X)$, $H(P)$ and $H(J)$ are defined in Remark \ref{ChangeOfBase}, is a promonoidal ordered set. Since, for all $x,y \in X$, one has
$$ H(d_X(-,x)) = H(d_X)(-,x), \ \ H(d_X(x,=)) = H(d_X)(x,=),$$
$$ H(P(x,-,y))= H(P)(x,-,y), \ \ H(P(-,x,y)) = H(P)(x,-,y), \mbox{ and } H(J(x)) = H(J)(x),$$
it follows:
\begin{itemize}
	\item $H(P)(x,-,y) = H(d_X)(-, x\limp y);$
	\item $H(P)(-,x,y) = H(d_X)(-, x\rimp y);$
	\item $H(J)(x) = H(d_X)(u,x).$
\end{itemize}
This shows that $(X, H(d_X), H(P), H(J))$ is an ordered quantum B-algebra.\\
In the same way we can proceed for the $2$-functor defined by $f$.
\end{re}
The definition of promonoidal functor specializes to the following definition.
\begin{defin}
	A morphism between $V$-quantum B-algebras $f: (X, a)  \rightarrow (Y,b)$ is a $V$-functor that satisfies
	$$ f(x \rimp y) \leq f(x) \rimp f(y),  \mbox{  }  f(x \limp y) \leq f(x) \limp f(y), \mbox{ }  u_Y \leq f(u_X).$$
	If also
	$$ f(x) \rimp f(y) \leq f(x \rimp y),  \mbox{  }  f(x) \limp f(y) \leq f(x \limp y)),$$
	then $f$ is called strict.
\end{defin}
\begin{re}
We can easily derive these conditions from \ref{ProFun}. As an example we show how to derive  $u_Y \leq f(u_X)$.\par\medskip

	\AxiomC{$J(-) \leq U(f(-))$}
	\UnaryInfC{$(u_x)_{*} \leq f^{*}\cdot (u_Y)_{*}$ } (from $f_{*} \dashv f^*$)
	\UnaryInfC{$f_{*} \cdot (u_X)_{*} \leq (u_Y)_{*}$ }
	\UnaryInfC{$b(f(u_X),=) \leq b(u_Y, =)$ }
	\UnaryInfC{$u_Y \leq f(u_X)$ }
	\DisplayProof

\end{re}
The following proposition follows from Proposition \ref{Fully}.
\begin{prop}
	The category of $V$-quantum B-algebras is a full subcategory of $\Multi{V}$.
\end{prop}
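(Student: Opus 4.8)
The plan is to realize the category of $V$-quantum B-algebras as a full subcategory of $\Pro$ and then to compose with the embedding of Proposition \ref{Fully}, using that a full subcategory of a full subcategory is again full. By definition a $V$-quantum B-algebra is a promonoidal $V$-category $(X,a,P,J)$ which is representable, in the sense that the distributors $P(x,-,y)$, $P(-,x,y)$ and $J$ are representable (by $x\limp y$, $x\rimp y$ and $u_X$ respectively); this is a property of $(X,a,P,J)$, the representing data being unique up to isomorphism in the underlying $V$-category. Hence $V$-quantum B-algebras form a subclass of the objects of $\Pro$, and forgetting the representing structure gives a well-defined assignment into $\Pro$.

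The one substantive point is that this assignment is full, i.e.\ that a $V$-functor between two $V$-quantum B-algebras is a morphism of $V$-quantum B-algebras if and only if it is a promonoidal $V$-functor between their underlying promonoidal categories. This is exactly the content of the preceding remark: using the adjunction $f_{*}\dashv f^{*}$ together with the representability of $P,J$ (and of the target data $R,U$), the promonoidal-functor inequalities $P(x,y,z)\leq R(f(x),f(y),f(z))$ and $J(x)\leq U(f(x))$ of Definition \ref{ProFun} are rewritten as $f(x\rimp y)\leq f(x)\rimp f(y)$, $f(x\limp y)\leq f(x)\limp f(y)$ and $u_Y\leq f(u_X)$, which are precisely the defining conditions of a $V$-quantum B-algebra morphism. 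Since every step in this translation is an equivalence (the passage through $f_{*}\dashv f^{*}$ is biconditional, and representability lets one pass freely between $P,J$ and $\rimp,\limp,u_X$), the two classes of morphisms coincide, so the category of $V$-quantum B-algebras sits inside $\Pro$ as a full subcategory.

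Finally I would conclude by transitivity. Proposition \ref{Fully} exhibits $\Pro$ as a full subcategory of $\Multi{V}$ via the inductive construction $\tilde{a}=\amalg_{n\geq 0}a_n$, and composing the two full embeddings sends a $V$-quantum B-algebra to the $(L,V)$-category $(X,\tilde{a})$ fully faithfully; hence the category of $V$-quantum B-algebras is a full subcategory of $\Multi{V}$. The only genuine work lies in the morphism-level matching of the second paragraph, where representability is indispensable: it is precisely what lets the residuated description in terms of $\rimp,\limp,u_X$ be identified with the promonoidal description in terms of $P,J$. Everything else is formal bookkeeping already carried out for Proposition \ref{Fully}.
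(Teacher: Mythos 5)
Your proof is correct and takes essentially the same approach as the paper, which introduces the proposition with the single remark that it ``follows from Proposition \ref{Fully}'': $V$-quantum B-algebras are the representable promonoidal $V$-categories, their morphisms coincide with promonoidal $V$-functors via the adjunction $f_{*}\dashv f^{*}$ and representability (the content of the remark preceding the proposition), and fullness in $\Multi{V}$ is then inherited by composing full embeddings. Your only addition is to make explicit that the translation between the inequalities of Definition \ref{ProFun} and the conditions on $\rimp$, $\limp$, $u_X$ is a genuine equivalence in both directions, a point the paper's remark illustrates only in one direction for the unit condition.
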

\begin{re}
	Let $ (X, a) $ be a $V$-quantum B-algebra, if we unravel the definition contained in \ref{Fully}, we have that the corresponding $(L,V)$-structure is given as follows. For a pair $(\underline{x},y)$ define:
	$$\tilde{a}((x_1,...,x_n), y) = \tilde{a}((x_1,...,x_{n-1}),x_n \rimp y) = ... = a(x_1, x_2 \rimp  ... x_n \rimp y),$$
	$$\tilde{a}((-), y ) = a(u_X, y), \ \ \mbox{   where $(-)$ is the empty list}.$$
\end{re}
\begin{re}
	Let $Q$ be a $V$-quantale. We can view $Q$ as a $V$-quantum B-algebra and as an $(L,V)$-category. It is easy to see that the two structures (call them $Q$ and $Q_L$) coincide; indeed, we have
	$$Q((q_1,...,q_n),w) = Q(q_1, q_2 \rimp ... q_n \rimp w) = Q(q_1 \mult ... \mult q_n, w) = Q_L((q_1,...,q_n),w).$$

\end{re}

\subsection{Injectives and Injective Hulls}\label{InjHulls}
In Section \ref{Intermezzo} we recalled the notion of injective object and of injective hull with respect to a class of morphisms $J$. As we did for $\Vcat$, from now on, when we write 'injective' we always mean injective with respect to fully faithful functors. Since $\Multi{V}$ serves as a common roof under which we perform all the constructions, when we are dealing with promonoidal $V$-categories, $V$-quantum B-algebras, etc. 'injective' always refers to the corresponding notion in $\Multi{V}$.\par\medskip

In the context of $(T,V)$-categories, Hofmann proved in \cite{HOFMANN2011283} the following theorem.
\begin{teorema}\label{KZ}
	A separated $(L,V)$-category $(X,a)$ is injective iff it is cocomplete.
\end{teorema}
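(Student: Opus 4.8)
The plan is to run the same Kock--Z\"oberlein argument used in the Intermezzo for $\Vcat$, now with the presheaf monad $\mathbb{D}_L(-)$ in place of $\mathbb{D}(-)$. The unit of $\mathbb{D}_L(-)$ at $(X,a)$ is the Yoneda embedding $\Yo{X}$, which is fully faithful by Remark \ref{MultiYoneda}, and I would take $\mathbb{D}_L(-)$ to be a Kock--Z\"oberlein monad on $\Multi{V}$. The strategy is then to show that, for separated $(X,a)$, the three conditions ``$X$ is injective'', ``$X$ is a $\mathbb{D}_L$-algebra'' and ``$X$ is cocomplete'' coincide. The bridge between the last two is the $(L,V)$-analogue of Theorem \ref{Coco} (see \cite[Theorem~2.7]{HOFMANN2011283}): $(X,a)$ is cocomplete exactly when $\Yo{X}$ admits a left adjoint, and for a Kock--Z\"oberlein monad such a left adjoint is precisely an algebra structure. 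As noted after Theorem \ref{Coco}, it suffices to produce a left inverse of $\Yo{X}$, the adjunction then being automatic.

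For the implication cocomplete $\Rightarrow$ injective I would first check that every free algebra $\mathbb{D}_L(Z)$ is injective with respect to fully faithful functors. Given a fully faithful $i\colon (Y,a)\to(Z,c)$ and a morphism $g\colon Y\to\mathbb{D}_L(Z)$, I pass to mates via $\Dist{(L,V)}(Z,Y)\simeq\Multi{V}(Y,\mathbb{D}_L(Z))$ (Remark \ref{mate}), so that $g$ corresponds to a distributor $\psi\colon Z\kmodto Y$. Its extension along $i$ is the mate of $i_{\circledast}\circ\psi\colon Z\kmodto Z$, and restricting back along $i$ amounts to composing with $i^{\circledast}$; full faithfulness of $i$, i.e. $i^{\circledast}\circ i_{\circledast}\simeq a$, then yields $i^{\circledast}\circ i_{\circledast}\circ\psi\simeq\psi$, so the extension restricts to $g$. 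Once free algebras are injective, any separated cocomplete $X$ is a split subobject of $\mathbb{D}_L(X)$: the section is the fully faithful $\Yo{X}$ and the retraction is its left adjoint (the colimit functor) supplied by cocompleteness. Since a retract of an injective along a fully faithful section is again injective, $X$ is injective.

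For the converse injective $\Rightarrow$ cocomplete I would exploit that $\Yo{X}\colon X\to\mathbb{D}_L(X)$ is fully faithful. Injectivity of $X$ lets me extend the identity $\Id_X$ along $\Yo{X}$, producing $r\colon\mathbb{D}_L(X)\to X$ with $r\,\Yo{X}\simeq\Id_X$, where separatedness turns the a priori lax comparison into a genuine left inverse. By the Kock--Z\"oberlein property a left inverse of the unit is automatically its left adjoint, so $\Yo{X}$ has a left adjoint; the $(L,V)$-analogue of Theorem \ref{Coco} then gives that $X$ is cocomplete.

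The main obstacle I anticipate is not the formal two-way implication but the bookkeeping needed to make the Kock--Z\"oberlein machinery run in $\Multi{V}$, where, by Remark \ref{K}, the Kleisli composition $s\circ(=)$ fails to preserve suprema. Two points in particular need care: establishing that $\mathbb{D}_L(-)$ really is a Kock--Z\"oberlein monad, so that ``left inverse of the unit'' upgrades automatically to ``left adjoint''; and verifying injectivity of the free algebras $\mathbb{D}_L(Z)$, which rests on the adjunction $\mathbb{D}(i)=-\circ i^{\circledast}\dashv -\circ i_{\circledast}$ of the Examples together with the identity $i^{\circledast}\circ i_{\circledast}\simeq a$ for fully faithful $i$. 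Everything else is a direct transcription of the $V$-categorical argument recalled in the Intermezzo.
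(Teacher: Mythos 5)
Your proof is correct and takes essentially the route the paper itself relies on: the paper does not prove Theorem \ref{KZ} at all but cites it to \cite{HOFMANN2011283}, and the argument there is exactly the Kock--Z\"oberlein machinery you reconstruct---free algebras $\mathbb{D}_L(Z)$ are injective via the mate correspondence of Remark \ref{mate} together with $i^{\circledast}\circ i_{\circledast}\simeq a$ for fully faithful $i$, cocomplete objects are retracts of free algebras, and for the converse a left inverse of the unit $\Yo{X}$ upgrades to a left adjoint by the KZ property (a pattern the paper itself reuses in the proof of Theorem \ref{Inj}, where $h\,\Yo{X}=\Id$ is read as an algebra structure). The only blemish is notational: in your free-algebra step the letter $Z$ does double duty as the codomain of $i$ and as the base of the presheaf category $\mathbb{D}_L(Z)$; rename one of them and the step is the standard one.
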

We can go further in our analysis and characterize them as $V$-quantales.
\begin{teorema}
	A separated $(L,V)$-category $(X,a)$ is injective with respect to fully faithful functors iff it is a $V$-quantale.
\end{teorema}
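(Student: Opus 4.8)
The plan is to reduce everything to cocompleteness via Theorem~\ref{KZ} and then to match cocomplete $(L,V)$-categories with $V$-quantales through the description in Remark~\ref{LimpRimp}. Since a separated $(L,V)$-category is injective with respect to fully faithful functors iff it is cocomplete, it suffices to prove that a separated $(L,V)$-category $(X,a)$ is cocomplete iff it is a $V$-quantale, i.e.\ a separated cocomplete monoidal $V$-category whose multiplication is cocontinuous in each variable.

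For the implication from $V$-quantales, let $(Q,a,\mult,u_Q)$ be a $V$-quantale, regarded as the representable $(L,V)$-category with $\hat a(\underline x,y)=a(x_1\mult\cdots\mult x_n,y)$. I would establish cocompleteness through the $(L,V)$-analogue of Theorem~\ref{Coco} \cite{HOFMANN2011283} by producing a left adjoint to the Yoneda embedding $\Yo{Q}\colon Q\to\mathbb{D}_L(Q)$. The relevant $(L,V)$-colimits are computed in the underlying $V$-category, via the decomposition of Corollary~\ref{Deco}, as suprema of copowers $\bigvee_{\underline x\in LX} j(\underline x)\odot\alpha Lf(\underline x)$; since $Q$ is cocomplete as a $V$-category (so all copowers and conical colimits exist) and $\mult$ is cocontinuous in each variable, this expression is well defined and delivers the desired colimits. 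Hence $Q$ is a cocomplete $(L,V)$-category.

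Conversely, let $(X,a)$ be separated and cocomplete. First, the underlying $V$-category is cocomplete: its copowers are precisely the $(L,V)$-colimits recorded in the Examples following the definition of weighted $(L,V)$-colimit, conical colimits arise in the same manner, and by Remark~\ref{Deco1} these generate all weighted colimits, so Theorem~\ref{Coco} applies. Using separatedness and cocompleteness I would then read off a monoid structure: for each pair $(x_1,x_2)$ let $x_1\mult x_2$ be the unique element representing the presheaf $y\mapsto a((x_1,x_2),y)$, and let $u_X$ be the unique element with $a(u_X,y)=a((-),y)$ for all $y$ (here $(-)$ is the empty list). The monoid axioms for $(\mult,u_X)$ then follow from $e_X^\circ\le a$, $a\circ a\le a$ and the unit/multiplication identities of the list monad, which, after passing through the representing elements, encode exactly unitality and associativity.

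It remains to verify that $(X,\mult,u_X)$ is a $V$-quantale inducing the original $(L,V)$-structure. Representability, $\hat a(\underline x,y)=a(x_1\mult\cdots\mult x_n,y)$, I would prove by induction on the length of $\underline x$, using the defining property of $\mult$ together with the compatibility of $a$ with the list-monad multiplication $m_X$. Cocontinuity of $\mult$ in each variable follows once one exhibits right adjoints $x\rimp(=)$ and $x\limp(=)$ to $x\mult(=)$ and $(-)\mult x$: these are obtained from cocompleteness by representing the presheaves $y\mapsto a((x,y),z)$ and $y\mapsto a((y,x),z)$, which are $V$-functorial thanks to $e_X^\circ\le a$ and $a\circ a\le a$, and a functor admitting a right adjoint is cocontinuous. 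By Remark~\ref{LimpRimp} this exhibits $X$ as a $V$-quantale. I expect the main obstacle to be this converse direction, and within it the proof of representability: one must show that the multiplication read off from two-element lists reconstructs the $(L,V)$-structure on lists of every length, which is exactly where the interplay between cocompleteness, separatedness and the monad multiplication $m_X$ must be handled with care.
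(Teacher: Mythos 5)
Your overall architecture (reduce injectivity to cocompleteness via Theorem \ref{KZ}, then match cocomplete separated $(L,V)$-categories with $V$-quantales) is legitimate, and your forward direction is sound in outline: it essentially parallels the paper's extension lemma, which realizes the left adjoint to $\Yo{Q}$ as the left Kan extension $\mathtt{Lan}_{\Yo{Q}}(\Id)$ and verifies $(L,V)$-functoriality by exhibiting its right adjoint. The converse, however, contains a genuine circularity at the residuation step. You propose to obtain the right adjoints $x \limp (=)$ and $x \rimp (=)$ ``from cocompleteness by representing the presheaves $y \mapsto a((x,y),z)$'', and then to deduce cocontinuity of $\mult$ from the existence of these adjoints. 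But cocompleteness of a $V$-category does not make contravariant presheaves representable: already for $V = \mathbf{2}$, a down-set in a complete lattice is representable iff it is principal, and the down-set $\{y \,:\, x \mult y \le z\}$ is principal (contains its own supremum) precisely when $x \mult (-)$ preserves the relevant suprema --- which is exactly the cocontinuity you are trying to prove. Remark \ref{LimpRimp} produces those adjoints by a colimit formula only \emph{after} cocontinuity of $\mult$ is known; in your order of argument nothing supplies the representing objects. Note also that weighted colimits in $\Multi{V}$ have universal properties governing maps \emph{out of} the colimit (they characterize $b(Lg(\underline{z}),y)$), whereas $a(y, x\limp z) = a(x \mult y, z)$ is a condition on maps \emph{into} $x \limp z$, so no choice of $(L,V)$-weight can manufacture it directly.

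The paper closes precisely this gap by exploiting injectivity itself rather than bare cocompleteness: since $Q$ is representable, $\mathbb{D}(Q)$ is a $V$-quantale under Day convolution and $\Yo{Q}$ is strong monoidal and fully faithful in $\Multi{V}$; injectivity yields a retraction $h$ with $h\Yo{Q} = \Id$, which by Theorem \ref{Coco} must be $\colim(-,\Id)$ and which is lax monoidal by Proposition \ref{Fully-Faithful}; the computation $x \mult h(j) \le h(\Yo{Q}(x) \mult_D j) = \colim(j, x \mult (=))$ then gives cocontinuity of $\mult$ directly, and the residuals only come afterwards. You should replace your residuation step by this Day-convolution/retraction argument or an equivalent use of the injectivity hypothesis. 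A smaller point: your by-hand construction of $\mult$, $u_X$ and the inductive proof of representability re-derives a result the paper simply cites (cocomplete $(L,V)$-categories are representable, \cite{Chikhladze2015}); that part is workable in principle --- and you rightly flag it as delicate --- but it can be outsourced.
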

\begin{re}
	 Since we defined $V$-quantales to be monoids in the monoidal category $\Alg{V}$, which is equivalent to the category of separated cocomplete $V$-categories, we restrict our attention to separated $(L,V)$-categories. This also allows us to avoid pseudo-algebras.
\end{re}
Before we give a direct proof, we state a useful lemma.
\begin{lem}
	Let $Q$ be a $V$-quantale and $(Y,b)$ be an $(L,V)$-category. Let $f :Y \rightarrow Q$ be an $(L,V)$-functor. Then there exists an $(L,V)$-functor $g : \mathbb{D}_L(Y) \rightarrow Q$ that makes the following diagram commute
		\[
	\begin{tikzcd}
	Y \arrow[d, "\Yo{Y}", swap] \arrow[r, "f"] & Q \\
	\mathbb{D}_L(Y).  \arrow[ur, dotted, "g" , swap] &
	\end{tikzcd}
	\]
\end{lem}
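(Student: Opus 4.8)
The plan is to take $g$ to be the left Kan extension $\mathtt{Lan}_y(f)$ of $f$ along the Yoneda embedding $\Yo{Y}$ and to check that it restricts to $f$ along $\Yo{Y}$. For this to be available I first need $Q$, regarded as an $(L,V)$-category, to be cocomplete, since that is the hypothesis under which $\mathtt{Lan}_y(f) : \mathbb{D}_L(Y) \to Q$ was constructed.

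To secure cocompleteness I exploit representability. As a $V$-quantale, $Q$ has a cocomplete underlying $V$-category (Remark \ref{LimpRimp}), so it has all conical colimits and all copowers; write $(Q, c\cdot\alpha)$ for its representable $(L,V)$-structure, with $\alpha$ the multiplication. Each presheaf $\psi \in \mathbb{D}_L(Y)$ is the mate of an $(L,V)$-distributor $Y \kmodto E$, and the value of $g$ at $\psi$ is the weighted colimit $\Tcolim{L}(\psi, f)(1)$. By Corollary \ref{Deco} this is computed in the underlying $V$-category as
$$ g(\psi) \simeq \bigvee_{\underline{w} \in LY} \psi(\underline{w}) \odot \alpha Lf(\underline{w}). $$
Since $Q$ is $V$-cocomplete the right-hand side always exists, and a routine check that it satisfies the universal property shows that the colimit exists; hence $Q$ is $(L,V)$-cocomplete and $g$ is a well-defined $(L,V)$-functor.

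It remains to verify $g\,\Yo{Y} \simeq f$. Evaluating the formula at the representable presheaf $\Yo{Y}(y) = b(-,y)$ gives
$$ g(\Yo{Y}(y)) \simeq \bigvee_{\underline{w} \in LY} b(\underline{w},y) \odot \alpha Lf(\underline{w}). $$
I check the two inequalities. For $f(y) \leq g(\Yo{Y}(y))$, the singleton list $\underline{w} = (y)$ contributes $b(y,y) \odot f(y) \geq k \odot f(y) = f(y)$, using $e_Y^{\circ} \leq b$ and monotonicity of the copower in its scalar. For $g(\Yo{Y}(y)) \leq f(y)$, the copower adjunction reduces each summand's bound $b(\underline{w},y) \odot \alpha Lf(\underline{w}) \leq f(y)$ to the inequality $b(\underline{w},y) \leq c(\alpha Lf(\underline{w}), f(y)) = (c\cdot\alpha)(Lf(\underline{w}), f(y))$, which is exactly the statement that $f$ is an $(L,V)$-functor. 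Thus $g(\Yo{Y}(y)) \simeq f(y)$, and as $Q$ is separated this isomorphism is an equality, so the triangle commutes.

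The main obstacle is the first step: upgrading the $V$-cocompleteness that the quantale structure provides to the $(L,V)$-cocompleteness that the Kan extension requires. This is where representability does the essential work, letting me reduce the asymmetric presheaf-weighted $(L,V)$-colimits to conical colimits and copowers in the underlying $V$-category through Corollary \ref{Deco}. Once the pointwise formula is in place the commutativity is a short computation relying only on the unit law $e_Y^{\circ} \leq b$ and the defining inequality of an $(L,V)$-functor; the care needed is bookkeeping with the representable structure $c\cdot\alpha$ rather than any genuine difficulty.
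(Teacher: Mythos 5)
You take the same route as the paper---define $g$ as the left Kan extension of $f$ along $\Yo{Y}$, compute it pointwise via Corollary \ref{Deco}, and check commutativity at representables (your two-inequality verification of $g\,\Yo{Y}\simeq f$ is fine, and is in fact more detailed than the paper's, which just cites the Yoneda lemma)---but there is a genuine gap at the step you dismiss as ``a routine check'': that the pointwise formula satisfies the $(L,V)$-universal property and that the resulting assignment is an $(L,V)$-functor $\mathbb{D}_L(Y)\to Q$. Proving this is the real content of the lemma and occupies essentially all of the paper's proof. In $\Multi{V}$ the universal property of $\Tcolim{L}(\psi,f)$ with weight $\psi\colon Y\kmodto E$ is a condition at lists of \emph{every} length: for all $n$ and all $w\in Q$ one needs
\begin{displaymath}
Q\bigl(\underbrace{g(\psi)\mult\cdots\mult g(\psi)}_{n},\,w\bigr)\;=\;\bigwedge_{\underline{x}\in LY}\bigl[\hat{\psi}(\underline{x},(1,\dots,1)),\,f_{\circledast}(\underline{x},w)\bigr],
\end{displaymath}
and likewise $(L,V)$-functoriality of $g$ means $\mathbb{D}_L(Y)[\underline{j},j']\le Q(g(j_1)\mult\cdots\mult g(j_n),g(j'))$. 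Your argument only ever verifies the $n=1$ (i.e., $V$-enriched) instance, which is what $V$-cocompleteness gives. To pass to arbitrary $n$ one must distribute $\mult$ over the suprema of copowers in $g(\psi)=\bigvee_{\underline{w}}\psi(\underline{w})\odot\alpha Lf(\underline{w})$, i.e., one must use that the multiplication of $Q$ is cocontinuous in each variable---the axiom that distinguishes a $V$-quantale from a mere cocomplete monoidal $V$-category. Tellingly, your proof never invokes this axiom: as written it would apply to any representable $(L,V)$-category with cocomplete underlying $V$-category, and via Theorem \ref{KZ} would make all such objects injective, contradicting the paper's theorem that the injectives are exactly the $V$-quantales.

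The paper closes this gap with a device worth noting: rather than verifying the universal property directly, it defines $R(w)=Q(Lf(-),w)$, which is an $(L,V)$-functor automatically because it is the mate of the distributor $f_{\circledast}$ (Remark \ref{mate}), and then computes $g_{\circledast}(\underline{j},w)=R^{\circledast}(\underline{j},w)$; the pivotal step in that computation is precisely ``since the multiplication of $Q$ preserves colimits'', after which the $(L,V)$-structure on $\mathbb{D}_L(Y)$ is recognized on the nose. Since $g_{\circledast}$ then coincides with an $(L,V)$-distributor, $g$ is an $(L,V)$-functor by the $(L,V)$-analogue of Corollary \ref{DistFun}. Your outline becomes a proof if you replace the ``routine check'' by this adjunction argument (or by a direct induction on list length using cocontinuity of $\mult$), making explicit where the quantale axiom enters.
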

\begin{proof}
	Let $g = \mathtt{Lan}_{\Yo{X}}(f)$. Since $\mathtt{Lan}_{\Yo{X}}(f) = \Tcolim{L}(y_{\circledast},f)(1)$, which. by Proposition \ref{LCol}, is the same as $\colim(e_1^{\circ} \cdot \hat{y_{\circledast}}, \alpha Lf )$ and $Q$ is a $V$-quantale, $g$ is well defined (as a function). Moreover it makes the diagram commute. \\
	If we prove that there exists an $(L,V)$-functor $R : Q \rightarrow \mathbb{D}_L(Y)$ such that $R^{\circledast} \simeq g_{\circledast}$, we can conclude that $g$ is an $(L,V)$-functor too.\\
	By unravelling the definition of colimit, we have that
	\begin{alignat*}{2}
	g(j) &= \bigvee_{\underline{y} \in LY} \mathbb{D}_L(Y)[L\Yo{X}(\underline{y}),j] \odot (f(y_1) \mult ... \mult f(y_n)) \\
	& \mbox{(by the Yoneda lemma)} \\
	& = \bigvee_{\underline{y} \in LY} j(\underline{y}) \odot (f(y_1) \mult ... \mult f(y_n)).
	\end{alignat*}
	Define $R(w) = Q(Lf(-), w)$. Since $R$ is the mate of $f_\circledast$, by Remark \ref{mate} it follows that $R$ is an $(L,V)$-functor. In order to conclude, we have to show that
	$$g_\circledast(\underline{j},w) = R^\circledast(\underline{j},w).$$
	Without loss of generality, we can assume that $\underline{j} = (j_1,j_2).$\\
	We compute:
	\begin{alignat*}{2}
	g_\circledast(\underline{j},w)& = Q(g(j_1) \mult g(j_2) ,w) \\
	& = Q(\bigvee_{\underline{y} \in LY} j_1(\underline{y}) \odot (f(y_1) \mult ... \mult f(y_n)) \mult \bigvee_{\underline{z} \in LY} j_2(\underline{z}) \odot (f(z_1) \mult ... \mult f(z_n)), w ) \\
	& \mbox{(since the multiplication of $Q$ preserves colimits)} \\
	& = \bigwedge_{\underline{y} \in LY} \bigwedge_{\underline{z} \in LY} [ j_1(\underline{y}) \otimes j_2(\underline{z}),Q((f(y_1) \mult ... \mult f(y_n)) \mult (f(z_1) \mult ... \mult f(z_n)), w)  ] \\
	&= \bigwedge_{(\underline{y}, \underline{z}) \in L^2Y}  [Lj(\underline{y}, \underline{z}), R(w)(\underline{y}; \underline{z})] \\
	& \mbox{(by the definition of the $(L,V)$-structure on $\mathbb{D}_L(Y)$, as explained in Remark \ref{DLstructure})} \\
	&= \mathbb{D}_L(Y)[\underline{j},R(w)]\\
	& = R^\circledast(\underline{j},w).
	\end{alignat*}
	Thus $g$ is an $(L,V)$-functor.\\
\end{proof}
\begin{proof}(Of the theorem).\\
	$\Leftarrow).$ Suppose that $Q$ is a $V$-quantale and let $g : (X,a) \rightarrow Q$ be an $(L,V)$-functor. Suppose that $f : (X,a) \rightarrow (Y,b)$ is a fully faithful $(L,V)$-functor. Consider the Yoneda embedding $\Yo{Y} : Y \rightarrow \mathbb{D}_L(Y)$. By the previous lemma, there exists an extension such that the following diagram commutes
	\[
	\begin{tikzcd}
	 X \arrow[r, "g"] \arrow[d,"f", swap] & Q \\
	 Y \arrow[d, "\Yo{Y}", swap] & \\
	 \mathbb{D}_L(Y). \arrow[uur, dotted, "" ] &
	\end{tikzcd}
	\]
	Hence $Q$ is injective.\par\medskip
$\Rightarrow).$ Suppose that $Q$ is injective. By Theorem \ref{KZ}, it follows that $Q$ is a cococomplete $(L,V)$-category, hence it is representable. Since the $2$-functor
	$$(-)_0 : \Multi{V} \rightarrow \Vcat$$
	sends injective objects in $\Multi{V}$ to injective objects in $\Vcat$ and since injectives in $\Vcat$ are cocomplete $V$-categories, it follows that $Q$ is a cocomplete $V$-category. It remains to show that the monoidal structure on $Q$ preserves $V$-colimits (in each variable).\\
	Since $Q$ is representable, $ \mathbb{D}(Q)$ becomes a $V$-quantale with respect to Day's convolution product (as explained in Remark \ref{Day}); moreover, the Yoneda embedding
	$$ \Yo{Q} : Q \rightarrow \mathbb{D}(Q)$$
	is a strong monoidal $V$-functor. Thus, we have
	\begin{alignat*}{2}
	Q(\underline{x},w) &= Q(x_1 \mult ... \mult x_n, w) \\
	&= \mathbb{D}(Q)(\Yo{Q}(x_1 \mult ... \mult x_n), \Yo{Q}(w)) \\
	&= \mathbb{D}(Q)(\Yo{Q}(x_1) \mult_D ... \mult_D \Yo{Q}(x_n), \Yo{Q}(w))  \\
	&=  \mathbb{D}(Q)(L\Yo{Q}(\underline{x}), \Yo{Q}(w)),
	\end{alignat*}
	therefore $\Yo{Q}$ is fully faithful also in $\Multi{V}$.\\
	Because $Q$ is injective, there exists an extension
\[
	\begin{tikzcd}
	Q \arrow[r,equal] \arrow[d,  "\Yo{Q}", swap] & Q \\
	\mathbb{D}(Q) \arrow[ur, dotted, "h", swap]
	\end{tikzcd}
\]
	with $h \Yo{Q} = \Id_Q$.\\
	Since the same holds in $\Vcat$, by Theorem \ref{Coco}, $h$ must be of the form
	\begin{alignat*}{2}
	h  :\mathbb{D}(Q) &\rightarrow Q. \\
		j        &\mapsto \colim(j,\Id)
	\end{alignat*}
	In order to show that $Q$ is a $V$-quantale, by Theorem \ref{Coco}, it is sufficient to show that the monoidal structure preserves colimits of the form $\colim(j,\Id) $, for $j \in \mathbb{D}(Q)$. Let $x \in Q$ and $j\in \mathbb{D}(Q)$. We have
	\begin{alignat*}{2}
	x \mult h(j) &= h(\Yo{Q}(x)) \mult h(j) \\
	& \mbox{ (since $h$ is lax monoidal) } \\
	& \leq h(\Yo{Q}(x) \mult_D j) \\
	& \mbox{ (since every presheaf is the colimit of the Yoneda embedding)} \\
	& = h( \Yo{Q}(x) \mult_D \colim(j, \Yo{Q})) \\
	& \mbox{ (since Day's convolution preserves colimits) } \\
	& = h( \colim(j, \Yo{Q}(x) \mult_D \Yo{Q}(=))) \\
	& \mbox{ (since the Yoneda embedding is strong monoidal) } \\
	& = h(\colim(j, \Yo{Q}(x \mult=))) \\
	& \mbox{ (since $h$ preserves colimits being left adjoint to $\Yo{Q} $ in $\Vcat)$ }\\
	& = \colim(j, h(\Yo{Q}(x \mult=))) \\
	& = \colim(j, x\mult (=)).
	\end{alignat*}
	Hence, because $\colim(j, x\mult (=)) \leq x \mult \colim(j,\Id)$ follows from the universal property of colimit, we have
	$$ x \mult \colim(j,\Id) = \colim(j, x\mult (=)).$$
	Thus $Q$ is a $V$-quantale.\\
\end{proof}
With the aid of the previous theorem, we can characterize those full subcategories of $\Multi{V}$ whose injectives are $V$-quantales in the following way.
\begin{teorema}\label{Inj}
	Let $C$ be a full subcategory of $\Multi{V}$ which contains all $V$-quantales, and let $(X,a)$ be a separated object in $C$. Then $(X,a)$ is injective iff it is a $V$-quantale.
\end{teorema}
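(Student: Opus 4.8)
The plan is to deduce both implications from the preceding theorem characterizing the injective objects of $\Multi{V}$ (with respect to fully faithful functors) as precisely the $V$-quantales, together with Theorem \ref{KZ}. The two facts that make the reduction work are that $C$ is \emph{full} in $\Multi{V}$ and that the presheaf $(L,V)$-category $\mathbb{D}_L(X)$ of any $(X,a)$ is itself a $V$-quantale, hence automatically an object of $C$. Throughout, injectivity of an object of $C$ is tested against fully faithful functors between objects of $C$.

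First I would treat the direction ``$V$-quantale $\Rightarrow$ injective''. Suppose $(X,a)$ is a $V$-quantale, and let $f : (Y,b) \to (Z,c)$ be a fully faithful functor in $C$ together with an arbitrary morphism $g : (Y,b) \to (X,a)$ of $C$. Since $C$ is full, $f$ is fully faithful in $\Multi{V}$ and $g$ is a morphism of $\Multi{V}$; by the preceding theorem $(X,a)$ is injective in $\Multi{V}$, so there is an extension $h : (Z,c) \to (X,a)$ with $hf = g$. As $(Z,c)$ and $(X,a)$ both lie in the full subcategory $C$, the functor $h$ is already a morphism of $C$, and hence $(X,a)$ is injective in $C$.

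The converse is the substantive direction. Assume $(X,a)$ is injective in $C$ and consider the Yoneda embedding $\Yo{X} : (X,a) \to \mathbb{D}_L(X)$, which is fully faithful. The crucial point is that $\mathbb{D}_L(X)$ is a separated and cocomplete $(L,V)$-category --- it is the free cocompletion of $(X,a)$ --- so by Theorem \ref{KZ} and the preceding theorem it is a $V$-quantale, and therefore belongs to $C$. Thus $\Yo{X}$ is a fully faithful morphism between two objects of $C$, and applying the injectivity of $(X,a)$ in $C$ to the lifting problem
\[
\begin{tikzcd}
(X,a) \arrow[r, equal] \arrow[d, "\Yo{X}", swap] & (X,a) \\
\mathbb{D}_L(X) \arrow[ur, dotted, "r", swap] &
\end{tikzcd}
\]
produces a retraction $r : \mathbb{D}_L(X) \to (X,a)$ with $r\,\Yo{X} = \Id_{(X,a)}$. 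Hence $(X,a)$ is a retract, in $\Multi{V}$, of the $V$-quantale $\mathbb{D}_L(X)$; since retracts of injective objects are injective and $\mathbb{D}_L(X)$ is injective in $\Multi{V}$, the object $(X,a)$ is injective in $\Multi{V}$. A final application of the preceding theorem, using that $(X,a)$ is separated, shows that $(X,a)$ is a $V$-quantale.

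The main obstacle --- and the reason the hypothesis on $C$ is essential --- is that injectivity \emph{in $C$} only solves lifting problems whose codomain lies in $C$, so one cannot freely embed $(X,a)$ into an arbitrary ambient injective and split the embedding there. The argument succeeds precisely because $\mathbb{D}_L(X)$ is simultaneously injective in $\Multi{V}$ and, being a $V$-quantale, a member of every admissible $C$; verifying that the presheaf $(L,V)$-category is separated and cocomplete (hence a $V$-quantale) is the technical core on which the whole reduction rests.
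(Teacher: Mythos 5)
Your proof is correct and takes essentially the same route as the paper: both directions reduce to the preceding characterization of injectives in $\Multi{V}$, and the converse hinges on exactly the same lifting problem along $\Yo{X} : (X,a) \rightarrow \mathbb{D}_L(X)$, using that $\mathbb{D}_L(X)$ is a $V$-quantale and hence automatically an object of $C$ (a point you make explicit, which the paper leaves implicit). The only difference is cosmetic, in the final step: the paper reads the retraction $h\,\Yo{X} = \Id$ directly as a $\mathbb{D}_L$-algebra structure (valid since $\mathbb{D}_L$ is of Kock--Z\"oberlein type), whereas you invoke the elementary retract-of-injective lemma and then apply the preceding theorem to the separated injective $(X,a)$ --- the two arguments amount to the same thing.
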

\begin{proof}
	$\Leftarrow).$ $V$-quantales are injectives in $\Multi{V}$ and $C$ is a full subcategory of it. Thus, if $X$ is a $V$-quantale, then it is injective in $C$ too.\par\medskip
$\Rightarrow).$ If $X$ is an injective object in $C$, then we have the following diagram in $\Multi{V}$:
\[
	\begin{tikzcd}
	X \arrow[r, equal]  \arrow[d,  "\Yo{X}", swap]  &  X\\
	  \mathbb{D}_L(X). \arrow[ur, dashed, "\exists h", swap]
	\end{tikzcd}
\]
	The commutativity of the diagram implies that $h  \Yo{X} = \Id.$ This means that $ h :  \mathbb{D}_L(X) \rightarrow X$ defines an algebra structure for the monad $ \mathbb{D}_L(-)$, but because algebras for this monads are $V$-quantales, this implies that $X$ is a $V$-quantale too.
	\end{proof}
As a corollary of the previous theorem we get a characterization of injectives as $V$-quantales in each of the categories displayed in the following diagram.
\[
\begin{tikzcd}
& \Multi{V} &  \\
& \Pro \arrow[u] & \\
\mathtt{Mon}(V \mbox{-}\mathtt{Cat})_{\mathtt{lax}}  \arrow[ur] & &V \mbox{-}\mathtt{QBalg} \arrow[ul] \\
& \mathtt{ResMon}(V \mbox{-}\mathtt{Cat})_{\mathtt{lax}} \arrow[ur] \arrow[ul]
\end{tikzcd}
\]
\begin{re}\label{Discussion}
	Notice that, when $V = \mathbf{2}$, the previous diagram and Theorem \ref{Inj} allow us to recover both Theorem $4.1$ of \cite{InjHulls} and Proposition $4$ of \cite{2016THECO}.
\end{re}
In order to build injective hulls we mimic what we did for $\Vcat$ and introduce the notion of \textit{dense morphism}. The difference here is that, due to the nature colimits in our "roof" category have, in our definition we consider $\Tcolim{L}$ instead of $\colim$.
\begin{defin}
	Let $(X,a)$ be in $\Multi{V}$. A fully faithful $(L,V)$-functor $i : (X,a )\rightarrow Q$, where $Q$ is a $V$-quantale (seen as an object of $\Multi{V}$) is called \textit{dense}, if we can write every $q \in Q$ as
	$$ q = \limit
	( i^{*} \cdot q_{*}, i), \ \ q =  \Tcolim{L}( i_{\circledast} \circ q^{\circledast},i)(1).$$
	If only the latter holds, we call $f$ \textit{pre-dense}.
\end{defin}
\begin{re}
	Notice that the limit in the previous definition is a $V$-enriched limit. We consider the $2$-functor
	$$ (-)_0 : \Multi{V} \rightarrow \Vcat,$$
	and take the limit accordingly.
\end{re}
\begin{re}\label{Discussion2}
	Using the decomposition of limits (see Remark \ref{Deco1}), we can write the first condition as
	$$\limit( i^{*} \cdot q_{*}, i) \simeq  \bigwedge_{x \in X} Q(q,i(x)) \pitchfork i(x)$$
	which in the ordered case it reduces to the more familiar formula
	$$ q =  \bigwedge_{\{x \mbox{ : } q \leq i(x)\}} i(x).$$
	If we unravel the second condition and use Corollary \ref{Deco}, we have
	\begin{alignat*}{2}
	\Tcolim{L}( i_{\circledast} \circ q^{\circledast},i)(1)   & \simeq  \bigvee_{\underline{x} \in LX} Q(\alpha Li(\underline{x}),q) \odot \alpha Li(\underline{x}) \\
	& = \bigvee_{\underline{x} \in LX} Q(i(x_1) \mult ... \mult i(x_n),q) \odot (i(x_1) \mult ... \mult i(x_n))
	\end{alignat*}
	which in the order case it reduces to
	$$q = \bigvee_{\{\underline{x} \mbox{ : } i(x_1) \mult ... \mult i(x_n)\leq q\} } i(x_1) \mult ... \mult i(x_n).$$
	The last condition shows how, in the ordered case, $X$ generates the quantale $Q$. This also shows how, in the enriched case, $(L,V)$-categories (more specifically the theory of $(L,V)$-colimits) are able to "capture", in an elegant and concise way, what we might call "monoidal colimits". That is to say, we provide a way to capture the idea of "$V$-quantale generated by a $V$-category", an intuitive construction that would be difficult to formalize without $(L,V)$-categories.
\end{re}
\begin{defin}
	A morphism $i : (X,a )\rightarrow Q$, where $Q$ is a $V$-quantale and $X$ is a $V$-quantum B-algebra, is dense if it is dense in $\Multi{V}.$
\end{defin}
\begin{re}
	We stress the fact that we are considering embeddings of $(L,V)$-categories into $V$-quantales which are cocomplete $(L,V)$-categories and complete $V$-categories.
\end{re}
\begin{lem}
	Let $i :  (X,a) \rightarrow (Y,b)$ be a fully faithful $V$-functor between $V$-categories, with $(Y,b)$ complete. Then
	$$ T  : (Y,b) \rightarrow (Y,b), \ \ y \mapsto \limit(i^{*} \cdot y_{\star}, i),$$
	is a $V$-functor.
\end{lem}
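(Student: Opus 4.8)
The plan is to reduce the statement that $T$ is a $V$-functor, i.e.\ that $b(y_1,y_2) \le b(T(y_1),T(y_2))$ for all $y_1,y_2 \in Y$, to a single ``key inequality'' expressing that the representables $i(x)$ are reached by $T$. First I would record the pointwise description of $T$. Since $(Y,b)$ is complete it is in particular powered and has conical infima, and the weight $i^{*}\cdot y_{\star}$ has value $b(y,i(x))$ at $x \in X$, so by the decomposition of weighted limits (Remark~\ref{Deco1}) together with the defining universal property of the limit one gets
$$T(y) = \limit(i^{*}\cdot y_{\star}, i) \simeq \bigwedge_{x \in X} b(y,i(x)) \pitchfork i(x), \qquad b(y',T(y)) = \bigwedge_{x \in X}[\,b(y,i(x)),\,b(y',i(x))\,].$$

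Next I would unfold the target. Substituting $y' = T(y_1)$ into the universal property for $T(y_2)$ gives $b(T(y_1),T(y_2)) = \bigwedge_{x}[\,b(y_2,i(x)),\,b(T(y_1),i(x))\,]$, so, by the internal-hom adjunction, it suffices to prove that $b(y_1,y_2) \otimes b(y_2,i(x)) \le b(T(y_1),i(x))$ for every $x \in X$. By transitivity of $b$ the left-hand side is below $b(y_1,i(x))$, so the whole claim collapses to the key inequality
$$b(y_1,i(x)) \le b(T(y_1),i(x)) \qquad \text{for all } x \in X.$$
Morally this says that $y_1 \le i(x)$ forces $T(y_1) \le i(x)$, i.e.\ that each representable is a fixed point of the closure, and it is the only place where the actual construction of $T$ (rather than formal nonsense) enters; I expect this to be the main obstacle.

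To establish the key inequality I would argue through the conical infimum. Writing $w_x = b(y_1,i(x)) \pitchfork i(x)$, so that $T(y_1) = \bigwedge_x w_x$, I would first note that a conical infimum comes with projections: evaluating the defining identity $b(-,\bigwedge_x w_x) = \bigwedge_x b(-,w_x)$ at $T(y_1)$ itself yields $k \le b(T(y_1),T(y_1)) = \bigwedge_x b(T(y_1),w_x)$, hence $k \le b(T(y_1),w_x)$ for every $x$. Transitivity then gives $b(T(y_1),i(x)) \ge b(T(y_1),w_x)\otimes b(w_x,i(x)) \ge b(w_x,i(x))$. It therefore remains to see that $v \le b(v \pitchfork z, z)$ for $v = b(y_1,i(x))$ and $z = i(x)$; this follows from the power adjunction $b(y'',v\pitchfork z) = [\,v,\,b(y'',z)\,]$ by taking $y'' = v \pitchfork z$, since then $k \le b(v\pitchfork z, v\pitchfork z) = [\,v,\,b(v\pitchfork z, z)\,]$ gives $v = k \otimes v \le b(v \pitchfork z, z)$.

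Chaining these estimates produces $b(y_1,i(x)) = v \le b(w_x,i(x)) \le b(T(y_1),i(x))$, which closes the key inequality and hence the whole argument. I would remark in passing that full faithfulness of $i$ plays no role in this particular claim; it is the completeness of $(Y,b)$ that does all the work, with fully faithfulness reserved for the later use of $T$ (its idempotency and the density/essentialness analysis).
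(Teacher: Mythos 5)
Your proof is correct, but it takes a genuinely different route from the paper's. The paper disposes of the lemma in one stroke by factoring $T$ as a composite of four $V$-functors already known to exist,
$T = \limit \circ \mathbb{U}(i) \circ i^{-1} \circ \CoYo{Y}$, i.e.\ $y \mapsto y_{*} \mapsto i^{*}\cdot y_{*} \mapsto i_{*}\cdot i^{*}\cdot y_{*} \mapsto \limit(i_{*}\cdot i^{*}\cdot y_{*}, \Id)$, where the last functor is the right adjoint to the co-Yoneda embedding supplied by completeness (Theorem \ref{Coco}); no pointwise computation appears, though the identification $\limit(i_{*}\cdot l,\Id)\simeq\limit(l,i)$ is used silently. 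You instead verify $V$-functoriality directly from the universal property $b(y',T(y))=\bigwedge_{x}[\,b(y,i(x)),\,b(y',i(x))\,]$, reducing via the hom-tensor adjunction and transitivity of $b$ to the key inequality $b(y_1,i(x))\le b(T(y_1),i(x))$; this is sound throughout and has the advantage of being self-contained, at the cost of more bookkeeping. One simplification worth noting: your key inequality does not need the detour through the conical-infimum decomposition and the counit of the power adjunction. Plugging $y'=T(y_1)$ into the universal property of $T(y_1)$ itself and using reflexivity $k\le b(T(y_1),T(y_1))$ gives $k\le[\,b(y_1,i(x)),\,b(T(y_1),i(x))\,]$ for every $x$, which is exactly the key inequality; this collapses your third paragraph to one line and removes the (harmless but inessential) reliance on Remark \ref{Deco1}. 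Your closing observation that full faithfulness of $i$ plays no role here is accurate and consistent with the paper: the composite in the paper's proof never invokes it either, and it first enters in the subsequent lemma establishing $T\cdot i\simeq i$.
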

\begin{proof}
	Notice that $T$ can be written as the composite of the following $V$-functors
	\[
	\xymatrix{
		Y \ar[r]^{ \CoYo{Y}}  & \mathbb{U}(Y) \ar[r]^{i^{-1}} & \mathbb{U}(X)  \ar[r]^{ \mathbb{U}(i)} & \mathbb{U}(Y) \ar[r]^{ \limit} & Y,
	}
	\]
	where:
	\begin{itemize}
		\item $\CoYo{Y} : y \mapsto y_{*} = b(y, =);$
		\item $ i^{-1} : \psi \mapsto i^{*} \cdot \psi;$
		\item $ \mathbb{U}(i): \phi \mapsto i_{*} \cdot \phi;$
		\item $ \limit : \gamma \mapsto \limit(\gamma, \Id).$
	\end{itemize}

\end{proof}
\begin{lem}
	Let $i :  (X,a) \rightarrow (Y,b)$ be a fully faithful $V$-functor between $V$-categories, with $(Y,b)$ complete. Then $T \cdot i \simeq i$, meaning that, for all $x \in (X,a)$, $T(i(x)) \simeq \nobreak i(x)$.
\end{lem}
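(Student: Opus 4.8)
The plan is to unwind the definition of $T$ into a pointwise formula for the copresheaf $b(-,T(y))$ and then reduce the claim to a short Yoneda-style identity using full faithfulness of $i$. By definition $T(y) = \limit(i^{*}\cdot y_{*}, i)$ (with $y_{*} = b(y,=)$ the value of $\CoYo{Y}$), so first I would compute the weighting distributor $i^{*}\cdot y_{*} : K \kmodto X$. Since $y_{*}(\bullet,y') = b(y,y')$ and $i^{*}(y',x) = b(y',i(x))$, the composition law in $\Mat{V}$ gives
$$ (i^{*}\cdot y_{*})(\bullet, x) = \bigvee_{y' \in Y} b(y,y') \otimes b(y', i(x)) = (b\cdot b)(y, i(x)) = b(y, i(x)), $$
where $b\cdot b = b$ follows from $\Id \leq b$ and $b\cdot b \leq b$. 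Substituting this weight into the pointwise description of a weighted limit yields, for every $y' \in Y$,
$$ b(y', T(y)) = \bigwedge_{x \in X} [\, b(y, i(x)),\, b(y', i(x)) \,]. $$

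Next I would specialise to $y = i(x_{0})$ and use that $i$ is fully faithful, so $b(i(x_{0}), i(x)) = a(x_{0}, x)$, obtaining
$$ b(y', T(i(x_{0}))) = \bigwedge_{x \in X} [\, a(x_{0}, x),\, b(y', i(x)) \,]. $$
The heart of the argument is the claim that this infimum equals $b(y', i(x_{0}))$. The inequality $\leq$ is immediate: evaluating the infimum at $x = x_{0}$ and using $k \leq a(x_{0}, x_{0})$ gives $[\,a(x_{0},x_{0}), b(y', i(x_{0}))\,] \leq [\,k, b(y', i(x_{0}))\,] = b(y', i(x_{0}))$. For $\geq$ I would verify that $b(y', i(x_{0}))$ is a lower bound of the family, i.e. that $b(y', i(x_{0})) \otimes a(x_{0}, x) \leq b(y', i(x))$ for every $x$; this holds because $i$ is a $V$-functor, so $a(x_{0}, x) \leq b(i(x_{0}), i(x))$, and then $b(y', i(x_{0})) \otimes b(i(x_{0}), i(x)) \leq (b\cdot b)(y', i(x)) \leq b(y', i(x))$. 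Combining the two inequalities gives $b(y', T(i(x_{0}))) = b(y', i(x_{0}))$ for all $y' \in Y$.

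Finally I would deduce the isomorphism from this equality of representable copresheaves: setting $y' = T(i(x_{0}))$ gives $k \leq b(T(i(x_{0})), T(i(x_{0}))) = b(T(i(x_{0})), i(x_{0}))$, and setting $y' = i(x_{0})$ gives $k \leq b(i(x_{0}), i(x_{0})) = b(i(x_{0}), T(i(x_{0})))$, so there are comparison morphisms in both directions and hence $T(i(x_{0})) \simeq i(x_{0})$. I expect the only delicate point to be the bookkeeping in the first step, namely correctly identifying the weight $i^{*}\cdot y_{*}$ as $b(y, i(-))$ and matching it against the limit formula; everything afterwards is a two-line computation. Indeed, this lemma is the exact $V$-enriched analogue of the fixed-point identity $T(\Yo{X}(x)) \simeq \Yo{X}(x)$ established in the \emph{intermezzo} for the Isbell monad, and it is what makes the Yoneda embedding factor through the fixed points of $T$.
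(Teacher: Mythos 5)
Your proof is correct and is essentially the paper's own argument unpacked pointwise: identifying the weight $i^{*}\cdot i(x_{0})_{*}$ with the representable $x_{0*}=a(x_{0},-)$ via full faithfulness is exactly the paper's first step, and your two-inequality verification that $\bigwedge_{x\in X}[a(x_{0},x),\,b(y',i(x))]=b(y',i(x_{0}))$ is the pointwise form of the paper's terse claim that $x_{*}\cdot i(x)^{*}\leq i^{*}$ holds ``in a universal way''. No gaps.
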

\begin{proof}
	The result follows when we notice that, since $i$ is fully faithful,
	$$ i^{*}\cdot i(x)_{*} = x_{*},$$
	and thus that $x_{*}\cdot i(x)^{*} = x_{*} \cdot x^{*} \cdot i^{*} \leq i^{*},$ in a universal way. This proves our desired result.
	\\
\end{proof}
\begin{prop}\label{Lax}
	Let $ i : (X,a )\rightarrow Q$, be a pre-dense strict morphism in $V$-$\mathtt{QBalg}$, where $Q$ is a $V$-quantale. \\Then $T : Q \rightarrow Q$ defined above is a lax monoidal monad.
\end{prop}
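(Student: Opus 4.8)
The plan is to check, in turn, that $T$ is a $V$-functor, an inflationary idempotent monad, and lax monoidal; only the last (submultiplicativity) is subtle, and it is exactly there that pre-density and strictness of $i$ are used. Monotonicity of $T$ is the preceding lemma, so it remains to identify the monad structure. Following the factorisation given in that lemma I would write $T=\Psi\circ\Phi$, where $\Phi\colon Q\to\mathbb{U}(X)$, $\Phi(q)=i^{*}\cdot q_{*}=Q(q,i(-))$, and $\Psi\colon\mathbb{U}(X)\to Q$, $\Psi(\phi)=\limit(\phi,i)$. The defining property of the weighted limit gives, for all $q\in Q$ and $\phi\in\mathbb{U}(X)$, the equality $Q(q,\Psi\phi)=\bigwedge_{x\in X}[\phi(x),Q(q,i(x))]=\mathbb{U}(X)(\Phi q,\phi)$, that is $\Phi\dashv\Psi$. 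Hence $T=\Psi\Phi$ is the monad of an adjunction, so $\Id_Q\le T$ (in particular $k_Q\le T(k_Q)$), and the triangle identities, read in the locally ordered $2$-category $\Vcat$, force $\Phi\Psi\Phi\simeq\Phi$ and $\Psi\Phi\Psi\simeq\Psi$, whence $T^2=\Psi\Phi\Psi\Phi\simeq\Psi\Phi=T$. I would also record here the closure-operator identity that does the real work below: since $T$ is an inflationary $V$-functor, for every $T$-closed $c$ one has $Q(Ta,c)=Q(a,c)$ (the inequality $\le$ from $a\le Ta$, the reverse from $Q(a,c)\le Q(Ta,Tc)=Q(Ta,c)$).

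For submultiplicativity I must show $T(x)\mult T(y)\le T(x\mult y)$. Decomposing the limit $T(x\mult y)$ as an infimum of powers (Remark \ref{Discussion2}), this reduces to proving, for each $w\in X$, that $Q(x\mult y,i(w))\le Q(T(x)\mult T(y),i(w))$. I would prove the stronger equality $Q(T(x)\mult T(y),i(w))=Q(x\mult y,i(w))$ by transposing across the residuals, granting the following claim: for every $b\in Q$ and $w\in X$, the residuals $b\rimp i(w)$ and $b\limp i(w)$ are $T$-closed. Assuming the claim, $Q(T(x)\mult T(y),i(w))=Q(T(x),T(y)\rimp i(w))=Q(x,T(y)\rimp i(w))=Q(x\mult T(y),i(w))=Q(T(y),x\limp i(w))=Q(y,x\limp i(w))=Q(x\mult y,i(w))$, the second and fifth equalities being the identity $Q(Ta,c)=Q(a,c)$ applied at the closed elements $T(y)\rimp i(w)$ and $x\limp i(w)$.

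The claim is where the hypotheses enter. Since $i$ is pre-dense I would expand $b\simeq\bigvee_{\underline{x}\in LX}Q(\alpha Li(\underline{x}),b)\odot\alpha Li(\underline{x})$ (the second pre-density condition, unravelled through Corollary \ref{Deco}). As $\mult$ is cocontinuous in $Q$, the contravariant functor $(-)\rimp i(w)$ carries this colimit to a limit, so $b\rimp i(w)\simeq\bigwedge_{\underline{x}}Q(\alpha Li(\underline{x}),b)\pitchfork(\alpha Li(\underline{x})\rimp i(w))$. Now $\alpha Li(\underline{x})\rimp i(w)=(i(x_1)\mult\cdots\mult i(x_n))\rimp i(w)=i(x_1)\rimp(\cdots\rimp(i(x_n)\rimp i(w)))$, and because $i$ is a strict morphism of $V$-quantum B-algebras it preserves $\rimp$, so this equals $i\bigl(x_1\rimp(\cdots\rimp(x_n\rimp w))\bigr)$, an element of the image of $i$ and hence $T$-closed by the lemma $T\cdot i\simeq i$. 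Finally $T$-closed elements are stable under powers and arbitrary infima (they are the image of the continuous right adjoint $\Psi$, i.e.\ a reflective subobject closed under limits), so $b\rimp i(w)$ is $T$-closed; the argument for $b\limp i(w)$ is symmetric, using $(a\mult b')\limp c=b'\limp(a\limp c)$.

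The formal half of the proof is routine once $T=\Psi\Phi$ is recognised. The main obstacle is the closure of the residuals $b\rimp i(w)$ and $b\limp i(w)$ under $T$: this is the enriched incarnation of the statement that $T$ is a quantic nucleus, and it is precisely here that one cannot dispense with either pre-density (to resolve an arbitrary $b$ into products of elements of $i(X)$) or strictness (to keep the resulting residuals inside the closed image $i(X)$). Everything else is bookkeeping with the residuation adjunctions.
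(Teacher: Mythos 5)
Your proof is correct, and its computational core is the same as the paper's: both arguments use pre-density to resolve an arbitrary element of $Q$ into a conical supremum of copowers of finite products $i(x_1)\mult\cdots\mult i(x_n)$, convert those products into iterated residuals, use strictness of $i$ to rewrite $i(x_1)\rimp(\cdots\rimp(i(x_n)\rimp i(w)))$ as $i(x_1\rimp\cdots\rimp(x_n\rimp w))$, and invoke the lemma $T\cdot i\simeq i$. The differences are organizational. For the monad laws, the paper displays exactly your adjunction in composed form (the chain $i^{-1}\dashv\mathbb{U}(i)$ and $\CoYo{Q}\dashv\limit$, whose composite is your $\Phi\dashv\Psi$), but then verifies $q\leq T(q)$ and $T^2=T$ by hand via the characterization that $k\leq Q(z,T(q))$ iff $Q(q,i(x))\leq Q(z,i(x))$ for all $x\in X$; your formal derivation from the adjunction is legitimate, since in the locally ordered setting every adjunction-induced monad is a closure operator and $Q$ is separated, so $\simeq$ becomes $=$. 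For submultiplicativity, the paper establishes $Q(q\mult w,i(x))\leq Q(T(q)\mult w,i(x))$ inside a single long limit calculation and then repeats the argument with $\limp$; by residuation this inequality is precisely your claim that $w\rimp i(x)$ (dually, $q\limp i(x)$) is $T$-closed, so your key lemma is an equivalent repackaging of the paper's inline step rather than a new idea. What your packaging buys is that the quantic-nucleus structure becomes explicit---you even obtain the stronger equality $Q(T(q)\mult T(w),i(x))=Q(q\mult w,i(x))$, i.e.\ $T(T(q)\mult T(w))=T(q\mult w)$, where the paper needs only the inequality---and the triple nested limits are replaced by the stability of $T$-closed elements under infima and powers, which holds as you say because the fixed points form the image of the right adjoint $\Psi$ (or by a two-line direct check). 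What the paper's version buys is that it never needs to isolate the class of closed elements at all, keeping the whole verification inside one weighted-limit computation.
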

\begin{proof}
	We want to prove that, for all $q,w\in Q$:
	\begin{itemize}
		\item $q \leq T(q);$
		\item  $k_Q \leq T(k_Q);$
		\item $T^2(q) = T(q);$
		\item $T(q) \mult T(w) \leq T(q \mult w).$
	\end{itemize}
	We have the following chain of adjunctions
	\begin{center}
	\begin{tikzcd}
	 \mathbb{U}(X) \ar[r,bend left,"\mathbb{U}(i)",""{name=A, below}] &   \mathbb{U}(Q)\ar[l,bend left," i^{-1} ", ""{name=B,above}] \ar[from=B, to=A, symbol=\dashv] \ar[r,bend left,"\limit",""{name=C, below}]  & Q \ar[l,bend left,"(-)_{\star} = \CoYo{Q} ",""{name=D,above}] \ar[from=D, to=C, symbol=\dashv]
	\end{tikzcd}
	\end{center}
	where $ i^{-1} : \psi \mapsto i^{*} \cdot \psi.$\\
	Since $T(q) \simeq \limit(i^{*} \cdot q_{*}, i),$ it follows that
	$$k \leq Q(z,T(q)) = \mathbb{U}(X)(i^{*}\cdot z_{*}, i^{*}\cdot q_{*})  \ \ \mbox{ iff, for all $x \in X,$ } Q(q,i(x)) \leq Q(z,i(x)).$$
	From this it follows immediately that $q \leq T(q)$, hence also $k_Q \leq T(k_Q).$ \\
	From the latter, since $T$ is a $V$-functor, it follows that $T(q) \leq T^2(q)$. For the other direction, we need to prove that
	$$ Q (q, i(x)) \leq Q(T^2(q), i(x)).$$
	Since $T$ is a $V$-functor, it follows from the previous lemma that
	$$  Q (q, i(x)) \leq Q(T(q) , T(i(x))) = Q(T(q), i(x)) \leq Q(T^2(q), i(x)).$$
	Let's prove that $T(q) \mult T(w) \leq T(q \mult w).$
	Let $x \in X$, by using the pre-denseness of $i$, we have
	$$Q(q \mult w, i(x)) = Q(q \mult \Tcolim{L}(\phi, \Id)(1), i(x)),$$
	where, for the sake of notation, $\phi = i_{\circledast}\circ b^{\circledast}$.\\
	By unravelling the definition of colimit, we get
	\begin{alignat*}{2}
	Q(q \mult w, i(x)) & = Q(q \mult \Tcolim{L}(\phi, i(y))(1), i(x)) \\
	& \mbox{(from Proposition \ref{LCol})} \\
	& = Q(q \mult \colim(e_1^{\circ} \cdot \phi, \alpha Li(\underline{y})), i(x)) \\
	&= \limit(e_1^{\circ} \cdot \phi,  Q(q \mult \alpha Li(\underline{y}), i(x)))  \\
	& = \limit(e_1^{\circ} \cdot \phi,  Q(q \mult (i(y_1) \mult ... \mult i(y_n)), i(x)))  \\
	& \mbox{(by inductively applying adjunction)} \\
	& = \limit(e_1^{\circ} \cdot \phi,  Q(q, i(y_1) \rimp ( ... \rimp i(y_n)\rimp i(x))) ) \\
	& (\mbox{by inductively applying $i(x \rimp y) = i(x) \rimp i(y)$})\\
	& = \limit(e_1^{\circ} \cdot \phi,  Q(q, i(y_1 \rimp ...\rimp (y_n\rimp x)) )) \\
	& \mbox{(being $T$ a $V$-functor and from the previous lemma)}\\
	& \leq \limit(e_1^{\circ} \cdot \phi,  Q(T(q), i(y_1 \rimp ...\rimp(y_n\rimp x))))\\
	& \mbox{(by inductively applying adjunction, plus $i(x \rimp y) = i(x) \rimp i(y)$ backward)} \\
	&\limit(e_1^{\circ} \cdot \phi, Q(T(q) \mult (i(y_1) \mult ... \mult i(y_n)), i(x)) ) \\
	& = Q(T(q) \mult w, i(x)).
	\end{alignat*}
	By repeating the same argument, but now expressing $T(q)$ as a weighted colimit, and by using $\limp$ instead of $\rimp$, we get
	$$Q(q \mult w, i(x)) \leq Q(T(q) \mult w, i(x)) \leq Q(T(q) \mult T(w), i(x)).$$
\end{proof}
\begin{cor}
	Let $ i : (X,a )\rightarrow Q$, be a pre-dense strict morphism in $V$-$\mathtt{QBalg}$, where $Q$ is a $V$-quantale. Then $i' : X \rightarrow Q^T$ is dense.
\end{cor}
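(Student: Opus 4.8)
The plan is to deduce this corollary from Proposition \ref{Lax} together with the theorem on lax monoidal monads. By Proposition \ref{Lax} the operator $T : Q \rightarrow Q$ is a lax monoidal monad, so that theorem makes $Q^T$ a $V$-quantale and exhibits the inclusion $\iota : Q^T \rightarrow Q$ as a $V$-functor which is right adjoint to the reflector $\pi_T = T : Q \rightarrow Q^T$. The lemma giving $T \cdot i \simeq i$ shows that $i(x) \in Q^T$ for every $x$, so $i$ corestricts to an $(L,V)$-functor $i' : X \rightarrow Q^T$ with $\iota i' = i$; since both $i$ and $\iota$ are fully faithful and $Q^T$ carries the structure induced from $Q$, the equalities $Q^T(Li'(\underline{x}), i'(y)) = Q(Li(\underline{x}), i(y)) = \tilde{a}(\underline{x}, y)$ show that $i'$ is fully faithful as well. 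It then remains only to verify the two defining conditions of density for $i'$, one for the $V$-limit and one for the $(L,V)$-colimit.

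For the limit condition I would first observe that the two weights coincide: since $\iota$ is fully faithful and $a \cdot a = a$ in the $V$-category $Q$, both $(i')^{*} \cdot q'_{*}$ (formed in $Q^T$) and $i^{*} \cdot q'_{*}$ (formed in $Q$) are equal to the distributor $x \mapsto Q(q', i(x))$; restricting the supremum to $Q^T$ changes nothing, because $q'$ and every $i(x)$ already lie in $Q^T$, so taking $w' = i(x)$ in $\bigvee_{w' \in Q^T} Q(q',w') \otimes Q(w', i(x))$ already attains $Q(q', i(x))$. By the very definition of $T$ one has $T(q') = \limit(i^{*} \cdot q'_{*}, i)$ computed in $Q$, and $T(q') = q'$ as $q' \in Q^T$. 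Since $\iota$ is a right adjoint in $\Vcat$ it is continuous, whence $\iota\,\limit((i')^{*} \cdot q'_{*}, i') \simeq \limit(i^{*} \cdot q'_{*}, i) = q' = \iota(q')$; as $\iota$ is fully faithful it reflects this isomorphism, giving $\limit((i')^{*} \cdot q'_{*}, i') \simeq q'$ in $Q^T$.

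For the colimit condition I would run the dual argument through the reflector $\pi_T$, which is cocontinuous because it is a morphism of $V$-quantales and hence preserves the conical joins and copowers into which an $(L,V)$-colimit decomposes by Corollary \ref{Deco}. Pre-density of $i$ gives, by Corollary \ref{Deco}, $q' \simeq \Tcolim{L}(i_{\circledast} \circ (q')^{\circledast}, i)(1) \simeq \bigvee_{\underline{x} \in LX} Q(\alpha Li(\underline{x}), q') \odot \alpha Li(\underline{x})$ in $Q$. Applying $\pi_T$ and using $\pi_T(q') = q'$ together with the adjunction identity $Q^T(\pi_T z, q') = Q(z, q')$ — which both identifies the weights via $Q^T(\alpha^{Q^T} Li'(\underline{x}), q') = Q(\alpha Li(\underline{x}), q')$ and sends the diagram $\alpha Li(\underline{x})$ to $\alpha^{Q^T} Li'(\underline{x})$ — yields $q' \simeq \bigvee_{\underline{x} \in LX} Q^T(\alpha^{Q^T} Li'(\underline{x}), q') \odot \alpha^{Q^T} Li'(\underline{x})$ in $Q^T$, which by Corollary \ref{Deco} is exactly $\Tcolim{L}(i'_{\circledast} \circ (q')^{\circledast}, i')(1)$.

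The main obstacle, and the step I would treat most carefully, is the interplay between the two ambient structures: one must check that the weights built from the $Q^T$-structure genuinely agree with those built from the $Q$-structure, and that weighted $(L,V)$-colimits in the reflective subcategory $Q^T$ are obtained by applying $\pi_T$ to the corresponding colimits in $Q$. Both facts rest on the adjunction $\pi_T \dashv \iota$ together with full faithfulness of $\iota$; the only genuinely new point beyond the purely categorical reflection argument is that $\pi_T$, being a morphism of $V$-quantales, respects the monoidal decomposition of $\Tcolim{L}$ furnished by Corollary \ref{Deco}, so that it commutes with the enriched colimit on the nose rather than merely laxly.
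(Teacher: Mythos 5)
Your proof is correct and follows essentially the same route as the paper: the limit half of density is exactly the fixed-point equation $T(q') = q'$ read as $q' \simeq \limit(i^{*}\cdot q'_{*}, i)$ and transported along the fully faithful inclusion $\iota : Q^T \rightarrow Q$, while the colimit half is obtained by pushing the pre-density colimit through the reflector $\pi_T$, which preserves $(L,V)$-colimits. The only cosmetic difference is that the paper justifies this preservation by exhibiting $\pi_T$ as a left adjoint $(L,V)$-functor, via the computation $Q^T(T(q_1) \mult' \cdots \mult' T(q_n), w) = Q(q_1 \mult \cdots \mult q_n, \iota(w))$, whereas you verify it termwise on the decomposition of Corollary \ref{Deco} using strong monoidality of $\pi_T$ and the adjunction $\pi_T \dashv \iota$ --- the same fact in different clothing.
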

\begin{proof}
Since $T$ is a left adjoint $V$-functor and a morphism of $V$-quantales, it follows that $T$ is also a left adjoint $(L,V)$-functor, since
 $$Q^T(T(q_1) \mult' ... \mult' T(q_n), w) = Q^T(T(q_1 \mult ... \mult q_n), w ) = Q(q_1 \mult ... \mult q_n, g(w)).$$
 This implies that $T$ preserves $(L,V)$-colimits as well, hence that $i'$ is pre-dense.\\
 Every element of $Q^T$ satisfies $T(q) \simeq q,$ hence it follows that $i'$ is dense.\\
\end{proof}
\begin{prop} \label{Strict}
	Every $V$-quantum B-algebra $X$ admits a strict morphism into a $V$-quantale.
\end{prop}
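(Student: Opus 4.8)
The plan is to produce the strict morphism as the Yoneda embedding into a Day-convolution quantale. Since a $V$-quantum B-algebra $X=(X,a,P,J)$ is in particular a representable promonoidal $V$-category, Remark \ref{Day} equips the presheaf category $\mathbb{D}(X)$ with the convolution product $\mult_D$; as $\mult_D$ is given by a formula built only from suprema, it is cocontinuous in each variable, so $(\mathbb{D}(X),\mult_D,J^{\circ})$ is a separated cocomplete monoidal $V$-category whose multiplication preserves colimits in each variable, i.e.\ a $V$-quantale. I would then take $i=\Yo{X}\colon X\to\mathbb{D}(X)$, $x\mapsto a(-,x)$, which is a $V$-functor and is fully faithful by the Yoneda lemma, and show that it is a \emph{strict} morphism of $V$-quantum B-algebras. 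Since $V$-quantum B-algebras form a full subcategory of $\Multi{V}$, this is exactly a strict morphism into a $V$-quantale.

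The heart of the matter is that $\Yo{X}$ is \emph{strong promonoidal}: writing $R$ for the promonoidal structure that the $V$-quantale $\mathbb{D}(X)$ carries (it is again representable, by the implications $\rimp,\limp$ of $\mathbb{D}(X)$), I would prove that
$$P(x,y,z)\simeq R(\Yo{X}x,\Yo{X}y,\Yo{X}z)\qquad\text{for all }x,y,z\in X.$$
Concretely this amounts to showing that the convolution of two representables is again representable and is computed by $P$, namely $\Yo{X}(x)\mult_D\Yo{X}(y)\simeq P(x,y,-)$; this follows from the convolution formula of Remark \ref{Day} together with the co-Yoneda (density) identity $\phi\cdot a=\phi$, which absorbs each representable factor $a(-,x)$ into the kernel $P^{\circ}$.

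Granting this, strictness is pure Yoneda bookkeeping. For the right implication I would compute, for every $w\in X$,
\begin{align*}
\Yo{X}(x\rimp y)(w) &= a(w,x\rimp y)=P(w,x,y)=R(\Yo{X}w,\Yo{X}x,\Yo{X}y)\\
&=\mathbb{D}(X)\bigl(\Yo{X}w,\ \Yo{X}x\rimp\Yo{X}y\bigr)=(\Yo{X}x\rimp\Yo{X}y)(w),
\end{align*}
using in turn the definition of $\Yo{X}$, the representability $P(-,x,y)\simeq a(-,x\rimp y)$ of $X$, strong promonoidality, the representability of the quantale $\mathbb{D}(X)$, and finally the Yoneda lemma. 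Hence $\Yo{X}(x\rimp y)\simeq\Yo{X}(x)\rimp\Yo{X}(y)$, and the dual computation with $P(x,-,y)\simeq a(-,x\limp y)$ gives $\Yo{X}(x\limp y)\simeq\Yo{X}(x)\limp\Yo{X}(y)$; in particular both the lax morphism inequalities and their strict converses hold. The unit clause $u_{\mathbb{D}(X)}\le\Yo{X}(u_X)$ is handled the same way: the unit part of strong promonoidality together with $J(x)=a(u_X,x)$ identifies the monoidal unit $J^{\circ}$ of $\mathbb{D}(X)$ with $\Yo{X}(u_X)$.

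I expect the only genuine obstacle to be establishing the strong promonoidality of $\Yo{X}$, that is the isomorphism $\Yo{X}(x)\mult_D\Yo{X}(y)\simeq P(x,y,-)$; this is the quantale-enriched avatar of Day's theorem, and is the step where the density/absorption computation must actually be carried out. Everything after it is a one-line application of the Yoneda lemma and of the representability relations already recorded for $X$ and for $\mathbb{D}(X)$. Finally I would note that this $i$ is automatically pre-dense, since every presheaf is the $(L,V)$-colimit of $\Yo{X}$ weighted by itself, so the morphism produced is precisely of the form required by the preceding corollary and by Proposition \ref{Lax}.
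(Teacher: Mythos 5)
Your strategy fails at its first step: for a genuine $V$-quantum B-algebra there is no Day-convolution quantale structure on $\mathbb{D}(X)$ of the kind you invoke. Remark \ref{Day} produces a $V$-quantale only when the promonoidal structure comes from a \emph{monoidal} one; a quantum B-algebra is representable in the two implication slots ($P(x,-,y)\simeq a(-,x\limp y)$, $P(-,x,y)\simeq a(-,x\rimp y)$), not in the multiplicative one, and the variance of $P$ (as a distributor $X\boxtimes X \xslashedrightarrow{} X$ it is contravariant in the first two arguments and covariant in the third) is exactly wrong for a convolution of \emph{presheaves}: a promonoidal structure on $X$ induces Day convolution on the covariant functor category $[X,V]$, not on $\mathbb{D}(X)=[X^{\op},V]$, and promonoidal structures do not dualize. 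Concretely, for $V=\mathbf{2}$ a quantum B-algebra has $P(w,z,x)=a(w,z\rimp x)$, and the formula of Remark \ref{Day} gives $(j\mult_D l)=\{x \mid \exists z\in l,\ j\cap\downc(z\rimp x)\neq\varnothing\}$, which is \emph{up}-closed in $x$, so $j\mult_D l\notin\mathbb{D}(X)$. For the same reason your key identity $\Yo{X}(x)\mult_D\Yo{X}(y)\simeq P(x,y,-)$ is variance-broken: $P(x,y,-)$ is a co-presheaf, not an element of $\mathbb{D}(X)$. The correct avatar of that identity lives in $[X,V]$ with the embedding $x\mapsto a(x,=)$, but that embedding is contravariant, so it cannot serve as the desired strict morphism either. (If the convolution of representables in $\mathbb{D}(X)$ were again representable, $X$ would carry a multiplication, i.e.\ be a residuated monoidal category; your argument is essentially correct in that special case, but quantum B-algebras are precisely the objects that may lack a multiplication.)

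This variance obstruction is the reason the paper works in $\Multi{V}$: its proof takes the Yoneda embedding $\Yo{X}\colon X\to\mathbb{D}_L(X)$ into the $(L,V)$-presheaf category, which is always a $V$-quantale (the free cocomplete $(L,V)$-category), the list monad supplying the ``free multiplication'' that a quantum B-algebra lacks. Strictness is then obtained by comparing $\tilde{a}((\underline{x};w),z)=\tilde{a}(\underline{x},w\rimp z)$, through the multicategory Yoneda lemma, with $\mathbb{D}_L(X)[L\Yo{X}(\underline{x}),\Yo{X}(w)\rimp\Yo{X}(z)]$, which forces $\Yo{X}(w)\rimp\Yo{X}(z)=\Yo{X}(w\rimp z)$ and dually for $\limp$ --- the same Yoneda bookkeeping you propose, but carried out where the quantale actually exists. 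Note also that your closing remark on pre-density quotes the $(L,V)$-colimit property of $\mathbb{D}_L(X)$, not of your $\mathbb{D}(X)$; that statement is the paper's corollary about $\Yo{X}\colon X\to\mathbb{D}_L(X)$.
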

\begin{proof}
	Let $(X, a)$ be the corresponding object in $\Multi{V} $. Consider the Yoneda embedding in $\Multi{V}$
	$$ \Yo{X} : X \rightarrow \mathbb{D}_L(X),  \ \ x \mapsto a(-, x).$$
	We have that
	\begin{alignat*}{2}
	a((\underline{x}, w), z) & = \mathbb{D}_L(X)[L\Yo{X}((\underline{x};w)), \Yo{X}(z)] \\
	& = \mathbb{D}_L(X)[(\alpha L\Yo{X}(\underline{x})) \mult \Yo{X}(w), \Yo{X}(z)] \\
	& = \mathbb{D}_L(X)[\alpha L\Yo{X}(\underline{x})), \Yo{X}(w) \rimp \Yo{X}(z)) ] \\
	& =\mathbb{D}_L(X)[ L\Yo{X}(\underline{x})), \Yo{X}(w) \rimp \Yo{X}(z))].
	\end{alignat*}
	Since, by definition, we have that
	$$ a((\underline{x}; w), z)  =  a(\underline{x}, w \rimp z) = \mathbb{D}_L(X)[L\Yo{X}(\underline{x}), \Yo{X}(w \rimp  z)]; $$
	from which it follows that, for all $\underline{x} \in LX$,
	$$ \mathbb{D}_L(X)[ L\Yo{X}(\underline{x})), \Yo{X}(w) \rimp \Yo{X}(z)) ] =\mathbb{D}_L(X)[L\Yo{X}(\underline{x}), \Yo{X}(w \rimp  z) ]; $$
	hence, by representability,
	$$  \Yo{X}(x_2) \rimp \Yo{X}(x) =  \Yo{X}(x_2 \rimp  x).$$
	By using the same argument, but starting from $ a ( (w; \underline{x}), z),$
	we can also prove that
	$$  \Yo{X}(x_2) \limp \Yo{X}(x) =  \Yo{X}(x_2 \limp  x).$$
\end{proof}
\begin{re}
	Notice that $\mathbb{D}_L(X)$ is an object of $V$-$\mathtt{QBalg}$ and, when viewed as an $(L,V)$-category, its $(L,V)$-structure $\tilde{Q}_{\mathbb{D}_L(X)}$ is the same as $\alpha \cdot Q_{\mathbb{D}_L(X)}$ by adjointness.
\end{re}
\begin{cor}
Let $X$ be a $V$-quantum B-algebra. Then $\Yo{X} : X \rightarrow \mathbb{D}_L(X)$ is a pre-dense strict morphism.
\end{cor}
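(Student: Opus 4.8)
The plan is to separate the claim into its two components---strictness and pre-denseness---and to observe that strictness has essentially already been established while pre-denseness follows from the density of the Yoneda embedding once the weighting distributor is correctly identified.

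For strictness I would appeal directly to Proposition \ref{Strict}. Its proof establishes the equalities $\Yo{X}(x_2 \rimp x) = \Yo{X}(x_2) \rimp \Yo{X}(x)$ and $\Yo{X}(x_2 \limp x) = \Yo{X}(x_2) \limp \Yo{X}(x)$ in $\mathbb{D}_L(X)$; these are exactly the two pairs of inequalities (in both directions) demanded of a \emph{strict} morphism of $V$-quantum B-algebras, and preservation of the unit is immediate. Combined with the fact that $\Yo{X}$ is fully faithful (Remark \ref{MultiYoneda}) and that $\mathbb{D}_L(X)$ is a $V$-quantale, this yields that $\Yo{X}$ is a strict morphism into a $V$-quantale, so only pre-denseness remains.

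For pre-denseness I must show that every $q \in \mathbb{D}_L(X)$ is recovered as $\Tcolim{L}((\Yo{X})_{\circledast} \circ q^{\circledast}, \Yo{X})(1)$. First I would invoke the density of the Yoneda embedding---the fact, recorded in the examples following the definition of $\Tcolim{L}$, that every presheaf is the $(L,V)$-colimit of $\Yo{X}$ weighted by itself---so that $q \simeq \Tcolim{L}(q, \Yo{X})(1)$, reading $q$ as its mate distributor $X \kmodto E$. It then suffices to identify the weight $(\Yo{X})_{\circledast} \circ q^{\circledast}$ with this mate. Unravelling the colimit as in Remark \ref{Discussion2} and Corollary \ref{Deco}, the value of this weight at a list $\underline{x} \in LX$ is $\mathbb{D}_L(X)(\alpha L\Yo{X}(\underline{x}), q)$; by representability of $\mathbb{D}_L(X)$, whose $(L,V)$-structure is $\alpha$ composed with its underlying $V$-structure, this equals $\mathbb{D}_L(X)[L\Yo{X}(\underline{x}), q]$, which the Yoneda Lemma (Remark \ref{MultiYoneda}) identifies with $q(\underline{x})$. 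Hence the weight is literally the mate of $q$, and $\Tcolim{L}((\Yo{X})_{\circledast} \circ q^{\circledast}, \Yo{X})(1) \simeq \Tcolim{L}(q, \Yo{X})(1) \simeq q$, as required.

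The one point I would write out with care---and the only place a genuine obstacle could hide---is this chain of identifications showing that the composite weight evaluates to $q(\underline{x})$, since it requires correctly handling the Kleisli composition $\circ$ and then matching it, through the representability of $\mathbb{D}_L(X)$, against the Yoneda Lemma. Everything else is a direct appeal to results already proved, so beyond this bookkeeping I anticipate no real difficulty.
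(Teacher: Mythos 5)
Your proposal is correct and takes essentially the same route as the paper: there, pre-denseness is likewise obtained from the canonical presentation of every presheaf as the $(L,V)$-colimit of $\Yo{X}$ weighted by itself, with the Yoneda lemma identifying the weight $(\Yo{X})_{\circledast}\circ \phi^{\circledast}$ with $\phi$, while strictness is taken as already established by the immediately preceding Proposition \ref{Strict}. Your write-up merely makes explicit the weight computation (evaluating $(\Yo{X})_{\circledast}\circ q^{\circledast}$ at $\underline{x}$ via representability of $\mathbb{D}_L(X)$ to get $q(\underline{x})$) that the paper compresses into the phrase ``from the Yoneda lemma.''
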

\begin{proof}
Every element of $\mathbb{D}_L(X)$ is a weighted colimit of $\Yo{X}$ in a canonical way. From the Yoneda lemma it follows that
$$ \phi \simeq \Tcolim{L}( \Yo{X}, \phi)(1) \simeq \Tcolim{L}( \Yo{X}, (\Yo{X})_{\circledast} \circ \phi^{\circledast})(1) .$$
\end{proof}
\begin{cor}
	For every $V$-quantum B-algebra $X$, there exists a $V$-quantale $Q$ and a dense morphism $i : X \rightarrow Q$.
\end{cor}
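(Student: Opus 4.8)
The plan is to assemble the results established immediately above, applied to the Yoneda embedding; the statement is essentially a corollary that packages Proposition \ref{Lax} and its consequences together. Given a $V$-quantum B-algebra $X$, the preceding corollary already tells us that the Yoneda embedding
$$\Yo{X} : X \to \mathbb{D}_L(X)$$
is a pre-dense strict morphism in $V$-$\mathtt{QBalg}$, and that its target $\mathbb{D}_L(X)$ is a $V$-quantale. This is precisely the input required by the machinery developed in this subsection.

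First I would invoke Proposition \ref{Lax} with $i = \Yo{X}$ and $Q = \mathbb{D}_L(X)$: since $\Yo{X}$ is pre-dense and strict, the associated operator $T : \mathbb{D}_L(X) \to \mathbb{D}_L(X)$, $q \mapsto \limit(i^{*} \cdot q_{*}, i)$, is a lax monoidal monad. Next, by the theorem on lax monoidal monads from the section on enriched quantales, the set of fixed points $\mathbb{D}_L(X)^T$ is again a $V$-quantale; this is the sought-after target $Q$. Finally I would apply the corollary following Proposition \ref{Lax}, again with $i = \Yo{X}$, to conclude that the factored morphism $i' : X \to \mathbb{D}_L(X)^T$ is dense. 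Setting $Q = \mathbb{D}_L(X)^T$ and $i = i'$ then yields the desired dense morphism into a $V$-quantale.

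The proof is essentially a bookkeeping assembly of the preceding results, and there is no genuinely new obstacle here. The technically demanding step---verifying that $T$ is lax monoidal, which is exactly what forces us to pass through $V$-quantum B-algebras rather than arbitrary $(L,V)$-categories---has already been carried out in Proposition \ref{Lax}, relying on the representability (via $\rimp$ and $\limp$) of a $V$-quantum B-algebra. The only point deserving care is confirming that the hypotheses of Proposition \ref{Lax} and of its following corollary are literally met, and this is precisely guaranteed by the pre-denseness and strictness of $\Yo{X}$ recorded just above; in particular it is worth noting that the target $Q$ produced is canonical, so the construction in fact exhibits a specific dense hull $\mathbb{D}_L(X)^T$ rather than merely asserting existence.
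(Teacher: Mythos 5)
Your proposal is correct and follows essentially the same route as the paper, whose own proof is just the one-line instruction ``Apply Proposition \ref{Lax} to the Yoneda embedding $\Yo{X} : X \rightarrow \mathbb{D}_L(X)$''; you have merely unpacked the chain that this shorthand relies on (pre-denseness and strictness of $\Yo{X}$, the lax monoidal monad $T$, the fixed-point theorem making $\mathbb{D}_L(X)^T$ a $V$-quantale, and the corollary giving denseness of $i'$). Your version is, if anything, a more explicit and careful account of the intended argument.
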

\begin{proof}
	Apply Proposition \ref{Lax} to the Yoneda embedding
	$$ \Yo{X} : X \rightarrow \mathbb{D}_L(X).$$
\end{proof}
\begin{re}
	As we already remarked before, our class $J$ is the class of fully faithful functors in $\Multi{V}$.
\end{re}
\begin{prop}
	\label{Dense}
	Every dense morphism in $\Multi{V}$ is essential.
\end{prop}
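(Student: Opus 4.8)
The plan is to transcribe the forward implication of Theorem~\ref{Ess} to the $(L,V)$-setting, using the $(L,V)$-colimit half of density on the domain and the $V$-limit half on the codomain. So let $i : (X,a)\to Q$ be dense with $Q$ a $V$-quantale, and let $f : Q\to Z$ be an $(L,V)$-functor such that $fi$ is fully faithful; the goal is to prove that $f$ is fully faithful. Since $f$ is already an $(L,V)$-functor we have $Q(\underline q,w)\le Z(Lf(\underline q),f(w))$ for all $\underline q\in LQ$, $w\in Q$, so it suffices to establish the reverse inequality
\[
Z(Lf(\underline q),f(w))\ \le\ Q(\underline q,w),
\]
where $Q(\underline q,w)=Q(q_1\mult\cdots\mult q_n,w)$ by representability of $Q$.

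First I would expand the codomain: writing $w\simeq\limit(i^{*}\cdot w_{*},i)$ (limit-density) and decomposing it as in Remark~\ref{Discussion2}, continuity of the representable $Q(\underline q,-):Q_0\to V$ (Examples~\ref{ColLims}) together with the power adjunction $Q(y,x\pitchfork u)=[u,Q(y,x)]$ give
\[
Q(\underline q,w)=\bigwedge_{x\in X}\big[\,Q(w,i(x)),\,Q(\underline q,i(x))\,\big].
\]
Then I would expand the domain: each $q_j=\Tcolim{L}(i_{\circledast}\circ q_j^{\circledast},i)(1)$ by colimit-density, and since the quantale multiplication is cocontinuous in each variable, the product $q_1\mult\cdots\mult q_n$ is a conical colimit of copowers of the elements $\alpha Li(\underline y^{(j)})$. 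As $Q(-,i(x))$ sends colimits to limits (Example~\ref{Inv}, Corollary~\ref{Deco}), this rewrites $Q(\underline q,i(x))$ as an infimum over lists of lists $\underline{\underline y}=(\underline y^{(1)},\dots,\underline y^{(n)})$, with inner entries $Q(\alpha Li(\underline Y),i(x))$, where $\underline Y=(\underline y^{(1)};\dots;\underline y^{(n)})$ is the concatenation. Full faithfulness of $i$ as an $(L,V)$-functor, and then of $fi$, turn this entry into
\[
Q(\alpha Li(\underline Y),i(x))=X(\underline Y,x)=Z\big(L(fi)(\underline Y),fi(x)\big).
\]

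After these substitutions the desired inequality reduces, entry by entry, to
\[
Z(Lf(\underline q),f(w))\otimes\Big(\bigotimes_{j}Q(\alpha Li(\underline y^{(j)}),q_j)\Big)\otimes Q(w,i(x))\ \le\ Z\big(L(fi)(\underline Y),fi(x)\big)
\]
for each $x$ and each $\underline{\underline y}$. Here I would use that $f$ is an $(L,V)$-functor to bound $Q(\alpha Li(\underline y^{(j)}),q_j)\le Z(L(fi)(\underline y^{(j)}),f(q_j))$ and $Q(w,i(x))\le Z((f(w)),fi(x))$, and then compose inside $Z$ along the path $L(fi)(\underline Y)\to Lf(\underline q)\to f(w)\to fi(x)$ using transitivity $b\circ b\le b$ of the $(L,V)$-structure of $Z$; the concatenation $\underline Y$ is exactly what makes the multiplication of the list monad identify $m(\underline{\underline v})$, for $\underline{\underline v}=(L(fi)(\underline y^{(1)}),\dots,L(fi)(\underline y^{(n)}))$, with $L(fi)(\underline Y)$, so that the two composition steps close up. Note that this route never forms colimits in $Z$ (which need not be cocomplete): all colimits live in the $V$-quantale $Q$, and only transitivity in $Z$ is used to transfer them. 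This final composition, and the bookkeeping that keeps the list (multicategorical) structure on the domain aligned with the single-variable $V$-limit on the codomain, is the main obstacle; once it is in place, the remainder is a routine transcription of the $V$-enriched proof of Theorem~\ref{Ess}.
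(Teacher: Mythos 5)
Your proof is correct, and its first half coincides with the paper's: the paper likewise expands both arguments by density, writing $y_1,y_2$ as $(L,V)$-colimits and $z$ as a $V$-limit, uses cocontinuity of $\mult$ in the $V$-quantale $Q$, and converts $Q(\alpha Li(\underline{x})\mult\alpha Li(\underline{w}),i(x))$ into $a((\underline{x};\underline{w}),x)$ and then into a hom in the codomain via full faithfulness of $i$ and of the restriction (the paper takes length two ``for simplicity''; your version keeps general arity and, implicitly, the empty list). Where you genuinely diverge is the reassembly step. The paper, faced with the nested limit of $b(Lg(\underline{x};\underline{w}),g(x))$, embeds the codomain $(Z,b)$ into the $V$-quantale $\mathbb{D}_L(Z)$ via $\Yo{Z}$, rewrites the hom-values there by the Yoneda lemma, uses cocompleteness of $\mathbb{D}_L(Z)$ (Day convolution preserving colimits) to recombine everything into $\mathbb{D}_L(Z)[L(\Yo{Z}g)(\underline{y}),\Yo{Z}g(z)]$, and compares via the two lax inequalities $\Yo{Z}g(\limit(j_3,i(x)))\leq\limit(j_3,f(x))$ and $\colim(e_1^{\circ}\cdot\hat{j},\alpha Lf)\leq\Yo{Z}g(\colim(e_1^{\circ}\cdot\hat{j},\alpha Li))$. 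You never leave $Q$ and $Z$: you unfold $Q(\underline{q},w)$ to a pointwise infimum indexed by $x\in X$ and lists of lists $\underline{\underline{y}}$ (using $[u,[v,t]]=[u\otimes v,t]$), and close each entry using only the $(L,V)$-functoriality of $f$ and transitivity $b\circ b\leq b$ in $Z$, with the concatenation $\underline{Y}$ and the monad multiplication doing the alignment that Day-style colimit calculus does in the paper. Your route buys elementarity and transparency about where cocompleteness is needed --- all colimits demonstrably live in $Q$, none in $Z$, so no auxiliary cocomplete environment is invoked --- at the cost of heavier indexing; the paper's route buys a shorter formal computation, and the passage through $\mathbb{D}_L(Z)$ is exactly what licenses manipulating colimit expressions that need not exist in $Z$, making the structural parallel with Theorem \ref{Ess} explicit. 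Both arguments rest on the same pillars (density of $i$, cocontinuity of $\mult$, full faithfulness of $i$ and of the restriction), so the proofs are interchangeable, and I see no gap in yours.
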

\begin{proof}
	Suppose that $i:(X,a) \rightarrow Q$ is dense and let $g : Q \rightarrow (Z,b)$ be such that $\restr{g}{X}$ is fully faithful.\\
	We want to prove that
	$$ b(Lg(\underline{y}),g(z)) \leq Q(y_1 \mult y_2, z), \mbox{ (where, for simplicity } \underline{y} = (y_1, y_2)).$$
	Since $i$ is dense, we can write
	$$y_1  = \Tcolim{L}(i_{\circledast} \circ y_1^{\circledast} , i )(1), \ \ y_2 =  \Tcolim{L}(i_{\circledast} \circ y_2^{\circledast} , i )(1), \ \ z = \limit(i^{*} \cdot z_*, i) . $$
	In order to increase readability, we use the following notation:
	$$j_1 = i_{\circledast} \circ y_1^{\circledast}, \ \ j_2 = i_{\circledast} \circ y_2^{\circledast}, \ \ j_3 = i^{*} \cdot z_*.$$
	We have that
	\begin{alignat*}{1}
	Q(y_1 \mult y_2, a) &= Q(\colim(e^{\circ}_1 \cdot \hat{j_1},\alpha Li(\underline{x})) \mult \colim(e^{\circ}_1 \cdot \hat{j_1},\alpha Li(\underline{w})), \limit(j_3,i(x) ))\\
	&\mbox{(since $\mult$ preserves colimits, being Q a $V$-quantale)} \\
	& = \limit( e^{\circ}_1 \cdot \hat{j_1},  \limit(e^{\circ}_1 \cdot \hat{j_2}, \limit(j_3,
	Q(\alpha Li(\underline{x}) \mult \alpha Li(\underline{w}) , i(x))))) \\
	& \mbox{(being $i$ fully faithful)} \\
	& = \limit( e^{\circ}_1 \cdot \hat{j_1},  \limit(e^{\circ}_1 \cdot \hat{j_2}, \limit(j_3,
	a((\underline{x} ;\underline{w}) , x)))) \\
	& \mbox{(since $\restr{g}{X}$ is fully faithful)} \\
	& = \limit( e^{\circ}_1 \cdot \hat{j_1},  \limit(e^{\circ}_1 \cdot \hat{j_2}, \limit(j_3,b(Lg(\underline{x} ;\underline{w}) , g(x))
	))).
	\end{alignat*}
	By using the Yoneda embedding $\Yo{Z} : (Z,b) \rightarrow \mathbb{D}_L(Z)$, we have that $\Yo{Z}  g$ is fully faithful (restricted to $X$, of course). Moreover, we have (define $f = \Yo{Z} g i $)
	$$ \Yo{Z} g(\limit(j_3,i(x) )) \leq \limit(j_3, f(x)), \ \ (1)$$
	and
	$$\colim(e^{\circ}_1 \cdot \hat{j_1},\alpha Lf(\underline{x})) \leq \Yo{Z}  g(\colim(e^{\circ}_1 \cdot \hat{j_1},\alpha Li(\underline{x}))), \ \ \colim(e^{\circ}_1 \cdot \hat{j_2},\alpha Lf(\underline{w})) \leq \Yo{Z}  g(\colim(e^{\circ}_1 \cdot \hat{j_2},\alpha Li(\underline{w})))$$
	which, when combined together, they give
	$$\colim(e^{\circ}_1 \cdot \hat{j_1},\alpha Lf(\underline{x})) \mult \colim(e^{\circ}_1 \cdot \hat{j_2},\alpha Lf(\underline{w}))\leq \Yo{Z}  g(\colim(e^{\circ}_1 \cdot \hat{j_1},\alpha Li(\underline{x}))) \mult \Yo{Z}  g(\colim(e^{\circ}_1 \cdot \hat{j_2},\alpha Li(\underline{w}))). \ \ (2)$$
$$
	\begin{aligned}&\texttt {colim}(e^{\circ }_1 \cdot \hat{j_1},\alpha Lf({\underline{x}})) *\texttt {colim}(e^{\circ }_1 \cdot \hat{j_2},\alpha Lf({\underline{w}})) \\&\quad \le {\mathbf {y}}_{Z} g(\texttt {colim}(e^{\circ }_1  \cdot \hat{j_1},\alpha Li({\underline{x}}))) *{\mathbf {y}}_{Z} g(\texttt {colim}(e^{\circ }_1 \cdot \hat{j_2},\alpha Li({\underline{w}}))). (2) \end{aligned}
		$$
	Thus
	\begin{alignat*}{2}
	Q(y_1 \mult y_2, z)  &= \limit( e^{\circ}_1 \cdot \hat{j_1},  \limit(e^{\circ}_1 \cdot \hat{j_2}, \limit(j_3,b(Lg(\underline{x} ;\underline{w}) , g(x_k))
	))) \\
	& \mbox{(by the Yoneda lemma)} \\
	& = \limit( e^{\circ}_1 \cdot \hat{j_1},  \limit(e^{\circ}_1 \cdot \hat{j_2}, \limit(j_3,
	\mathbb{D}_L(Z)[ (\alpha Lf(\underline{x};\underline{w}), f(x)])))  \\
	& = \limit( e^{\circ}_1 \cdot \hat{j_1},  \limit(e^{\circ}_1 \cdot \hat{j_2}, \limit(j_3,
	\mathbb{D}_L(Z)[ (\alpha Lf(\underline{x})) \mult (\alpha Lf(\underline{w})), f(x)])))  \\
	&\mbox{(since $\mult$ preserves colimits, being $\mathbb{D}_L(Z)$ a $V$-quantale)} \\
	& = \mathbb{D}_L(Z) [ \colim(e^{\circ}_1 \cdot \hat{j_1},\alpha Lf(\underline{x})) \mult \colim(e^{\circ}_1 \cdot \hat{j_2},\alpha Lf(\underline{w})),\limit(j_3, f(x))] \\
	& \mbox{(using (1) + (2))} \\
	& \geq \mathbb{D}_L(Z)[ \Yo{Z} g(y_1) \mult \Yo{Z} g(y_2), \Yo{Z} g(z)] \\
	& = \mathbb{D}_L(Z)[L(\Yo{Z} g)(\underline{y}), \Yo{Z} g(z)] \\
	& \mbox{(by the Yoneda lemma)} \\
	&= b(Lg(\underline{y}),g(z)).
	\end{alignat*}
	This proves that
	$$ b(Lg(\underline{y}),g(z)) \leq Q(\underline{y}, z).$$
\end{proof}
\begin{re}
	Since the notion of denseness in $V$-$\mathtt{QBalg}$ and $\Multi{V}$ coincide, the same result holds for dense morphisms in $V$-$\mathtt{QBalg}$.
\end{re}
\begin{teorema}
A morphism $ i :(X,a) \rightarrow Q$ in $V$-$\mathtt{QBalg}$ is dense iff it is essential.
\end{teorema}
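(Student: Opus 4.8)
The plan is to prove the two implications separately, leaning on the injective hull produced in the corollaries above. For the forward implication there is essentially nothing to do: a morphism that is dense in $V$-$\mathtt{QBalg}$ is dense in $\Multi{V}$, and by Proposition \ref{Dense} every dense morphism in $\Multi{V}$ is essential; so \emph{dense} $\Rightarrow$ \emph{essential} is immediate.

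For the converse I would first fix a dense morphism $e : X \rightarrow E$ into some $V$-quantale $E$, whose existence is guaranteed by the corollary asserting that every $V$-quantum B-algebra admits a dense morphism into a $V$-quantale. Since $E$ is a $V$-quantale it is injective, and since $e$ is dense it is fully faithful and, by Proposition \ref{Dense}, essential; thus $(E,e)$ is an injective hull of $X$. Now let $i : X \rightarrow Q$ be an essential (in particular fully faithful, exactly as in Theorem \ref{Ess}) morphism. Using injectivity of $E$ I would extend $e$ along $i$, obtaining $h : Q \rightarrow E$ with $hi = e$. Because $i$ is essential and $hi = e$ is fully faithful (being dense), essentiality of $i$ forces $h$ to be fully faithful as well. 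It then remains to prove a transfer lemma in the spirit of the lemma preceding Theorem \ref{Ess}: \emph{if $h : Q \rightarrow E$ is fully faithful and $hi$ is dense, then $i$ is dense.}

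Establishing this transfer lemma is the technical heart of the argument, and the $(L,V)$-colimit half is where I expect the main difficulty, precisely because $h$ is a priori only an $(L,V)$-functor — neither cocontinuous nor strict monoidal — so one cannot simply push $\Tcolim{L}$ through $h$. The $V$-enriched limit half is handled as in Proposition \ref{Lax}: writing $T_i(q) = \limit(i^{*} \cdot q_{*}, i)$ one always has $q \leq T_i(q)$ together with the identity $Q(q,i(x)) = Q(T_i(q),i(x))$ coming from the universal property of the limit. Density of $hi$ and full faithfulness of $h$ give $h(q) \simeq \bigwedge_{x} Q(q,i(x)) \pitchfork hi(x)$ in $E$, and the power adjunction together with $h$ fully faithful reduces $h(T_i(q)) \leq h(q)$ to that identity; hence $q \simeq \limit(i^{*} \cdot q_{*}, i)$.

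For the colimit half I would argue as follows. Full faithfulness of $h$ as an $(L,V)$-functor matches the coefficients, $E(\alpha Le(\underline{x}),h(q)) = Q(\alpha Li(\underline{x}),q)$; every $(L,V)$-functor between representable $(L,V)$-categories is lax monoidal (as in the proof of Proposition \ref{Fully-Faithful}), so $\alpha Le(\underline{x}) \leq h(\alpha Li(\underline{x}))$; and the copower inequality $u \odot h(z) \leq h(u \odot z)$ holds for any $V$-functor. Combining these with the decomposition of $\Tcolim{L}$ from Corollary \ref{Deco} applied to $hi = e$ gives
\[
h(q) \;\simeq\; \bigvee_{\underline{x} \in LX} Q(\alpha Li(\underline{x}),q)\odot \alpha Le(\underline{x}) \;\leq\; h\big(\Tcolim{L}(i_{\circledast}\circ q^{\circledast},i)(1)\big),
\]
while the reverse inequality holds by the universal property of the colimit; full faithfulness of $h$ then yields $q \simeq \Tcolim{L}(i_{\circledast}\circ q^{\circledast},i)(1)$. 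Having both formulas, $i$ is dense, which completes the converse and hence the theorem. The one place to be careful is exactly this colimit half: all three ingredients (coefficient matching, lax monoidality, and the copower inequality) are needed to compensate for the fact that $h$ is only fully faithful.
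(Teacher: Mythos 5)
Your proposal is correct, but it takes a genuinely different route from the paper's own proof of the converse. The paper does not extend into the ready-made hull: given essential $i : X \rightarrow Q$, it factorizes the left Kan extension $\mathtt{Lan}_{\Yo{X}}(i) : \mathbb{D}_L(X) \rightarrow Q$ through its image (the $V$-quantale generated by $X$ inside $Q$), notes that the resulting $j$ is strict pre-dense, applies the nucleus $T$ of Proposition \ref{Lax} to $\mathtt{Im}(\mathtt{Lan}_{\Yo{X}}(i))$, and then extends $j' = \pi_T j$ along $i$ by injectivity to get $f : Q \rightarrow (\mathtt{Im}(\mathtt{Lan}_{\Yo{X}}(i)))^T$; essentiality of $i$ makes $f$ fully faithful, fullness and essential surjectivity of $\pi_T$ make $f$ an equivalence, so $Q \simeq (\mathtt{Im}(\mathtt{Lan}_{\Yo{X}}(i)))^T$ and density is transported along this identification. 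You instead transplant the proof of Theorem \ref{Ess}: extend the canonical dense $e : X \rightarrow \mathbb{D}_L(X)^T$ along $i$, use essentiality to make the comparison $h$ fully faithful, and then prove the $(L,V)$-analogue of the lemma preceding Theorem \ref{Ess} (density descends along fully faithful maps) --- a lemma the paper states only in the $V$-case and leaves unproved. Your proof of that transfer lemma is sound and is indeed the technical heart: the limit half works because any $V$-functor laxly preserves powers and infima, together with $Q(q,i(x)) = Q(T_i(q),i(x))$; the colimit half correctly combines $(L,V)$-full-faithfulness of $h$ (coefficient matching), lax monoidality of any $(L,V)$-functor between representables (the computation in Proposition \ref{Fully-Faithful}), and $u \odot h(z) \leq h(u \odot z)$, against the decomposition of Corollary \ref{Deco}; you rightly observe that $h$ is a priori only an $(L,V)$-functor, which is exactly why all three ingredients are needed. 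As for what each approach buys: yours is uniform with the intermezzo section and sidesteps the paper's somewhat delicate claim that $f$ inherits fullness and essential surjectivity from $\pi_T$, while the paper's argument yields extra structural information, realizing $Q$ itself, up to equivalence, as the completion of the quantum B-algebra generated by $X$ inside it. One caveat you share with the paper: essentiality as defined does not formally entail that $i$ is fully faithful, yet both arguments need $i$ in the class $J$ to invoke injectivity; your parenthetical reading (full faithfulness assumed, as in Theorem \ref{Ess}) matches the paper's implicit one.
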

\begin{proof}
	The first part follows from the previous proposition. To show the converse implication, suppose that $ i :(X,a) \rightarrow Q$ is essential. Consider the left Kan extension of $i$ along the Yoneda embedding and take its image factorization, the result is the $V$-quantale generated by $X$ in $Q$:
	\begin{center}

	\begin{tikzcd}
	 X \arrow[r,  "\Yo{X}"] \arrow[dr, "j", swap] & \mathbb{D}_L(X) \arrow[r, "\mathtt{Lan}_{y}(i)"] \arrow[d, two heads] & Q \\
	   									& \mathtt{Im}(\mathtt{Lan}_{y}(i)). \arrow[ur]
	\end{tikzcd}
		\end{center}
	Because $\Yo{X}$ is a strict pre-dense morphism, it follows that $j$ is a strict pre-dense morphism too. Let $T$ be the monad induced by $j$ as in Proposition \ref{Lax}. Consider the following diagram
	\begin{center}
		\begin{tikzcd}
	X \arrow[r,  "j"] \arrow[dr,"j'", swap] & \mathtt{Im}(\mathtt{Lan}_{y}(i)) \arrow[r] \arrow[d,"\pi_T" two heads] & Q \arrow[dl, "f"]\\
	& (\mathtt{Im}(\mathtt{Lan}_{y}(i)))^T
	\end{tikzcd}
		\end{center}
	where $f$ is obtained by applying the injective property to $j'$. In particular, since $i$ essential, it follows that $f$ is fully faithful. Since $\pi_T$ is full and essentially surjective, it follows that $f$ is full and essentially surjective too which implies that $f$ is an equivalence. Hence $Q \simeq (\mathtt{Im}(\mathtt{Lan}_{y}(i)))^T$ which implies that $i$ is dense.\\
	\end{proof}
\section{Injective Hulls of (L,V)-Categories}
In this section we apply the results we proved in the previous section to $\Multi{V}$. The idea is to use the Yoneda embedding to view any $(L,V)$-category as a subset of the $V$-quantale $\mathbb{D}_L(X)$ and then consider the $V$-quantum B-algebra it generated.\par\medskip

Due to the equational definition enriched quantum B-algebras admit, if we have a subset $X \subseteq Q$ of a $V$-quantale $Q$ that contains the unit $u_Q$, the $V$-quantum B-algebra generated by $X$ (inside $Q$) is obtained by inductively adding all the "missing" implications until the process reaches a "saturation". If the unit is not included in $X$, we have to add it in the first step of the construction; this might cause a problem, since we are "artificially" adding an element which is "alien" and can not be constructed inductively starting from elements of $X$.\par\medskip

In order to overcome this problem, we introduce a slight variation of $V$-quantum B-algebras: \textit{$V$-prequantum B-algebras}. A $V$-prequantum B-algebra is a promonoidal category $(X,P,J)$ where only $P$ is representable. In this way we obtain a new subcategory of $\Multi{V}$ where all the constructions made in the last section still work, since all it requires for them to work is the presence of an "implicational structure". Moreover, in the process of building the $V$-quantum B-algebra generated by a subset, we won't have to "artificially" add the unit of the $V$-quantale as we should have if we had considered \textit{vanilla} $V$-quantum B-algebras.
\begin{defin}
	A \textit{$V$-prequantum B-algebra} $(X,a, J)$ is a \textit{pre-representable} promonoidal $V$-category. This means it is a promonoidal $V$-category $(X,a,P,J)$ equipped with two binary operations $\rimp, \limp : X \times X \rightarrow X$ such that:
	\begin{itemize}
		\item $P(x,-,y) \simeq a(-, x\limp y);$
		\item $P(-,x,y) \simeq a(-, x\rimp y).$
	\end{itemize}
\end{defin}
\begin{defin}
	A morphism between $V$-prequantum B-algebras $f: (X, a, J)  \rightarrow (Y,b,U)$ is a $V$-functor that satisfies
	$$ f(x \rimp y) \leq f(x) \rimp f(y),  \mbox{  }  f(x \limp y) \leq f(x) \limp f(y), \mbox{ }  J(x) \leq U(f(x)).$$
	If also
	$$ f(x) \rimp f(y) \leq f(x \rimp y),  \mbox{  }  f(x) \limp f(y) \leq f(x \limp y)),$$
	then $f$ is called strict.
\end{defin}
As for \textit{vanilla} $V$-quantum B-algebras, we have:
\begin{prop}
	The category $V$-$\mathtt{PQBalg}$ of $V$-prequantum B-algebras is a full subcategory of $\Multi{V}$.
\end{prop}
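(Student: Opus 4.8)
The plan is to reduce the statement to Proposition \ref{Fully}, exactly as in the case of \emph{vanilla} $V$-quantum B-algebras. By definition a $V$-prequantum B-algebra is a pre-representable promonoidal $V$-category, so in particular it is a promonoidal $V$-category $(X,a,P,J)$; forgetting the operations $\rimp,\limp$ that witness the representability of $P$ exhibits it as an object of $\Pro$, to which Proposition \ref{Fully} associates the $(L,V)$-category $(X,\tilde a)$ with $\tilde a=\amalg_{n\geq 0}a_n$. Thus on objects there is nothing new to check, and it remains only to match the morphisms.

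First I would record the translation supplied by pre-representability,
$$P(x,y,z)=a(x,y\rimp z)=a(y,x\limp z),$$
so that the defining inequalities of a promonoidal $V$-functor can be rephrased in terms of $a$ and the implications on the codomain. The condition on $J$ is literally the same in both definitions. For the implicational part I would argue both directions. If $f$ is a $V$-functor with $f(y\rimp z)\leq f(y)\rimp f(z)$, then monotonicity of $b$ in its second variable yields
$$a(x,y\rimp z)\leq b(f(x),f(y\rimp z))\leq b(f(x),f(y)\rimp f(z)),$$
which under the translation above is precisely $P(x,y,z)\leq R(f(x),f(y),f(z))$; the symmetric computation handles $\limp$. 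Conversely, from the promonoidal inequality I would specialise $x=y\rimp z$ and use $k\leq a(y\rimp z,y\rimp z)$ to extract $f(y\rimp z)\leq f(y)\rimp f(z)$ pointwise, and likewise for $\limp$.

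Having identified the morphisms of $V$-$\mathtt{PQBalg}$ with the promonoidal $V$-functors between the underlying promonoidal categories, $V$-$\mathtt{PQBalg}$ is a full subcategory of $\Pro$; since $\Pro$ is in turn a full subcategory of $\Multi{V}$ by Proposition \ref{Fully}, the composite inclusion is full, which is the assertion. The only genuinely delicate point is this two-way passage between the implicational inequalities and the promonoidal-functor inequalities; once it is in place, everything else is a direct appeal to the already-established embedding of $\Pro$ into $\Multi{V}$ and requires no new computation with the inductively defined $\tilde a$.
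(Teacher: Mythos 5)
Your proposal is correct and takes essentially the same route as the paper, which states this proposition without a separate proof, treating it---exactly as for \emph{vanilla} $V$-quantum B-algebras---as an immediate consequence of Proposition \ref{Fully} once morphisms of $V$-prequantum B-algebras are identified with promonoidal $V$-functors between the underlying promonoidal categories. Your two-way translation between the implicational inequalities $f(y \rimp z) \leq f(y) \rimp f(z)$, $f(x \limp z) \leq f(x) \limp f(z)$ and the promonoidal inequalities $P(x,y,z) \leq R(f(x),f(y),f(z))$ (via $P(x,y,z) = a(x, y \rimp z) = a(y, x \limp z)$ and specialising $x = y \rimp z$, respectively $y = x \limp z$) is precisely the routine verification the paper leaves implicit.
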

\begin{re}
	Notice that, since the strictness condition involves only implications, the key results contained in Proposition \ref{Lax} and Proposition \ref{Strict} remain true when applied to $V$-prequantum B-algebras. Thus we can construct the injective hull of a $V$-prequantum B-algebra $(X,a, J)$ as done in the previous section.
\end{re}
The last step is to describe the $V$-prequantum B-algebra generated by a subset of a $V$-quantale $Q$.
	\begin{defin}
	Let $X \subseteq Q$ be a subset of a $V$-quantale $(Q,\mult, u_Q)$. The $V$-prequantum B-algebra generated by $X$, denoted by $X$, is the smallest $V$-prequantum B-algebra that contains $X$.
\end{defin}
\begin{prop} \label{Gen}
	Let $X \subseteq Q$ be a subset of a $V$-quantale $(Q,\mult, u_Q)$. Then
	$X = (\bigcup_i X_i, \tilde{Q}, J)$ is the $V$-prequantum B-algebra generated by $X$, where:
	\begin{itemize}
		\item $X_0 = X, \ \  X_{i+1} = \{x \rimp y, \ \ z\limp w, \ \ x,y,z,w \in X_i  \} \cup X_i;$
		\item $\tilde{Q}$ is the restriction of the $V$-structure on $Q$ to $\bigcup_i X_i$;
		\item $J(x)= Q(u_Q,x).$
	\end{itemize}
	\end{prop}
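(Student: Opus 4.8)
The plan is to verify the two things a generated object must satisfy: that $(\bigcup_i X_i, \tilde{Q}, J)$ is genuinely a $V$-prequantum B-algebra sitting inside $Q$, and that it is contained in every sub-$V$-prequantum B-algebra of $Q$ containing $X$. Write $S = \bigcup_i X_i$. First I would record that $S$ is closed under the implications of $Q$: if $x,y \in S$ then $x,y \in X_i$ for a common $i$, whence $x \rimp y, x \limp y \in X_{i+1} \subseteq S$. Consequently $S \subseteq Q$, the restricted $V$-structure $\tilde{Q}$ makes $S$ a full sub-$V$-category of $Q$ (the reflexivity and transitivity inequalities are inherited), and $\rimp, \limp$ restrict to $V$-functors $S^{\op} \boxtimes S \to S$. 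Defining $P(x,y,z) = \tilde{Q}(x, y\rimp z)$ and $J(x) = Q(u_Q, x)$ then gives pre-representable data by construction, and the exchange law $\tilde{Q}(x, y\rimp z) = \tilde{Q}(y, x\limp z)$ holds on $S$ because it holds in $Q$, being a purely equational identity that survives restriction verbatim.

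The step I expect to be the real obstacle is checking that the distributor and promonoidal (monoid) axioms still hold after the index sets of the suprema defining composition are cut down from $Q$ to the smaller set $S$. The mechanism that rescues this is that each such supremum is a co-end whose value in $Q$ is already attained by an element of the form $y\rimp z$ or $x\limp z$, and $S$ is closed under exactly these. For the unit axiom, for $x,w \in S$ I would compute
$$\bigvee_{z\in S} J(z)\otimes P(z,x,w) = \bigvee_{z\in S} Q(u_Q,z)\otimes Q(z, x\rimp w),$$
bound it above by the same supremum taken over all of $Q$, which equals $Q(u_Q, x\rimp w) = Q(x,w) = \tilde{Q}(x,w)$ by the $V$-category axioms, and bound it below by the single term $z = x\rimp w \in S$, which already realises $Q(u_Q, x\rimp w)$; hence equality. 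The symmetric unit axiom is identical using $z = x\limp w$ and the exchange law. Associativity is handled the same way: both sides collapse in $Q$ to $Q(a \mult b \mult c, d)$, and the canonical witnesses realising these values, namely $c \rimp d$ on one side and $a \limp d$ on the other, lie in $S$ by closure, so the restricted suprema agree with the full ones.

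Finally, minimality is immediate. Any sub-$V$-prequantum B-algebra $T$ of $Q$ that contains $X$ is by definition closed under $\rimp$ and $\limp$; since $X_0 = X \subseteq T$, an induction gives $X_{i+1} \subseteq T$ whenever $X_i \subseteq T$, so $S \subseteq T$. Thus $(S, \tilde{Q}, J)$ is the smallest $V$-prequantum B-algebra containing $X$, which is the assertion.
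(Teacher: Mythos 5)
Your proof is correct and takes essentially the same route as the paper's, whose entire argument is the two-line remark that it is ``straightforward to verify'' that $\bigcup_i X_i$ is a $V$-prequantum B-algebra and that minimality follows by the same induction $X_i \subseteq T$ you give. What you add is a correct elaboration of the ``straightforward'' part: closure of $S$ under $\rimp,\limp$, and the key observation that the restricted suprema in the unit and associativity axioms still attain the values they have in $Q$ because the canonical witnesses ($x \rimp w$, $x \limp w$, $c \rimp d$, $a \limp d$) lie in $S$.
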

\begin{proof}
	It is straightforward to verify that $\bigcup_i X_i$ is a $V$-prequantum B-algebra; hence that $X \subseteq \bigcup_i X_i$.\\
	For the other inclusion it is sufficient to prove by induction that $X_i \subseteq X$. \\
\end{proof}
\begin{re}
	In the context of Proposition \ref{Gen}, note that $X \rightarrow Q$ is a strict embedding of $V$-quantum B-algebras.
\end{re}
Let $(X,a)$ be in $\Multi{V}$. Consider the Yoneda embedding $\Yo{X} : (X,a) \rightarrow \mathbb{D}_L(X)$. Define $X_{\mathbb{D}_L(X)}$ to be the $V$-prequantum B-algebra generated by $\Yo{X}(X)$ in $\mathbb{D}_L(X)$. By definition, the Yoneda embedding factorizes as
$$ X \rightarrow X_{\mathbb{D}_L(X)} \rightarrow \mathbb{D}_L(X).$$
Let $\tilde{X}$ be an injective hull for $X_{\mathbb{D}_L(X)}$ in $V$-$\mathtt{PQBalg}$. From Proposition \ref{Dense} it follows that $X_{\mathbb{D}_L(X)} \rightarrow \tilde{X}$ is essential (and also dense) in $\Multi{V}$.
Consider the composite
$$(X,a) \rightarrow X_{\mathbb{D}_L(X)} \rightarrow \tilde{X}.$$
\begin{prop}
	The composite $(X,a) \rightarrow X_{\mathbb{D}_L(X)} \rightarrow \tilde{X}$ is essential in $\Multi{V}.$
\end{prop}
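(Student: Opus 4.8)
The plan is to use the factorization $X \xrightarrow{e_1} X_{\mathbb{D}_L(X)} \xrightarrow{e_2} \tilde{X}$ together with the general fact that a composite of essential morphisms is essential. This general fact is immediate: if $e_1,e_2$ are essential and $g\,e_2 e_1$ is fully faithful, then applying essentiality of $e_1$ to $g\,e_2$ gives that $g\,e_2$ is fully faithful, and then essentiality of $e_2$ gives that $g$ is fully faithful (here one uses that fully faithful functors are closed under composition). Since $\tilde{X}$ is an injective hull of $X_{\mathbb{D}_L(X)}$, the leg $e_2$ is dense, hence essential by Proposition \ref{Dense}. Everything therefore reduces to proving that the generating inclusion $e_1 : X \rightarrow X_{\mathbb{D}_L(X)}$ is essential, i.e. that any $(L,V)$-functor $g : X_{\mathbb{D}_L(X)} \rightarrow (Z,b)$ whose restriction to the generators $\Yo{X}(X)$ is fully faithful is fully faithful on all of $X_{\mathbb{D}_L(X)}$.

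One inequality, $\tilde{Q}(\underline{q},w) \leq b(Lg(\underline{q}),g(w))$, is free from $(L,V)$-functoriality, so the content is the reverse inequality. I would exploit that $X_{\mathbb{D}_L(X)}$ sits inside the $V$-quantale $\mathbb{D}_L(X)$, so $\tilde{Q}(\underline{q},w)=\mathbb{D}_L(X)(q_1 \mult \cdots \mult q_n, w)$, and treat the output and the input list separately. For the output, the evaluation relations $\tilde{Q}((x\rimp y, x),y)\geq k$ and their $\limp$-dual hold by construction, hence are preserved by $g$; feeding them into the composition law $b\circ b \leq b$ of $Z$ lets me discharge any implication occurring in $w$ into the input list, reducing to the case $w\in\Yo{X}(X)$. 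The genuinely delicate point is the input list: a complex entry $x\rimp y$ is a limit-type element and cannot be discharged by the same combinatorial move (attempting to move it to the output merely re-creates it), so I expect this to be the main obstacle.

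The resolution I propose is to use the pre-density of the Yoneda embedding: every $\phi\in\mathbb{D}_L(X)$ equals $\Tcolim{L}(\Yo{X},\phi)(1)$, which by the Yoneda lemma yields the identity $\mathbb{D}_L(X)(\phi,c)=\bigwedge_{\underline{d}\in LX}[\,\mathbb{D}_L(X)(\Yo{X}(d_1)\mult\cdots\mult\Yo{X}(d_m),\phi),\,\mathbb{D}_L(X)(\Yo{X}(d_1)\mult\cdots\mult\Yo{X}(d_m),c)\,]$. For each generator list $\underline{d}$ one has the functoriality bound $\tilde{Q}(\underline{d},\phi)\leq b(Lg(\underline{d}),g(\phi))$ together with $b(Lg(\underline{d}),g(\phi))\otimes b(g(\phi),g(c))\leq b(Lg(\underline{d}),g(c))=\tilde{Q}(\underline{d},c)$, the last equality being full faithfulness on $\Yo{X}(X)$; by antitonicity of the internal hom these combine to $b(g(\phi),g(c))\leq[\,\mathbb{D}_L(X)(\Yo{X}(d_1)\mult\cdots\mult\Yo{X}(d_m),\phi),\,\mathbb{D}_L(X)(\Yo{X}(d_1)\mult\cdots\mult\Yo{X}(d_m),c)\,]$, and taking the infimum over $\underline{d}$ collapses, by the displayed identity, to exactly $b(g(\phi),g(c))\leq\mathbb{D}_L(X)(\phi,c)$. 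Iterating this to resolve each input-list entry into generator-products, and combining with the output reduction of the previous paragraph, yields full faithfulness of $g$ in general; hence $e_1$ is essential and, by the composition lemma, so is $e_2 e_1$, which is the claim. The crux is thus that the $(L,V)$-colimit presentation of elements of $\mathbb{D}_L(X)$ is what breaks the deadlock on complex inputs that a purely combinatorial discharge cannot.
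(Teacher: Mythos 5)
Your proposal is correct and takes essentially the same route as the paper: reduce, via the essentiality of the dense leg $X_{\mathbb{D}_L(X)} \rightarrow \tilde{X}$, to proving full faithfulness on the generated $V$-prequantum B-algebra, then resolve complex input entries through the pre-density presentation of elements of $\mathbb{D}_L(X)$ as $(L,V)$-colimits over generator lists, and handle complex outputs by inductively discharging implications into the input list until the output lies in $\Yo{X}(X)$. Your only deviations are local and cosmetic: you keep the codomain $(Z,b)$ general and perform the output discharge with the evaluation inequalities $k \leq \tilde{Q}((x \rimp y, x), y)$ together with $b \circ b \leq b$, where the paper first composes with $\Yo{Z}$ to place itself in a $V$-quantale and phrases the very same step as $f(q \rimp z) \leq f(q) \rimp f(z)$ plus residuation, and you package the final assembly as an explicit composite-of-essentials lemma rather than invoking the hull property of $\tilde{X}$ at the end.
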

\begin{proof}
	Let $f : \tilde{X} \rightarrow (Z,b)$ be such that $\restr{f}{X}$ is fully faithful. By composing with $\Yo{Z}$ we can suppose that $Z$ is a $V$-quantale, and thus that $f$ is a morphism in $(\Vcat^L)_{\mathtt{lax}}$.\\
	If we show that $\restr{f}{X}$ is fully faithful, since $\tilde{X}$ the injective hull of $X$, then it will follow that $f$ is fully faithful too, hence the result.\\
	First we are going to prove that
	$$  Z(Lf(\underline{y}), f(w)) \leq \tilde{a}_{X} ( (y_1, ..., y_n), w), \mbox{ with } y_1,...,y_n \in X \mbox{ and } w = \Yo{X}(x), \mbox{ with } x\in X.$$
	With a calculation similar to the one we did in Proposition \ref{Dense}, by using the properties of $f$ and the fact that $\restr{f}{X}$ is fully faithful, we have (where we consider $\underline{y} = (y_1,y_2)$ as a matter of convenience)
	\begin{alignat*}{2}
	\tilde{a}_{X} ( (y_1, y_2), w ) & =  a_{X} (y_1, y_2 \rimp w) \\
	& \mbox{ (because $X_{\mathbb{D}_L(X)}$ is the $V$-prequantum B-algebra generated by $\Yo{X}(X)$ in $\mathbb{D}_L(X)$)} \\
	& = \mathbb{D}_L(X)[y_1, y_2 \rimp \Yo{X}(x)] \\
	& = \mathbb{D}_L(X)[y_1 \mult y_2, \Yo{X}(x)] \\
	&= \limit(e_1^{\circ} \cdot j_1, \limit(e_1^{\circ} \cdot j_1, a((\underline{x}_1;\underline{x}_2) , x ))) \\
	& = \limit(e_1^{\circ} \cdot j_1, \limit(e_1^{\circ} \cdot j_1, Z (Lf(\underline{x}_1; \underline{x}_2)), f(w)))\\
	&  \geq Z(f(y_1) \mult f(y_2), f(w)).
	\end{alignat*}
	Hence
	$$ Z(Lf(\underline{a}), f(w)) \leq \tilde{a}_{X} ( \underline{y}, w).$$
	If $w\notin X$, by induction, we can suppose it is of the form $w = q \rimp z$ (or $q \limp z$), for $q,z \in \Yo{X}(X).$\\
	From $f(w) \mult f(q) \leq f(w \mult q) $ it follows that $f(w) \leq f(q) \rimp f(z)$; hence that
	\begin{alignat*}{2}
	Z(Lf(\underline{y}), f(w))  & \leq Z(Lf(\underline{y}), f(q) \rimp f(z) ) \\
	& = Z(f(y_1) \mult f(y_2), f(q) \rimp f(z) \\
	&= Z(f(y_1) \mult f(y_2) \mult f(q), f(z)) \\
	& = Z(Lf(\underline{y};q),  f(z) ).
	\end{alignat*}
	Since we supposed $z \in \Yo{X}(X)$, by the case we have already analyzed, it follows that
	$$ Z(Lf(\underline{y};q),  f(z) ) \leq \tilde{a}_{X} (\underline{y};q , z ) = \tilde{a}_{X} ( \underline{y}, q \rimp z ) ,$$
	hence that
	$$Z(Lf(\underline{y}), f(w)) \leq \tilde{a}_{X} ( \underline{y}, w),$$
	for every $w\in X$.\\
\end{proof}
\begin{re}
	Now it should be clear why we have introduced $V$-prequantum B-algebras. If we had considered \textit{vanilla} $V$-quantum B-algebras, in the last proposition we would have faced a problem, since we would not have been able to prove that
	$$Z(f(y_1) \mult f(y_2), f(u)) \leq \tilde{a}_{X} ( (y_1, y_2), u),$$
	where $u$ is the unit of $\mathbb{D}_L(X)$.
\end{re}
\begin{re}
	One might ask why we did not consider $V$-prequantum B-algebras in the first place instead of introducing \textit{vanilla} $V$-quantum B-algebras. The reason for our choice is due to the fact that $V$-prequantum B-algebras do not look as "natural" as $V$-quantum B-algebras do; requiring the representability of only the "multiplicative" part of a promonoidal category is an artifice we employed in order to overcome the minor problem we would have with the unit in the previous proposition.
\end{re}
With the aid of this proposition, we can prove our main theorem.
\begin{teorema}\label{Hulls}
	Let $C$ be a full subcategory of $\Multi{V}$ which contains all $V$-quantales. Then every object $(X,a)$ of $C$ admits an injective hull.
\end{teorema}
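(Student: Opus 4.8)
The plan is to produce, for an arbitrary object $(X,a)$ of $C$, an explicit fully faithful essential morphism into a $V$-quantale, and then to observe that this $V$-quantale already lies in $C$ and is injective there. First I would form the Yoneda embedding $\Yo{X} : (X,a) \to \mathbb{D}_L(X)$ into the presheaf $V$-quantale $\mathbb{D}_L(X)$, which is fully faithful by Remark \ref{MultiYoneda}. Next I would pass to the $V$-prequantum B-algebra $X_{\mathbb{D}_L(X)}$ generated by $\Yo{X}(X)$ inside $\mathbb{D}_L(X)$, as described in Proposition \ref{Gen}, factoring the Yoneda embedding as $(X,a) \to X_{\mathbb{D}_L(X)} \to \mathbb{D}_L(X)$.

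Second, I would invoke the injective-hull construction for $V$-prequantum B-algebras. As noted in the remark following the definition of these objects, the analogues of Proposition \ref{Lax} and Proposition \ref{Strict} remain valid in $V$-$\mathtt{PQBalg}$, so $X_{\mathbb{D}_L(X)}$ admits an injective hull $\tilde{X}$; being of the form $Q^T$ for a lax monoidal monad $T$ on a $V$-quantale $Q$, this $\tilde{X}$ is itself a $V$-quantale. The composite $e : (X,a) \to X_{\mathbb{D}_L(X)} \to \tilde{X}$ is fully faithful, as a composite of a fully faithful corestriction of $\Yo{X}$ with a dense (hence fully faithful) morphism, and it is essential in $\Multi{V}$ by the preceding proposition.

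Finally, I would assemble the pieces. Since $\tilde{X}$ is a $V$-quantale, it is injective in $\Multi{V}$ by the characterization theorem, and it belongs to $C$ precisely because $C$ is assumed to contain all $V$-quantales. Both injectivity and essentiality are inherited by the full subcategory $C$: injectivity because every extension problem posed in $C$ is a fortiori an extension problem in $\Multi{V}$, which $\tilde{X}$ solves; and essentiality because the defining condition for $e$ is tested against fewer codomains in $C$ than in $\Multi{V}$, so it survives restriction. Hence $e : (X,a) \to \tilde{X}$ is a fully faithful essential morphism into a $C$-injective object, i.e.\ an injective hull of $(X,a)$ in $C$.

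The substantive analytic work has already been discharged in the preceding proposition, where the composite was shown to be essential; the genuinely new content here is the role of the standing hypothesis on $C$. I expect the only delicate point to be the transfer of injectivity and essentiality between $\Multi{V}$ and $C$, together with the verification that the hypothesis "$C$ contains all $V$-quantales" is exactly what places the target $\tilde{X}$ inside $C$ and thereby turns an injective hull computed in $\Multi{V}$ into one computed in $C$.
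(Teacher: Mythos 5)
Your proposal is correct and follows essentially the same route as the paper: its proof of Theorem \ref{Hulls} simply combines the proposition immediately preceding it (the essential fully faithful morphism $(X,a)\to X_{\mathbb{D}_L(X)}\to \tilde{X}$ into a $V$-quantale, constructed exactly as you describe via the generated $V$-prequantum B-algebra and the lax monoidal monad) with Theorem \ref{Inj}. Your explicit check that injectivity and essentiality descend from $\Multi{V}$ to the full subcategory $C$ is precisely the routine step the paper leaves implicit.
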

\begin{proof}
	From Theorem \ref{Inj} we get that the injectives in $C$ are $V$-quantales. From the previous theorem, for every object of $C$ there exists an essential embedding of it into a $V$-quantale. Hence the result. \\
\end{proof}
In particular this theorem applies to all the categories displayed in the following diagram
\[
\begin{tikzcd}
 & \Multi{V} &  \\
 & \Pro \arrow[u] & \\
 \mathtt{Mon}(V \mbox{-}\mathtt{Cat})_{\mathtt{lax}}  \arrow[ur] & & V \mbox{-}\mathtt{QBalg} \arrow[ul] \\
 & \mathtt{ResMon}(V \mbox{-}\mathtt{Cat})_{\mathtt{lax}}. \arrow[ur] \arrow[ul]
\end{tikzcd}
\]
\begin{re}
	Similarly to what we described in Remark \ref{Discussion}, when $V= \mathbf{2}$, Theorem \ref{Hulls} allows us to recover Theorem $5.8$ of \cite{InjHulls} and Theorem $1$ of \cite{2016THECO}. We stress again the important role played by ordered multicategories (and of $(L,V)$-categories in the general case) as the common roof in which all categorical constructions are made. Indeed, the crucial step is not only to recognize that both ordered monoids and ordered quantum B-algebras are examples of promonoidal ordered sets, but, mainly, it is to recognize that promonoidal ordered sets are examples of ordered multicategories. This is because ordered multicategories provide the right notion of colimit we need in order to deal with both the monoidal structure order monoids posses and the implicational quantum B-algebras have, as we mentioned in Remark \ref{Discussion2}.
\end{re}
\section{Ending Remarks}
In this last section we briefly sketch a connection between Isbell adjunction, as presented in Section \ref{Intermezzo}, and the construction of injective hulls in the category of topological spaces, as given in \cite{Ban73}. As noticed by \cite{Bar70}, topological spaces can also be seen as examples of ``generalized multicategories'' for the ultrafilter monad. The key difference between multicategories and topological spaces is that, due to the fact that the ultrafilter functor $U$ satisfies $U(1) = 1$, in the latter case it is possible to define the analogue of the covariant presheaf category (see \cite{Hof14}). This allows us to define the analogue of the Isbell adjunction which, unfortunately, does not exist for every topological space. The main result of this brief section is that, given a topological space $X$, if the Isbell adjunction exists then $X$ admits an injective hull. The deep reason why topological spaces and multicategories behave differently, although they are both examples of generalized multicategories, is an interesting open question the author wants to investigate; the hope is to provide a more general theory of injective hulls for $(T,V)$-categories.\par\medskip

Injective topological spaces are characterized in \cite{Sco72} as precisely the continuous
lattices, and in \cite{HOFMANN2011283} it is observed that injective topological spaces are
those spaces where the (topological analogue of the) Yoneda embedding has a left
adjoint in the ordered category of topological spaces and continuous maps. Here
one considers the space
\begin{displaymath}
  \mathbf{2}^{(UX)^{\mathrm{op}}}
\end{displaymath}
where \(\mathbf{2}\) is the Sierpiński space and \(UX\) denotes the set of all
ultrafilters on \(X\) which, with a certain topology, becomes the space
\((UX)^{\mathrm{op}}\). The Yoneda embedding \(\Yo{X} \colon X\to \mathbf{2}^{(UX)^{\mathrm{op}}}\)
sends a point \(x\in X\) to the set \(\{\fx\in UX\mid \fx\to x\}\) of all
ultrafilters convergent to \(x\). In \cite[Example~4.10]{HT10} it is observed
that
\begin{displaymath}
  \mathbf{2}^{(UX)^{\mathrm{op}}} \longrightarrow FX,\quad
  \mathcal{A} \longmapsto\bigcap\mathcal{A}
\end{displaymath}
is an isomorphism; here \(FX=\{\text{filters of open subsets of }X\}\) with the
sets
\begin{displaymath}
  U^{\#}=\{\ff\in FX\mid U\in\ff\},\qquad(U\text{ open in  }X)
\end{displaymath}
forming a basis for the topology of \(FX\) (for instance, see \cite{Esc97}). On
the other hand, the topological analogue to the covariant presheaf category
(see \cite{Hof14}) is the lower Vietoris space
\begin{displaymath}
  VX=\{A\subseteq X\mid A\text{ closed}\};
\end{displaymath}
here the topology is generated by the sets
\begin{displaymath}
  B^{\Diamond}=\{A\in VX\mid A\cap B\neq\varnothing\},\qquad (B\text{ open in }X).
\end{displaymath}

While injective hulls for topological spaces are described in \cite{Ban73}, we point
out here that, similarly to the situation for \(V\)-categories, this description
is ultimately linked to the \emph{Isbell adjunction}. For a topological space
\(X\), the Isbell adjunction is given by the monotone maps
\begin{displaymath}
  \begin{tikzcd}[column sep=huge]
    FX %
    \ar[shift left, start anchor=east, end anchor=west, bend left=25,
    ""{name=U,below}]%
    {r}{(-)^{+}} %
    & VX %
    \ar[start anchor=west,end anchor=east,shift left,bend left=25,
    ""{name=D,above}] %
    {l}{(-)^{-}} \ar[from=U,to=D,"\bot" description]
  \end{tikzcd}
\end{displaymath}
where
\begin{displaymath}
  A^{-}=\bigcap\{\fx\in UX\mid \fx\to x\in A\}
  \quad\text{and}\quad
  \ff^{+}=\lim\ff,
\end{displaymath}
for all closed subsets \(A \subseteq X\) and all filters of open subsets
\(\ff\). Note that \((-)^{-} \colon VX\to FX\) is continuous but
\((-)^{+}\colon FX\to VX\) is in general not. We also recall from \cite{Hof13a}
the following definition.
\begin{defin}
  A topological space \(X\) is \(F\)-core-compact whenever, for all \(x\in X\)
  and all open neighbourhoods \(U\) of \(x\), there exists an open neighbourhood
  \(V\) of \(x\) so that \(V\ll_{F} U\). Here \(V\ll_{F} U\) whenever, for all
  filters \(\ff\) of opens with \(V\in\ff\), \(\lim\ff\cap U\neq\varnothing\).
\end{defin}

\begin{re}
  For open subsets \(U,V\) of \(X\), \(V\ll_{F} U\) if and only if there exists
  some \(x\in X\) with \(V\subseteq \downc x\subseteq U\). We also note that
  \(F\)-core-compact spaces were introduced in \cite{Ern91} under the name
  \emph{C-spaces}. Moreover, in \cite[Proposition~3]{Ban73} it is shown that every \(C\)-space
	admits an injective hull. In fact, \cite[Proposition~3]{Ban73} claims that
	these are exactly the topological spaces which admit an injective hull;
	however, \cite{HM82} (see also \cite{Hof85}) presents a counter example to this
	claim.

\end{re}

\begin{prop}
  A topological space \(X\) is \(F\)-core-compact if and only if the map
  \((-)^{+}\colon FX\to VX\) is continuous.
\end{prop}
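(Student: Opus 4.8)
The plan is to translate both sides of the claimed equivalence into statements about the (sub)basic open sets and then match them directly, so that the proposition becomes a dictionary between the topologies on $FX$ and $VX$ on the one hand and the relation $\ll_{F}$ on the other. Since the topology on $VX$ is generated by the sets $B^{\Diamond}$ (with $B$ open), continuity of $(-)^{+}$ is equivalent to openness of the preimage of each such $B^{\Diamond}$. So first I would fix an open $B\subseteq X$ and compute
$$ ((-)^{+})^{-1}(B^{\Diamond}) = \{\ff \in FX \mid \lim\ff \cap B \neq \varnothing\} =: W_B, $$
reducing the whole statement to: $X$ is $F$-core-compact iff $W_B$ is open in $FX$ for every open $B$.

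The single observation that drives the proof is the following. Because the sets $U^{\#}$ (with $U$ open) form a basis for $FX$, a subset $W\subseteq FX$ is open exactly when each $\ff\in W$ admits an open $U\in\ff$ with $U^{\#}\subseteq W$. Unwinding the definitions of $U^{\#}$ and of $W_B$, the inclusion $U^{\#}\subseteq W_B$ says precisely that every filter of opens containing $U$ has a limit meeting $B$ — that is, exactly the relation $U\ll_{F} B$. Hence $W_B$ is open iff, for every $\ff$ with $\lim\ff\cap B\neq\varnothing$, there is an open $U\in\ff$ with $U\ll_{F} B$. Everything now follows by choosing the right witnesses.

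For the direction continuity $\Rightarrow$ $F$-core-compact, I would fix $x\in X$ and an open neighbourhood $U$ of $x$, and test continuity on the filter $\dot x$ of open neighbourhoods of $x$: one has $x\in\lim\dot x$ and $x\in U$, so $\dot x\in W_U$, and openness of $W_U$ delivers an open $V\in\dot x$ (an open neighbourhood of $x$) with $U^{\#}$-style inclusion $V^{\#}\subseteq W_U$, i.e. $V\ll_{F} U$; this is $F$-core-compactness. For the converse, given $\ff\in W_B$ I would pick $x\in\lim\ff\cap B$, note that $B$ is then an open neighbourhood of $x$, and apply $F$-core-compactness to obtain an open neighbourhood $V$ of $x$ with $V\ll_{F} B$. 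Since $x\in\lim\ff$ forces $\ff$ to contain every open neighbourhood of $x$, we get $V\in\ff$, whence $\ff\in V^{\#}\subseteq W_B$; as $\ff\in W_B$ was arbitrary, $W_B$ is open.

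I do not expect a serious obstacle: the argument is essentially the translation described above. The only points that need a little care are the bookkeeping around limit points — namely that $x\in\lim\ff$ is equivalent to $\ff$ refining the open-neighbourhood filter $\dot x$, and that $\lim\ff$ is closed so that $\ff^{+}$ genuinely lands in $VX$ — together with the choice of the test filter $\dot x$ in the forward direction, which is what lets one pass from an arbitrary member of $W_U$ to a point $x$ realizing the neighbourhood and thereby recover the pointwise condition defining $F$-core-compactness.
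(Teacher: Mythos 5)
Your proof is correct and follows essentially the same route as the paper's: both directions use the same witnesses (testing continuity on the open-neighbourhood filter of \(x\) for one implication, and picking \(x\in\lim\ff\cap B\) and applying \(F\)-core-compactness at \(x\) for the other). Your explicit dictionary \(U^{\#}\subseteq W_{B}\iff U\ll_{F}B\) is just a tidy repackaging of the pointwise argument the paper carries out directly, and incidentally makes clearer what the paper's proof states with a typo (\(U\ll_{F}V\) where \(U\ll_{F}B\) is meant).
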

\begin{proof}
  Assume first that \(X\) is \(F\)-core-compact. Let \(B\subseteq X\) be open
  and \(\ff\in FX\) with \(\lim\ff\in B^{\Diamond}\), that is
  \(\lim\ff\cap B\neq\varnothing\). Let \(x\in \lim\ff\cap B\). By hypothesis,
  there is an open neighbourhood \(U\) of \(x\) with \(U\ll_{F}V\). Then
  \(\ff\in U^{\#}\) and, for every \(\fg\in U^{\#}\),
  \(\lim\fg\in B^{\Diamond}\).

  Assume now that \((-)^{+}\colon FX\to VX\) is continuous. Let \(x\in X\) and
  let \(B\) be an open neighbourhood of \(x\). Then, with \(\ff\) being the open
  neighbourhood filter of \(x\), \(\ff\to x\in B\), hence
  \(\lim\ff\in B^{\Diamond}\). Since \((-)^{+}\) is continuous, there is some
  open neighbourhood \(U\) of \(x\) so that, for all \(\fg\in U^{\#}\),
  \(\lim\fg\in B^{\Diamond}\).
\end{proof}

With this notation, we can reformulate \cite[Proposition~3]{Ban73}.

\begin{teorema}
	A topological space \(X\) has an injective hull provided that \((-)^{+}\colon
	  FX\to VX\) is continuous.
\end{teorema}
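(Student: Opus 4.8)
The plan is to run, inside \(\mathtt{Top}\), exactly the Isbell-adjunction argument of the \emph{Intermezzo} (Section~\ref{Intermezzo}), reading \(FX\) as the presheaf object (the topological avatar of \(\mathbb{D}(X)\)), \(VX\) as the copresheaf object, and the pair \((-)^{+}\dashv(-)^{-}\) as the Isbell adjunction. Since \((-)^{-}\colon VX\to FX\) is always continuous and, by hypothesis, \((-)^{+}\colon FX\to VX\) is continuous as well, the composite
$$
T=(-)^{-}\circ(-)^{+}\colon FX\longrightarrow FX
$$
is a continuous map; the adjunction \((-)^{+}\dashv(-)^{-}\) makes it an idempotent monad (a closure operator for the specialization order on \(FX\)), precisely the topological counterpart of the Isbell monad used in the \emph{Intermezzo} to build \(\mathbb{D}(X)^{T}\). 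This is the one place where the hypothesis is indispensable: without continuity of \((-)^{+}\) the assignment \(T\) is not a morphism of \(\mathtt{Top}\), and the construction below cannot even be phrased internally to topological spaces.

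First I would form the space of fixed points \(FX^{T}=\{\ff\in FX\mid T\ff=\ff\}\) with the subspace topology. As in the ordered case, the corestriction \(\pi_{T}\colon FX\to FX^{T}\) of \(T\) is a continuous retraction whose section is the inclusion, so \(FX^{T}\) is a retract of \(FX\) in \(\mathtt{Top}\). Now \(FX\cong\mathbf{2}^{(UX)^{\op}}\) is injective --- it is the free injective space on \(X\), being the value of the presheaf construction whose algebras are, by the characterization recalled above (equivalently Theorem~\ref{KZ} read in the \((T,V)\)-setting with \(T=U\), \(V=\mathbf{2}\)), exactly the injective spaces, i.e. those on which the Yoneda embedding has a left adjoint. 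Since injectivity passes to retracts, \(FX^{T}\) is injective.

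Next I would produce the essential embedding \(e\colon X\to FX^{T}\) as the reflection of the Yoneda map, \(e=\pi_{T}\circ\Yo{X}\), where \(\Yo{X}(x)\) is the open neighbourhood filter of \(x\). For \(T_{0}\) spaces \(\Yo{X}\) is an embedding, and passing to the \(T_{0}\)-reflection (which alters neither injective hulls nor the hypothesis) one may assume this; that \(e\) is again an embedding is then the routine part. The heart of the matter is \emph{essentialness}, and here I would mirror Theorem~\ref{Ess}: by the topological analogue of denseness (Definition~\ref{DeV}), every fixed point \(\ff\in FX^{T}\) is recoverable from the points of \(X\) as
$$
\ff=(\ff^{+})^{-}=\bigwedge_{x\in\ff^{+}}\Yo{X}(x),
$$
that is, as the neighbourhood filter (a \(\limit\) of representables) of its closed set of limits \(\ff^{+}=\lim\ff\), which is in turn the join in \(VX\) (a \(\colim\) of co-representables) of the point closures \(\overline{\{x\}}\) it contains. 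Feeding this decomposition into the computation from the \(\Leftarrow\) direction of Theorem~\ref{Ess} --- with the \(V\)-enriched \(\limit\) and \(\colim\) replaced by the limits and colimits computed through the Isbell adjunction --- forces any \(g\colon FX^{T}\to Z\) whose restriction to \(X\) is fully faithful to be fully faithful on all of \(FX^{T}\); hence \(e\) is essential, and \(e\colon X\to FX^{T}\) is an injective hull.

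The main obstacle is this density/essentialness step, which is genuinely more delicate than in \(\Vcat\). There the Isbell adjunction exists for every object and the specialization order is, in effect, self-dual, so \(\Yo{X}(x)\) sits among the fixed points essentially on the nose and Theorem~\ref{Ess} transfers verbatim. Here neither is automatic: the Isbell adjunction exists only under the standing hypothesis, and the specialization order on \(FX\) is \emph{not} self-dual, so one must track carefully the direction in which \(\Yo{X}(x)\) and its reflection \(T\,\Yo{X}(x)\) lie --- exactly the asymmetry the excerpt flags as the deep difference between topological spaces and multicategories. Once the decomposition \(\ff=(\ff^{+})^{-}\) is in place and the hypothesis is used to keep \(T\) inside \(\mathtt{Top}\), the retract argument yields injectivity of \(FX^{T}\) and the density argument yields essentialness of \(e\), giving the desired injective hull.
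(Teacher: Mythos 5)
Your construction is, in outline, exactly the object the paper gestures at: the fixed points of $T=(-)^{-}\circ(-)^{+}$ are Banaschewski's $\lambda X$ (the joins of neighbourhood filters), and you place the hypothesis correctly --- continuity of $(-)^{+}$ is precisely what makes $T$ a morphism of $\mathtt{Top}$, so that $FX^{T}$ becomes a topological retract of the injective space $FX\simeq\mathbf{2}^{(UX)^{\mathrm{op}}}$, and retracts of injectives are injective. That half of your argument is sound. Be aware, though, that the paper does not prove the theorem this way at all: its ``proof'' consists of the immediately preceding proposition (continuity of $(-)^{+}$ is equivalent to $F$-core-compactness, i.e.\ to $X$ being a $C$-space in Erné's sense) together with a citation of Banaschewski's Proposition~3, which asserts that every $C$-space admits an injective hull. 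So what you propose is a self-contained reconstruction of the cited result, not a rephrasing of the paper's argument --- and that raises the bar on the one step you do not carry out.

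The genuine gap is essentialness of $e\colon X\to FX^{T}$. You assert that feeding the decomposition $\ff=(\ff^{+})^{-}$ into ``the computation from Theorem~\ref{Ess}'' forces any $g$ that is fully faithful on $X$ to be fully faithful on $FX^{T}$, but Theorem~\ref{Ess} is a statement about $V$-categories, and its proof uses exactly the machinery whose topological analogue is in question: an Isbell adjunction available for \emph{every} object, and the two-sided calculus of weighted limits and colimits with their lax-preservation inequalities. In $\mathtt{Top}$, fully faithfulness of $g$ is a condition on ultrafilter convergence on $FX^{T}$, not merely on the specialization order in which your decomposition $\ff=\bigwedge_{x\in\ff^{+}}\Yo{X}(x)$ lives, and the paper explicitly flags that this transfer is not automatic (it is the stated open question about hulls of general $(T,V)$-categories). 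The missing content is precisely Banaschewski's Proposition~2 --- that $X\to\lambda X$ is an essential extension, proved by a concrete argument with filters of opens --- which your proposal replaces by a gesture. Two smaller defects: essentialness corresponds to the $\Rightarrow$ (dense implies essential) direction of Theorem~\ref{Ess}, not the $\Leftarrow$ direction you invoke; and the claim that the $T_{0}$-reflection ``alters neither injective hulls nor the hypothesis'' needs justification, since the reflection map is not an embedding --- alternatively, take the class $J$ to be the initial continuous maps, the topological incarnation of the paper's fully faithful morphisms, and the separation issue dissolves without that detour.
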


In other words, \(X\) has an injective hull if the Isbell adjunction exists in
the category of topological spaces and continuous maps. In this case,
the injective hull of a space \(X\) is given by the embedding of \(X\) into the
subspace \(\lambda X\) of \(FX\) given by all joins of neighbourhood filters
(see \cite[Proposition~2]{Ban73}). Similarly to the Dedekind-MacNeille completion, these filters are
precisely the fixed points of the Isbell adjunction. To see this, consider
\(\mathbf{2}^{(UX)^{\mathrm{op}}}\simeq FX\) and, for \(\psi\in \mathbf{2}^{(UX)^{\mathrm{op}}}\),
note that
\begin{displaymath}
  (\psi^{-})^{+}=\bigcap\{y_{X}(x)\mid x\in\psi^{-}\}.
\end{displaymath}
Therefore, for \(\ff\in FX\), \(\ff=(\ff^{-})^{+}\) precisely when \(\ff\) is a
join of neighbourhood filters.\\


\section*{Acknowledgements}

I am grateful to D. Hofmann for valuable discussions about the content of the paper and to I. Stubbe for the valuable suggestions he gave me during his visit to Aveiro. The author would like to thank the anonymous referee who kindly reviewed the earlier version of this manuscript and provided valuable suggestions and comments which lead to an overall improvement of the paper. \\
The author acknowledges partial financial assistance by the ERDF – European Regional Development Fund through the Operational Programme for Competitiveness and Internationalisation - COMPETE 2020 Programme and by National Funds through the Portuguese funding agency, FCT - Fundação para a Ciência e a Tecnologia, within project POCI-01-0145-FEDER-030947, and project UID/MAT/04106/2019 (CIDMA). The author is also supported by FCT grant PD/BD/128187/2016.

\end{document}